\documentclass[a4paper,11 pt]{amsart}
\usepackage{amsfonts}
\usepackage{amssymb}
\usepackage[utf8]{inputenc}
\usepackage{amsmath}
\usepackage{pdflscape}
\usepackage{graphicx}
\setcounter{MaxMatrixCols}{30}
%TCIDATA{OutputFilter=latex2.dll}
%TCIDATA{Version=5.50.0.2890}
%TCIDATA{LastRevised=Monday, December 02, 2013 11:27:14}
%TCIDATA{<META NAME="GraphicsSave" CONTENT="32">}
%TCIDATA{<META NAME="SaveForMode" CONTENT="1">}
%TCIDATA{BibliographyScheme=Manual}
%BeginMSIPreambleData
%\providecommand{\U}[1]{\protect\rule{.1in}{.1in}}
%EndMSIPreambleData
\vfuzz2pt
\hfuzz2pt
\usepackage[colorlinks=true, linkcolor=red, citecolor=blue]{hyperref}
\usepackage[]{epsfig}
\usepackage[]{pstricks}
\usepackage{tikz}

\numberwithin{equation}{section}
\newtheorem{theorem}{Theorem}[section]
\newtheorem{proposition}[theorem]{Proposition}

\newtheorem{remark}{Remark}[section]
\newtheorem{claim}{Claim}
\newtheorem{definition}{Definition}
\newtheorem{remarks}{Remarks}

\newcommand{\N}{\mbox{$I \kern -4pt N$}}

\newcommand{\Q}{\mbox{$Q \kern -8pt I$}}
\newcommand{\R}{\mbox{$I \kern -4pt R$}}
\newcommand{\C}{\mbox{$C \kern -8pt I$}}

\newcommand{
  {\resizebox{}{!}{\input .pstex_t}}
}[2]{
  {\resizebox{#1}{!}{\input #2.pstex_t}}
}

\def\R{{\bf R}}
\setlength{\oddsidemargin}{ 0.0 in} \setlength{\parindent}{ 24pt}
\setlength{\evensidemargin}{ 0.0 in} \setlength{\parindent}{ 24pt}
\setlength{\textheight} {9.5 in} \setlength{\textwidth}{ 6.5 in}
\setlength{\topmargin}{ -.4 in}

\numberwithin{equation}{section}
\makeatletter
\@namedef{subjclassname@2020}{\textup{2020} Mathematics Subject Classification}
\makeatother

\begin{document}
\title[Insensitizing controls for 4NLS]{Controls insensitizing the norm of solution of a Schrödinger type system with mixed dispersion}
\author[Capistrano--Filho]{Roberto de A. Capistrano--Filho*}\thanks{*Corresponding author: roberto.capistranofilho@ufpe.br}
\address{Departamento de Matem\'atica,  Universidade Federal de Pernambuco (UFPE), 50740-545, Recife (PE), Brazil.}
\email{roberto.capistranofilho@ufpe.br}
\author[Tanaka]{Thiago Yukio Tanaka}
\address{Departamento de Matemática, Universidade Federal Rural de Pernambuco (UFRPE), 52171-900, Recife (PE), Brazil.}
\email{thiago.tanaka@ufrpe.br}
\thanks{Capistrano–Filho was partially supported by CAPES/MATH-AMSUD \#8881.520205/2020-01, CAPES/PRINT \#88887.311962/2018-00, CAPES/COFECUB \#88887.879175/2023-00, CNPq, Brazil, \#421573/2023-6, \#307808/2021-1 and \#401003/2022-1 and Propesqi (UFPE)}
\subjclass[2020]{35Q55, 93C20, 93B05, 93B07, 93C41}
\keywords{Fourth-order Schrödinger system, Carleman estimates, Null controllability,  Insensitizing controls}

\begin{abstract}
The main goal of this manuscript is to prove the existence of insensitizing controls for the fourth-order dispersive nonlinear Schrödinger equation with cubic nonlinearity.  To obtain the main result we prove a null controllability property for a coupled fourth-order Schrödinger cascade type system with zero-order coupling which is equivalent to the insensitizing control problem. Precisely, employing a new Carleman estimate, we first obtain a null controllability result for the linearized system around zero, and then the null controllability for the nonlinear case is extended using an inverse mapping theorem.
\end{abstract}
\maketitle
%%%%%%%%%%%%%%%%%%%%%%%%%%%%%%%%%%%%%%%%%%%%
%%% NEW SECTION
%%%%%%%%%%%%%%%%%%%%%%%%%%%%%%%%%%%%%%%%%%%%

\section{Introduction}

\subsection{Setting of the problem} Inspired by the difficulties in finding data in distributed system applications, Lions \cite{lions1990quelques} introduced the topic of insensitizing controls. Precisely, this kind of problem deals with the existence of controls making a functional (depending on the solution) insensible to small perturbations of the initial data. Considering some particular functional, it has been proven that this problem is equivalent to control properties of cascade systems \cite{bodart1995controls,guerrero2007null}.

The insensitivity can be defined in two different ways: An approximate problem or an exact problem. Approximate insensitivity is equivalent to the approximate controllability of the cascade system, while exact insensitivity is equivalent to its null controllability. Before giving the reader more details about it and a state-of-the-art related to these problems, let us introduce the model we want to study.   

In this article, we address the exact insensitizing problem for the cubic fourth-order Schrödinger equation with mixed dispersion, the so-called fourth-order nonlinear Schrödinger system (4NLS)
\begin{equation}
\label{fourtha}
iu_t +u_{xx}-u_{xxxx}=\lambda |u|^2u,
\end{equation}
where $x, t \in\mathbb{R}$ and $u(x,t)$ is a complex-valued function. Equation \eqref{fourtha} has been introduced by Karpman \cite{Karpman} and Karpman and Shagalov \cite{KarSha} to take into account the role of small fourth-order dispersion terms in the propagation of intense laser beams in a bulk medium with Kerr nonlinearity. Equation \eqref{fourtha} arises in many scientific fields such as quantum mechanics, nonlinear optics, and plasma physics, and has been intensively studied with fruitful references (see \cite{Ben,Karpman,Paus1} and references therein).

To introduce our problem, consider $\Omega : = (0,L) \subset \mathbb{R}$ be an interval and assume that $T>0$. We will use the following notations $Q_T=\Omega\times(0,T)$, $\Sigma=\partial\Omega \times(0,T)$ and  $\omega_T=\omega\times(0,T)$, where $\omega\subset\Omega$ is the so-called \textit{control domain}. Let us consider  the system
\begin{equation}\label{biharmonic}
\left\{
\begin{array}{lll}
      i u_t  + u_{xx} - u_{xxxx} - \zeta |u|^2 u = f + 1_{\omega}h,  &\text{in}& Q_T,\\
      u(t,0) = u(t,L) = u_x(t,0) = u_x(t,L) = 0, &\text{on}& t \in (0,T), \\
      u(0,x)=u_0(x) +\tau \hat{u}_0(x), &\text{in}&\Omega,
\end{array}
\right.
   \end{equation}
where $\zeta\in\mathbb{R}$, $\tau$ is an unknown (small) real number, $h$ stands for the control, $u$ is the state function and $\hat{u}_0(x)$ is an unknown function. 

Let $J:\mathbb{R}\times L^2(q_T)\rightarrow \mathbb{R}$ be a functional (called {\it sentinel functional}) defined by
\begin{equation}\label{def:jfunc}
J(\tau, h) := \frac{1}{2} \iint_{\mathcal{O}_T}| u(x,t;\tau,h)|^{2}dxdt,
\end{equation}
	where $u = u(x,t;\tau,h)$ is the corresponding solution of \eqref{biharmonic} associated to $\tau$, $h$ is the control function and $\mathcal{O}_T=\mathcal{O}\times(0,T)$, where $\mathcal{O}$ is the so-called \textit{observation domain}.  Thus, our objective can be expressed in the definition below.

\begin{definition}[Insensitizing controls]\label{def:insen}
	Let $u_0\in L^2(\Omega)$ and $f\in L^2(Q_T)$. We say that a control $h$ insensitizes the functional $J$, associated with the solution $u(x,t;\tau,h)$ of \eqref{biharmonic}, if
\begin{equation}\label{cond:insens}
	\frac{\partial}{\partial \tau} J(\tau,h) \bigg|_{\tau = 0}  = 0, 
	\ \ \forall \ \widehat{u}_{0} \in L^{2}(\Omega)\ \ 
	\hbox{with} \ \  \|\widehat{u}_{0}\|_{L^{2}(\Omega)} = 1.
\end{equation}
\end{definition}

The definition \eqref{def:jfunc} above can be seen as this: the sentinel does not detect (small) variations of the initial data $u_0$ provoked by the unknown (small) perturbation $\tau \hat u_0$ in the observation domain $\mathcal{O}$ when the system evolves from a time $t=0$ to a time $t = T$.

It is important to point out that in \cite{lissyprivatsimpore2018} the authors discuss the motivation for insensitizing controls considering linear and semilinear heat equations with partially unknown domains and state that the existence of such controls is important for maintaining the stability of the solutions and robustness against external variations and uncertainties. They suggest that it can ensure the system evolution is unaffected by parameter perturbations, guaranteeing predictable and stable operation, particularly in industrial and environmental contexts where conditions may vary.

Based on this motivation, we can also consider the general application of insensitizing controls to the fourth order Schrödinger equation similarly since the existence of such controls aims to ensure robustness against perturbations, precision in quantum experiments, and stability in open systems. They may ensure that quantum systems maintain their desired properties and operate predictably and stably, even amidst uncertainties and noise. Small variations in potentials or boundary conditions can cause significant changes in system behavior in quantum systems like atoms or molecules. In open systems interacting with external environments, smoothing noises and decoherence effects preserve quantum state coherence. Quantum technologies, such as sensors and atomic clocks, are extremely sensitive to external variations; thus, maintaining the precision and reliability of these devices enhances their performance and applicability across various technological fields.

Since the fourth-order Schrödinger equation incorporates phenomena with dispersion effects and complex interactions, these controls can ensure precision and stability in materials with strong spin-orbit interactions, exotic quantum systems, or advanced dispersion phenomena. It may be applied in open systems to mitigate noise and decoherence effects, ensuring precision in sensitive quantum technologies. Therefore, they are crucial for the robustness and accuracy of complex quantum systems. Now, before presenting the results of our work, let us give some previous results concerning the insensitizing control problems.

\subsection{Insensitizing control problems for PDEs} As we have mentioned, the first time that insensitizing problem was approached was in the early '90s by Lions \cite{lions1990quelques,lions1992sentinelles}, where the author studied second and fourth-order parabolic equations in limited domains considering a functional with the local $L^2-$norm of the solution of a system with null initial data and where the control domain (located internally) intersect the observation domain (the set where we want to analyze the functional). 

Since then, variations of this problem have been considered, i.e., to find controls that turn a functional depending on the solution (or some derivative) insensitive to small perturbations depending on the initial data. We will give a brief state of the art to the reader, precisely, we will present a sample of the insensitizing problems for partial differential equations (PDEs) and some control results to the system \eqref{fourtha}.

The first mathematical results concerned the insensitivity of the $L^2-$norm of the solution restricted to a subdomain, called the observatory. In \cite{teresa2000insensitizing}, the author proves that insensitizing control problems cannot be solved for every initial data. Additionally, de Tereza \cite{de2009identification} used a global Carleman estimate approach to get the existence of exact insensitizing controls for a semilinear heat equation. Still, concerning the semilinear heat equation, Bodart \textit{et al.}  \cite{bodart1995controls,bodart2004existence} proved the existence of insensitizing controls for this system with nonlinear boundary Fourier conditions. 

Concerning the variations of the sentinel functional, in \cite{guerrero2007null}, the author considers a functional involving the gradient of the state for a linear heat system, in the same way, Guerrero \cite{guerrero2007controllability}  treated the case of the sentinel with the curl of the solution of a Stokes system.  About the wave equation, Alabau-Boussouira \cite{Alabau-Boussouira}, showed the exact controllability, by a reduced number of controls, of coupled cascade systems of PDE’s and the existence of exact insensitizing controls for the scalar wave equation. She gave a necessary and sufficient condition for the observability of abstract-coupled cascade hyperbolic systems by a single observation, the observation operator being either bounded or unbounded.

A variation of the (exact) control strategy was presented in \cite{bodart1995controls}, where the authors considered an approximated insensitizing problem (called $\epsilon-$insensitizing control) for a nonhomogeneous heat equation. We observe that by smoothing the control strategy it was possible to prove in \cite{micuortegateresa2004} positive results where the control domain and observation region do not intersect each other. Also in \cite{teresa1997} the author proved insensitizing control results on unbounded domains, in \cite{lissyprivatsimpore2018} the authors treated insensitizing controls for both linear and semilinear heat equation but with a partially unknown domain, finally see \cite{mumingyingao2019} for the semilinear parabolic equation with dynamic boundary conditions. It is important to point out that in \cite{tanaka2019} the second author also treated it with a gradient-type sentinel associated with the solutions of a nonlinear Ginzburg-Landau equation.

Concerning the structure/type of the equations/systems, many variations were considered. In \cite{carreno2017insensitizing} the author treated insensitizing controls for the Boussinesq systems and in \cite{calsavaracarrenocerpa2016} the authors proved a result for a phase field system. In \cite{gao2014} it is considered a Cahn-Hilliard equation of fourth order with superlinear nonlinearity and in \cite{enriquegarciaaxel2003} the authors proved insensitizing (exact and approximated) controls for a large-scale ocean circulation model.

To finalize this small sample of the state of the art, we cite Bodart \textit{et al.} \cite{bodart2004existence,bodart2004insensitizing} that studied systems with nonlinearities with certain superlinear growth and nonlinear terms depending on the state and its gradient. For a dispersive problem, we can cite Kumar and Chong \cite{kumarkil2017} which worked with the KdV-Burgers equation.  Finally, let us mention a recent work due to Lopez-Garcia \textit{et al.} \cite{de_tereza_lopes_mercado}. In this work, the authors presented a control problem for a cascade system of two linear $N-$dimensional Schrödinger equations. They address the problem of null controllability using a control supported in a region not satisfying the classical geometrical control condition. The proof is based on the application of a Carleman estimate with degenerate weights to each one of the equations and a careful analysis of the system to prove null controllability with only one control force.

%\textcolor{blue}{\textbf{TANAKA: Aqui eu procurei nos artigos de insensibilizante se algum deles teria uma motivação para esse estudo e só encontrei esse [30]. Em resumo, controles insensibilizantes podem preservar a energia ou estado em alguma localidade, uma vez que estamos derivando o funcional e igualando a zero. Assim, baseado neste artigo [30], pedi para o chatgpt sistematizar o que a equação de Schrdinger de quarta ordem modelava e se havia quantidades que poderiam ser insensibilizadas. Basicamente ela colocou contextos em que poderiam ser mantidos alguns parâmetros invariáveis mediante a um tal controle. Verificar se é plausível o texto que segue, que foi feito todo por mim, mas inspirado no que obtive como resposta.}} 

We caution that the literature is vast and one can see the references cited previously for the existence of the insensitizing controls for other types of PDEs. 

%With the sample in hand, let us present our main results in this manuscript.

\subsection{Main results}
 The main goal of this paper is to close the gap that was missing when discussing the insensitizing control for the Schrödinger type equation. Here, we present the insensitizing control for the Schrödinger type equation with mixed dispersion. Precisely,  we are interested in proving the existence of a control $h$ which insensitizes the functional $J$ defined by \eqref{def:jfunc}.  The first result of this article can be read as follows.
\begin{theorem}\label{thm:insen} 
	Assume that $\omega \cap \mathcal{O} \neq \emptyset$ and $u_{0} \equiv 0$. There exists a constant $C>0$  and $\delta>0$ such that for any $f$ satisfying
	$$\|e^{C/t}f\|_{L^2(Q_T)}\leq \delta,$$
	one can find a control $h(x,t)=:h\in L^2(q_T)$ which insensitizes the functional $J$ defined by \eqref{def:jfunc}, in the sense of Definition \ref{def:insen}.
\end{theorem}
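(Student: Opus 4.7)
The plan is to reduce the insensitivity condition \eqref{cond:insens} to a null controllability problem for a cascade fourth-order Schr\"odinger system, following the Bodart--Fabre framework adapted to the complex-valued setting. Let $y(x,t):=u(x,t;0,h)$ and $w(x,t):=\partial_\tau u(x,t;\tau,h)|_{\tau=0}$; differentiating \eqref{biharmonic} formally in $\tau$ at $\tau=0$ (using $u_0\equiv 0$) shows that $y$ solves the nonlinear 4NLS with zero initial datum, while $w$ solves the linearization around $y$ with initial datum $\widehat u_0$. Since $\partial_\tau|u|^2=2\,\mathrm{Re}(\bar u\,\partial_\tau u)$, one has
\begin{equation*}
\partial_\tau J(0,h)=\mathrm{Re}\iint_{\mathcal{O}_T}\bar y\,w\,dx\,dt.
\end{equation*}
Introducing a backward adjoint state $z$ dual to $w$, carrying the source $1_{\mathcal{O}}y$ and terminal datum $z(T,\cdot)=0$, duality converts the requirement that \eqref{cond:insens} hold for every $\widehat u_0\in L^2(\Omega)$ into $z(0,\cdot)\equiv 0$. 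Thus Theorem \ref{thm:insen} is equivalent to finding $h$ such that the cascade system
\begin{equation*}
\begin{cases}
iy_t+y_{xx}-y_{xxxx}-\zeta|y|^2y=f+1_\omega h,\\
-iz_t+z_{xx}-z_{xxxx}-\zeta(2|y|^2z+y^2\bar z)=1_{\mathcal{O}}y,
\end{cases}
\end{equation*}
with homogeneous boundary conditions, $y(0,\cdot)=0$, and $z(T,\cdot)=0$, admits a solution satisfying $z(0,\cdot)=0$.

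Next, I would establish null controllability for the linearization of this cascade at $y=z=0$ by a duality/HUM argument. The required observability inequality reads
\begin{equation*}
\|\varphi(T,\cdot)\|_{L^2(\Omega)}^2\leq C\iint_{\omega_T}|\psi|^2\,dx\,dt,
\end{equation*}
where $(\varphi,\psi)$ solves the adjoint cascade. The key tool is a new global Carleman estimate for the fourth-order Schr\"odinger operator $i\partial_t+\partial_x^2-\partial_x^4$ on $Q_T$ with singular weights $e^{-s\alpha(x,t)}$, $\alpha(x,t)\sim (t(T-t))^{-1}e^{\lambda\eta(x)}$, where $\eta\in C^4(\overline{\Omega})$ vanishes to appropriate order on $\partial\Omega$ and has its only critical point inside $\omega\cap\mathcal{O}$. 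Applying this estimate first to $\varphi$, one bounds $\varphi$ globally by the source $1_{\mathcal{O}}\psi$ plus a local term in $\varphi$ on $\omega\cap\mathcal{O}$; a second application to $\psi$ then absorbs the local term, using that $\omega\cap\mathcal{O}\neq\emptyset$ to localize on $\omega\cap\mathcal{O}$ and finally dominating by a global observation on $\omega$. Converting this weighted observability into controllability with sources yields null controllability of the linear cascade with controls $h$ in a weighted class whose weight blows up precisely like $e^{C/t}$ at $t=0$.

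To pass from the linearized to the nonlinear problem, I would appeal to a Liusternik-type inverse mapping theorem. One defines a map $\Lambda$ on suitable weighted Banach spaces whose components are the residuals of the two equations together with the relevant endpoint data, and interprets the previous step as surjectivity of $D\Lambda(0,0,0)$ onto a target subspace containing data of the form $(f,0,0,0)$ with $\|e^{C/t}f\|_{L^2(Q_T)}<\infty$. Since the cubic nonlinearities $|y|^2y$ and $2|y|^2z+y^2\bar z$ are of class $C^1$ and vanish to second order at the origin, the hypotheses of the theorem hold in a neighborhood of $0$; the smallness assumption $\|e^{C/t}f\|_{L^2(Q_T)}\leq\delta$ places the target datum inside this neighborhood, producing a triple $(y,z,h)$ with $z(0,\cdot)=0$, which is Theorem \ref{thm:insen}.

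The main obstacle will be the Carleman estimate for the fourth-order Schr\"odinger operator and its compatible use across the cascade. Unlike the parabolic case, $i\partial_t$ provides no dissipative smoothing to absorb lower-order terms, and the biharmonic part $\partial_x^4$ generates boundary contributions up to third-order normal derivatives after the pointwise identity and integration by parts; the weight $\eta$ must be chosen so that several of its normal derivatives vanish on $\partial\Omega$ to kill these terms. In addition, the complex structure forces one to split the estimate into real and imaginary parts, and chaining the two Carleman estimates through the cascade requires a delicate matching of weights on $\omega\cap\mathcal{O}$, which in turn constrains the cut-off functions used to propagate the observation from $\mathcal{O}$ to $\omega$.
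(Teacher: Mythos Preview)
Your overall architecture matches the paper's: reduce \eqref{cond:insens} to null controllability of the cascade, derive observability for the adjoint via Carleman estimates, then pass to the nonlinear problem by a Liusternik inverse mapping theorem. Two concrete points, however, would block the argument as you describe it.

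First, the cascade Carleman chaining is mis-stated. After applying a single-equation Carleman to each of $\varphi$ and $\psi$, you are left with a local integral of the \emph{non-observed} adjoint variable over $\mathcal{O}_T$, and ``a second application to $\psi$'' does not remove it; you would still carry a local term of the wrong variable on $\omega$, which amounts to observing both adjoint states and hence needing two controls. The step you are missing is to exploit the coupling identity $1_{\mathcal{O}}\psi=i\varphi_t+\varphi_{xx}-\varphi_{xxxx}-g^0$ valid in $\mathcal{O}$: one writes $\iint_{\mathcal{O}_T}\theta^2|\psi|^2\le\iint\eta\,\theta^2\bar\psi\,\psi$ with a cut-off $\eta$ supported in $\omega\cap\mathcal{O}$, substitutes the identity, and integrates by parts in $t$ and $x$ so that all derivatives land on $\varphi$; this converts the local $\psi$ integral into a local $\varphi$ integral in $\omega_T$ plus terms absorbable by the left-hand side. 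Second, the Carleman you invoke is not the one available here. The paper does not build a Fursikov--Imanuvilov style interior estimate with a weight whose only critical point lies in $\omega\cap\mathcal{O}$; it starts from Zheng's Carleman for $i\partial_t+\partial_x^4$ with the \emph{linear} phase $\eta(x)=x-x_0$, which produces \emph{boundary} observation terms (traces of $u_{xx}$ and $u_{xxx}$ at $x=L$), and then eliminates those traces by a regularity--interpolation bootstrap (conjugate by powers of the weight, use the $L^2(0,T;H^4)\cap L^\infty(0,T;H^2)$ regularity to interpolate into $L^2(0,T;H^{11/3})$, and absorb). Your proposal that $\eta$ ``vanish to appropriate order on $\partial\Omega$'' is neither the mechanism used nor sufficient to kill third-order normal traces. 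Finally, the observability inequality you display cannot be the right one: in the adjoint cascade the terminal value of one variable is prescribed to be zero, so your left-hand side is identically zero; what is actually needed (and what drives the Lax--Milgram construction) is a weighted bound of both $\varphi$ and $\psi$ on all of $Q_T$ by the local observation of the single variable dual to the controlled equation, plus weighted source terms.
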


As mentioned at the beginning of this work, the existence of insensitizing controls for \eqref{biharmonic} can be defined equivalently by means of a null controllability problem for a cascade type system similar to the initial \eqref{biharmonic}. Indeed, this process can be systematized when, in defining the functional in \eqref{def:jfunc}, we study the condition given by \eqref{cond:insens}. Precisely, by calculating the derivative in the sense of Gâteaux for the functional $J$ restricted to $\tau = 0$ and given that the functional is the localized $L^2$ norm in $\mathcal{O}$ of the solution $u$, the insensitizing condition \eqref{cond:insens} implies that we can reformulate this by a null controllability problem for a coupled system which is \eqref{eq:nonlinear_system}. Through these calculations, it can be proved that the left-hand side of the second equation in \eqref{eq:nonlinear_system} is the adjoint state of the derivative of \eqref{biharmonic} with respect to $\tau$ (at $\tau = 0$). Thus, the right-hand side couples this last equation with the localized state $1_{\mathcal{O}}u$. We remark that different definitions for the functional also imply different coupling terms (see \cite{guerrero2007null}, we also commented this in Section \ref{sb1}). After this, to ensure that the insensitivity condition of the functional is satisfied, i.e., to ensure \eqref{cond:insens}, it is sufficient to ensure that $v|_{t = 0} \equiv 0$ in $\Omega$\footnote{See \cite{bodart1995controls,lions1990quelques} for a rigorous deduction of this fact and \cite{zl} for an explicit computation to obtain the cascade system with a general nonlinearity for a Ginzburg-Landau equation.}.

So, in this spirit, due to the choice of $J$, we will reformulate our goal as a partial null controllability problem to the nonlinear system of cascade type associated with \eqref{biharmonic}. In other words:

\vspace{0.2cm}
\noindent\textbf{Problem $\mathcal{A}$:} \textit{Can we find a control $h(x,t)=h\in L^2(q_T)$ such that the solutions $(u,v)$ of the following optimality coupled system 
	\begin{equation}\label{eq:nonlinear_system}
	\left\{\begin{array}{lll}
      i u_t  + u_{xx} - u_{xxxx} - \zeta |u|^2 u = f + 1_{\omega}h,  &\text{in}& Q_T,\\
      i  v_t  + v_{xx} - v_{xxxx} - \overline{\zeta} \overline{u}^2\overline{v} - 2\overline{\zeta} |u|^2 v = 1_{\mathcal{O}}u,   &\text{in}& Q_T,\\
      u(t,0) = u(t,L) = v(t,0) = v(t,L) = 0,&\text{on}& t \in (0,T), \\
      u_x(t,0) = u_x(t,L) = v_x(t,0) = v_x(t,L) = 0,&\text{on}& t \in (0,T), \\
         u(0,x)=u_0(x), \quad v(T,x) = 0, &\text{in}&\Omega,
	\end{array} \right.
	\end{equation}
	satisfies, in the time $t=0$, $v|_{t=0}=0$?}
\vspace{0.2cm}

The answer to such a question motivates the next theorem, which is the main result of this paper.

\begin{theorem}\label{thm:control} Assume that $\omega \cap \mathcal{O} \neq \emptyset$ and the initial data $u_{0} \equiv 0$. Then, there exists positive constants $C$ and $\delta$, depending on $\omega$, $\Omega$, $\mathcal{O}$, $\zeta$  and $T$, such that for any $f$ satisfying $$\|e^{C/t}f\|_{L^2(Q_T)}\leq \delta,$$ there exists a control $h \in L^{2}(q_T)$ such that the corresponding solution $(u,v,h)$ of \eqref{eq:nonlinear_system} satisfies $v|_{t=0} = 0$ in $\Omega$. 
\end{theorem}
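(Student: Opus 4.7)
The strategy I would follow is the standard two-step approach for semilinear controllability: prove null controllability for the linearization of \eqref{eq:nonlinear_system} at the origin, then bootstrap to the nonlinear system via an inverse mapping theorem. Linearizing at $u\equiv v\equiv 0$ removes the cubic and quadratic couplings and leaves the cascade
\begin{equation*}
\begin{cases}
iu_t + u_{xx} - u_{xxxx} = f + 1_{\omega}h, & u(0,\cdot)=0,\\
iv_t + v_{xx} - v_{xxxx} = 1_{\mathcal{O}}u, & v(T,\cdot)=0,
\end{cases}
\end{equation*}
with clamped boundary conditions. Finding $h$ such that $v(0,\cdot)=0$ is, by duality, equivalent to an observability inequality for the adjoint cascade
\begin{equation*}
\begin{cases}
-i\varphi_t + \varphi_{xx} - \varphi_{xxxx} = 1_{\mathcal{O}}\psi,\\
-i\psi_t + \psi_{xx} - \psi_{xxxx} = 0,
\end{cases}
\end{equation*}
with suitable terminal data, where only $\varphi$ is observed on $\omega_T$.

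The core of the linear step is a pair of Carleman estimates tailored to the operator $i\partial_t + \partial_x^2 - \partial_x^4$, with weights $\rho(x,t)=e^{-s\alpha(x,t)}$ where $\alpha$ blows up as $t\to 0^{+}$ and $t\to T^{-}$ and is built from a spatial profile $\eta\in C^{4}(\overline{\Omega})$ that is positive in $\Omega$, vanishes to suitable order on $\partial\Omega$, and whose critical points lie in some set $\omega_{0}\Subset\omega\cap\mathcal{O}$. Applied to the homogeneous equation for $\psi$, such an estimate bounds $\psi$ globally by a weighted local integral over $\omega_{0}\times(0,T)$; applied to the equation for $\varphi$, it bounds $\varphi$ globally by weighted local integrals of $1_{\mathcal{O}}\psi$ plus the observation of $\varphi$ on $\omega_T$. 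The cascade transfer of observability is then performed by exploiting the fact that on $\omega_{0}\subset\mathcal{O}$ one has $\psi=-i\varphi_t+\varphi_{xx}-\varphi_{xxxx}$: multiplying by a cut-off $\chi\in C^{\infty}_{c}(\omega\cap\mathcal{O})$ equal to $1$ on $\omega_{0}$ and testing against $\rho^{2}\psi$, integration by parts rewrites the local $\psi$-integral in terms of local integrals of $\varphi,\varphi_{x},\varphi_{xx},\varphi_{xxx}$. For $s$ large, those lower-order contributions are absorbed by the Carleman estimate for $\varphi$, and one is left with an observability inequality that, by a HUM-type minimization in weighted spaces, yields a control $h\in L^{2}(q_T)$ together with weighted Sobolev estimates on $(u,v,h)$ of the type that matches the hypothesis $\|e^{C/t}f\|_{L^{2}(Q_T)}\le\delta$.

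With such weighted estimates in hand, the nonlinear theorem follows from Liusternik's inverse mapping theorem applied to the map
\begin{equation*}
F(u,v,h) = \bigl(iu_t + u_{xx} - u_{xxxx} - \zeta|u|^{2}u - 1_{\omega}h,\ iv_t + v_{xx} - v_{xxxx} - \overline{\zeta}\,\overline{u}^{2}\overline{v} - 2\overline{\zeta}|u|^{2}v - 1_{\mathcal{O}}u\bigr)
\end{equation*}
between Banach spaces whose norms incorporate the weight $e^{C/t}$. Smoothness (in the real sense) of the cubic nonlinearity on these spaces and surjectivity of $F'(0,0,0)$, furnished by the linearized controllability result, provide a preimage of any $f$ with $\|e^{C/t}f\|_{L^{2}(Q_T)}$ sufficiently small, which is precisely the claim of Theorem~\ref{thm:control}. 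The hardest step is the Carleman estimate itself: the operator is skew-symmetric in time but has a self-adjoint fourth-order spatial part with a wrong-sign second-order perturbation, so the conjugation $P_{\rho}=\rho P\rho^{-1}$ produces a proliferation of cross and boundary terms of order up to three in $\varphi$ that must be balanced by the weight in order to close the estimate under only clamped boundary data; once that is achieved, the cascade transfer and the inverse mapping step are comparatively routine.
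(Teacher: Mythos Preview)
Your proposal is correct and follows essentially the same route as the paper: Carleman estimates for the adjoint cascade, transfer of the local $\psi$ integral to $\varphi$ via the identity $\psi=-i\varphi_t+\varphi_{xx}-\varphi_{xxxx}$ on $\omega\cap\mathcal{O}$ with a cut-off, duality/variational construction of the control in weighted spaces, and then the inverse mapping theorem for the nonlinear problem. The only notable implementation difference is that the paper does not build the Carleman estimate from scratch with a Fursikov--Imanuvilov weight whose critical points lie in $\omega_0$; instead it starts from Zheng's one-dimensional estimate (Proposition~\ref{zheng}) with the linear phase $\eta(x)=x-x_0$, which leaves boundary traces $\varphi_{xx}(t,L)$, $\varphi_{xxx}(t,L)$ on the right-hand side that must then be absorbed by a separate trace/interpolation/energy argument (Step~3 of Theorem~\ref{propcarleman}), and it also explicitly modifies the weights near $t=T$ so that they blow up only at $t=0$ (Proposition~\ref{propcarlemannovo}) before running the Lax--Milgram construction.
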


Thus, to find controls insensitizing the functional $J$, that is, to prove Theorem \ref{thm:insen}, it is sufficient to prove the partial null controllability result given in Theorem \ref{thm:control}. Therefore, from now on, we will concentrate on proving Theorem \ref{thm:control}.

\subsection{Heuristic and structure of the manuscript} Let us now explain the ideas to prove the results introduced in the last subsection. The main strategy adopted is based on duality arguments (see, e.g. \cite{DolRus1977,lions1}). Roughly speaking,  we prove suitable observability inequalities for the solutions of an adjoint system, where the main tool is a new Carleman estimate. This Carleman estimate with the right-hand side in weight Sobolev spaces will be the key point to deal with the coupling terms of the linear system associated with \eqref{eq:nonlinear_system}.

In detail, to prove Theorem \ref{thm:control} we will first prove a null controllability result for the linearized system associated with \eqref{eq:nonlinear_system} around zero, which is given by
\begin{equation}\label{eq:linearized_systeminversa}
	\left\{\begin{array}{lll}
      i  u_t  +  u_{xx} - u_{xxxx}  = f^0 + 1_{\omega}h,  &\text{in}& Q_T,\\
      i v_t  + v_{xx} - v_{xxxx} = f^1 +1_{\mathcal{O}}u,   &\text{in}& Q_T,\\
      u(t,0) = u(t,L) = v(t,0) = v(t,L) = 0, &\text{on}& t \in (0,T),\\  u_x(t,0) =  u_x(t,L) = v_x(t,0) = v_x(t,L) = 0,&\text{on}& t \in (0,T), \\
      u(0,x)=u_0(x), v(T,x) = 0, &\text{in}&\Omega.
	\end{array} \right.
	\end{equation}
Here, $f^0$ and $f^1$ are (small) source terms in appropriated $L^p$-weighted spaces. In order to prove it, we consider the adjoint system of \eqref{eq:linearized_systeminversa}, namely, 
\begin{equation}\label{adjuntoinversaa}
	\left\{\begin{array}{lll}
      i \varphi_t  +  \varphi_{xx} - \varphi_{xxxx} = 1_{\mathcal{O}}\psi +g^0,  &\text{in}& Q_T,\\
      i \psi_t  +  \psi_{xx}  - \psi_{xxxx} = g^1,   &\text{in}& Q_T,\\
      \varphi(t,0) = \varphi(t,L) =  \varphi_x(t,0) = \varphi_x(t,L) = 0, &\text{on}& t \in (0,T),\\  \psi(t,0) = \psi(t,L) = \psi_x(t,0) = \psi_x(t,L) = 0, &\text{on}& t \in (0,T), \\
      \varphi(T,x)=0, \psi(0,x) = \psi_0(x), &\text{in}&\Omega.
	\end{array} \right.
	\end{equation}
With this in hand, we can prove an \textit{observability inequality}, with aspects like the one below,
\begin{equation}\label{observabilidadeinversa}
    \iint_{Q_T} \rho_1(|\varphi|^2 + |\psi|^2) dxdt \\ \leq C \iint_{\omega_T} \rho_2 |\varphi|^2dxdt + \iint_{Q_T} \rho_3 (|g^0|^2 +|g^1|^2)dxdt,
\end{equation}
where $\rho_i$, $i=1,2,3$, are appropriate weights functions. Then, by duality approach, the desired partial null controllability property is a direct consequence of the \eqref{observabilidadeinversa} and can be read as follows.
\begin{theorem}\label{thm:control1}Assuming that $\omega \cap \mathcal{O} \neq \emptyset$ and the initial data $u_{0} \equiv 0$,  there exists a positive constant $C$, depending on $\delta$, $\omega$, $\Omega$, $\mathcal{O}$ and $T$, such that for  $f^0$ and $f^1$, in suitable weighted spaces, one can find a control $h$ such that the associated solution $(u,v)$ of \eqref{eq:linearized_systeminversa}
satisfies $v|_{t=0}\equiv 0$ in $\Omega$.
\end{theorem}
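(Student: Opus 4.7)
The plan is to follow the duality (HUM) approach: reduce the partial null controllability of \eqref{eq:linearized_systeminversa} to the observability inequality \eqref{observabilidadeinversa} for the adjoint cascade system \eqref{adjuntoinversaa}, and then construct the control by minimizing a weighted quadratic functional on the adjoint solutions. The delicate point is that only $\varphi$ is observed locally in $\omega_T$, while the coupling in \eqref{adjuntoinversaa} runs from $\psi$ into $\varphi$, so observations of $\psi$ must be propagated from the watched variable back to the variable that acts as source. To this end, I would first establish a ``Sobolev type'' Carleman estimate for the scalar operator $i\partial_t+\partial_x^2-\partial_x^4$ on $Q_T$ with clamped boundary conditions, with weights $\rho_i$ blowing up as $t\to 0^+$ and $t\to T^-$, keeping weighted norms of $\varphi$, $\partial_x\varphi$, $\partial_x^2\varphi$, $\partial_x^3\varphi$ and $\partial_t\varphi$ on the left-hand side. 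Applied to the decoupled second equation of \eqref{adjuntoinversaa}, it bounds $\psi$ globally in these weighted norms by a local weighted integral of $|\psi|^2$ on $\omega_0\times(0,T)$ plus the source $g^1$, where $\omega_0\Subset\omega\cap\mathcal{O}$ (nonempty by hypothesis); applied to the first equation, it bounds $\varphi$ similarly by a local integral on $\omega_T$ plus weighted integrals of $|1_{\mathcal{O}}\psi|^2$ and $|g^0|^2$.

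The main obstacle is the absorption of the local $\psi$-term produced by its Carleman estimate. Since $\omega_0\subset\mathcal{O}$, on $\omega_0\times(0,T)$ one may substitute $\psi=i\varphi_t+\varphi_{xx}-\varphi_{xxxx}-g^0$ using the first equation of \eqref{adjuntoinversaa}. Multiplying by $\rho\,\overline{\psi}\,\eta^2$ with a spatial cutoff $\eta$ supported in $\omega_0$ and integrating by parts in $x$ and $t$, one transfers all derivatives onto $\rho\eta^2\psi$, turning the local integral of $|\psi|^2$ into a finite sum of local integrals involving $\psi$ and its first four $x$-derivatives and its first $t$-derivative, each paired against $\varphi$. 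Young's inequality then splits every term: the $\psi$-factors are absorbed by the global left-hand side of the Carleman estimate for $\psi$ (thanks to a small prefactor), while the $\varphi$-factors accumulate a local weighted integral of $|\varphi|^2$ on $\omega_T$, which the Carleman estimate for $\varphi$ already controls through its local observation term. A careful bookkeeping of weights so that all absorptions are consistent yields \eqref{observabilidadeinversa}. This step is precisely what the Sobolev-norm right-hand side feature of the Carleman estimate, highlighted in the introduction, is designed to permit; everything else being classical, it is where all the difficulty is concentrated.

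With \eqref{observabilidadeinversa} at hand, Theorem \ref{thm:control1} follows from the standard duality/minimization argument. Introducing, for $\psi_0\in L^2(\Omega)$, the convex functional
\begin{equation*}
\mathcal{J}(\psi_0)=\frac{1}{2}\iint_{\omega_T}\rho_2^{-1}|\varphi|^2\,dx\,dt+\iint_{Q_T}\rho_3^{-1}\bigl(f^0\overline{\varphi}+f^1\overline{\psi}\bigr)\,dx\,dt,
\end{equation*}
where $(\varphi,\psi)$ denotes the solution of \eqref{adjuntoinversaa} with $g^0=g^1=0$ and $\psi(0)=\psi_0$, the observability inequality \eqref{observabilidadeinversa} yields the coercivity of $\mathcal{J}$ in the appropriate weighted completion of $L^2(\Omega)$, provided $f^0$ and $f^1$ lie in the dual weighted spaces associated with $\rho_3$. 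The unique minimizer then produces a control of the form $h=1_\omega\,\rho_2^{-1}\overline{\varphi}$ whose associated solution $(u,v)$ of \eqref{eq:linearized_systeminversa} satisfies $v|_{t=0}\equiv 0$ in $\Omega$, as required.
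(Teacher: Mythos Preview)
Your outline is correct and follows the same overall architecture as the paper: a Carleman estimate for the adjoint cascade \eqref{adjuntoinversaa}, the key substitution $\psi=-i\varphi_t+\varphi_{xx}-\varphi_{xxxx}-g^0$ on a set $\tilde\omega\Subset\omega\cap\mathcal{O}$ to trade the local $\psi$--term for a local $\varphi$--term, and then a duality construction of the control. Two differences with the paper are worth noting.

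First, for the Carleman step you postulate directly an interior--observation estimate for $i\partial_t+\partial_x^2-\partial_x^4$. The paper instead starts from Zheng's estimate (Proposition~\ref{zheng}), which has \emph{boundary} observation terms $\xi^3\theta^2|\varphi_{xx}|^2(t,L)$ and $\xi\theta^2|\varphi_{xxx}|^2(t,L)$; it then removes these by trace and interpolation combined with a bootstrap regularity argument (Step~3 of the proof of Theorem~\ref{propcarleman}). Your shortcut is legitimate, but be aware that an interior Carleman for this operator is not available off the shelf in the paper; deriving it is essentially equivalent to carrying out the paper's Step~3.

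Second, for the construction of $h$ you minimize a HUM functional over the initial datum $\psi_0$ (with $g^0=g^1=0$), whereas the paper applies Lax--Milgram to the bilinear form
\[
a\big((\tilde\varphi,\tilde\psi),(\varphi,\psi)\big)=\operatorname{Re}\!\iint_{Q_T}\hat\sigma^2\big(\mathcal{L}\tilde\varphi-1_{\mathcal{O}}\tilde\psi\big)\overline{\big(\mathcal{L}\varphi-1_{\mathcal{O}}\psi\big)}+\operatorname{Re}\!\iint_{Q_T}\hat\sigma^2(\mathcal{L}\tilde\psi)\overline{\mathcal{L}\psi}+\operatorname{Re}\!\iint_{\omega_T}\hat\nu\hat\sigma^2\tilde\varphi\overline{\varphi}
\]
on the completion $P$ of adjoint pairs. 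Both routes yield $v|_{t=0}=0$, so your argument proves Theorem~\ref{thm:control1} as stated. The paper's variational formulation, however, delivers more: it produces $(u,v,h)$ directly in the weighted space $\mathcal{C}$ (Theorem~\ref{controlabilidadelinearizado}), and this extra regularity---in particular $(\hat\nu)^{-2}\hat\sigma^{-1}u\in L^2(0,T;H^4)\cap L^\infty(0,T;H^2_0)$---is exactly what is needed to run the inverse mapping argument for the nonlinear Theorem~\ref{thm:control}. Your HUM minimizer does not obviously furnish these weighted bounds on $(u,v)$; if you intend to proceed to the nonlinear case you would have to recover them a~posteriori, which amounts to redoing the paper's construction.
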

The last step is to use an inverse mapping theorem to extend the previous result to the nonlinear system. 
\begin{remarks}Finally, the following comments are now given in order:
\begin{itemize}
\item[1.] In the Definition \ref{def:insen}, it is important to point out that the data $u_0$ will be taken conveniently, precisely, will be taken such that the functional $J$, given by \eqref{def:jfunc}, is well-defined.
\item[2.] On the Theorem \ref{thm:control}, the smallness of $f$ is related to the fact that we will apply a local inversion argument, that is, we first study this problem when a linearized form of equation \eqref{eq:nonlinear_system} is considered and then we apply a local inversion mapping theorem.
\item[3.]We claim that Theorem \ref{thm:insen} is equivalent to Theorem \ref{thm:control}. In fact, considering $(u,v)$ solution of \eqref{eq:nonlinear_system} and using the boundary conditions of \eqref{eq:nonlinear_system}, a direct calculation leads us to the following
\begin{equation}\label{expression}
\left.\frac{\partial}{\partial \tau} J(\tau, h))\right|_{\tau=0}=-\operatorname{Re} \int_{\Omega}i\hat{u}_{0} v(0) d x.
\end{equation}
Therefore, we can conclude that the left-hand side of \eqref{expression} is zero, for all $ \widehat{u}_{0} \in L^{2}(\Omega)$ with $\|\widehat{u}_{0}\|_{L^{2}(\Omega)} = 1$, if and only if, $v(0)=0$ in $\Omega$. \color{black}
\item[4.] It is worth mentioning that, in our work, we need a Carleman estimate with internal observation, differently from what was proven by Zheng \cite{zheng}. Another interesting point is that in Zheng's work, he proved the regularity of the solution of the 4NLS in the class $$C^1([0,T]; L^2(\Omega) ) \cap C^0([0,T];H^3(\Omega) \cap H_0^2(\Omega)),$$ which is also different in our case, we need more regular solutions (see Section \ref{Sec1}) to help us to use the inverse mapping theorem.
\item[5.] Finally, observe that our sentinel functional $J$ is defined in the sense of $L^2-$norm. If we want to insensitize a functional with a norm greater than $L^2$, for example, $\partial^n_xu$, for $n\geq1$, then we need a system coupled in the second equation of \eqref{eq:nonlinear_system} in the form $\partial^n_x (1_{\mathcal{O}} \partial^n_x u)$, this means, the coupling has twice as many derivatives. More details about this kind of problem will be given in Section \ref{SecFinal}.
\end{itemize}
\end{remarks}

Our work is outlined in the following way: Section \ref{Sec2} is devoted to presenting a new Carleman estimate which will be the key to proving the main result of this manuscript. In Section \ref{Sec3}, we show the null controllability results, that is, the linear case (Theorem \ref{thm:control1}) and the nonlinear one (Theorem \ref{thm:control}).  Section \ref{SecFinal}, we present further comments and some open problems that seem to be of interest from the mathematical point of view. Finally,  for
completeness, at the end of this paper, we have an Appendix \ref{Sec1} about the existence of solutions for the systems considered here.

\section{Carleman estimates}\label{Sec2}
In this section, we prove a new Carleman estimate to the fourth order Schr\"odinger operator $\partial^4_xu-\partial^2_xu$. For the sake of simplicity, we will consider the operator $\mathcal{L}=\partial^4_xu$, that is, only with the higher term. So, to derive this new Carleman, first, let us introduce the basic weight function $\eta(x) = (x- x_0)$ with $x_0<0$. Now, for $\lambda>1$ and $\mu >1$ we define the following 
\begin{equation}\label{weight}
    \theta =e^l, \quad \xi(t,x) = \frac{e^{3\mu \eta(x)}}{t(T-t)}\quad \text{and} \quad l(t,x) = \lambda \frac{e^{3\mu \eta(x)}- e^{5\mu ||\eta||_{\infty}}}{t(T-t)}.
\end{equation}
Our result will be derived from a previous result due to Zheng \cite{zheng}, which can be seen as follows.
\begin{proposition}\label{zheng}There exists three constants $\mu_0>1$, $C_0 >0$ and $C>0$ such that for all $\mu >\mu_0$ and for all $\lambda \geq C_0(T+T^2)$, 
\begin{equation}\label{zheng1}
\begin{split}
\iint_{Q_T} &\left( \lambda^7\mu^8\xi^7 \theta^2 |u|^2+\lambda^5\mu^6\xi^5 \theta^2 |u_{x}|^2+\lambda^3\mu^4\xi^3 \theta^2 |u_{xx}|^2+\lambda\mu^2\xi \theta^2 |u_{xxx}|^2 \right) dxdt \\ \leq& C\left(\iint_{Q_T} |\theta Pu|^2 dxdt + \lambda^3\mu^3\int_0^T (\xi^3 \theta^2|u_{xx}|^2)(t,L)   dt + \lambda\mu \int_0^T (\xi \theta^2 |u_{xxx}|^2)(t,L) dt \right),
\end{split}
\end{equation}
where $Pu = i\partial_tu + \partial^4_xu$.
\end{proposition}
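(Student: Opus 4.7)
The plan is to follow the standard conjugation method for Carleman estimates, adapted to the fourth-order Schrödinger operator $Pu = i\partial_t u + \partial_x^4 u$. First I perform the change of unknown $w = \theta u = e^l u$ and compute the conjugated operator $\mathcal{L}w := \theta P(\theta^{-1} w)$, expanding $\partial_x^k(\theta^{-1} w) = \theta^{-1} \sum_{j=0}^k c_{k,j}(l_x, l_{xx}, \ldots)\, \partial_x^j w$ for $k = 1, \ldots, 4$. This produces a polynomial in the derivatives of $w$ whose coefficients are combinations of powers of $l_x = 3\mu\lambda\,\eta'\,\xi$ and its higher derivatives in $x$, together with $l_t$. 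I would then split $\mathcal{L} = \mathcal{L}_1 + \mathcal{L}_2 + R$, with $\mathcal{L}_1$ containing the formally self-adjoint terms (those of even order in $\partial_x$ with real coefficients and the pure multiplication by powers of $l_x$) and $\mathcal{L}_2$ containing the skew-adjoint terms ($i\partial_t$ and the odd-order $\partial_x$ contributions), while $R$ collects the terms of lower weight in $\lambda, \mu$ that are to be dominated at the end.

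The heart of the argument is the identity
\begin{equation*}
\normo{\mathcal{L}w - Rw}^2 = \normo{\mathcal{L}_1 w}^2 + \normo{\mathcal{L}_2 w}^2 + 2\,\mathrm{Re}(\mathcal{L}_1 w, \mathcal{L}_2 w),
\end{equation*}
so the main computation is to show that the cross term $\mathrm{Re}(\mathcal{L}_1 w, \mathcal{L}_2 w)$ produces, after a sequence of integrations by parts in $x$ and $t$, a positive bulk contribution of the form
\begin{equation*}
c\brk{\lambda^7\mu^8\xi^7|w|^2 + \lambda^5\mu^6\xi^5|w_x|^2 + \lambda^3\mu^4\xi^3|w_{xx}|^2 + \lambda\mu^2\xi|w_{xxx}|^2},
\end{equation*}
up to remainders with strictly fewer powers of $\lambda\mu\xi$. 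Since $\eta' \equiv 1$ and $\eta > 0$ on $\Omega$ thanks to $x_0 < 0$, the spatial derivatives of $l$ all carry a definite (positive) sign, and by first choosing $\mu$ large and then $\lambda \geq C_0(T+T^2)$ (which absorbs $|l_t|$, whose dangerous size $\sim 1/(t(T-t))^2$ is dominated by a single power of $\xi$), every subleading bulk term is absorbed into the positive display above.

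It then remains to return to the variable $u$ through $w = \theta u$, which reintroduces the factor $\theta^2$ in front of each integrand, and to handle the boundary terms generated during the integrations by parts. The contributions at $t = 0$ and $t = T$ vanish because $\theta$ decays to zero there. The boundary pieces at $x = 0$ and $x = L$ involving $w$ and $w_x$ are killed by the Dirichlet and Neumann conditions, while those containing $w_{xx}$ and $w_{xxx}$ survive. At $x = 0$, where $\eta$ attains its minimum and $l_x > 0$, these terms appear with the favourable sign and with the smaller values of $\xi$, so they are absorbed by the positive bulk terms; at $x = L$, where $\xi$ and $\theta^2$ are the largest, the surviving boundary contributions are transferred to the right-hand side and reproduce precisely the two traces of $u_{xx}$ and $u_{xxx}$ at $x = L$ appearing in \eqref{zheng1}.

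The main technical obstacle is the bookkeeping of the cross term $\mathrm{Re}(\mathcal{L}_1 w, \mathcal{L}_2 w)$: the fourth-order character of the operator generates a long list of products of $\partial_x^j w$ with derivatives of $l$ up to order four, and one must verify term by term that the dominant contributions combine with the correct positive sign to produce the four weighted integrals with the homogeneity $\lambda^{7-2k}\mu^{8-2k}\xi^{7-2k}$ indicated above. A secondary subtlety is to check that the boundary terms at $x = 0$ really come with the absorbable sign, a fact that is not automatic and which forces the choice $x_0 < 0$ in the definition of $\eta$; the parallel check at $x = L$ identifies exactly which boundary traces of $u_{xx}$ and $u_{xxx}$ cannot be killed and therefore must be kept on the observation side of \eqref{zheng1}.
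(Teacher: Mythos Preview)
The paper does not prove this proposition: it is quoted verbatim as ``a previous result due to Zheng \cite{zheng}'' and used as a black box. So there is no in-paper proof to compare against; the relevant comparison is with Zheng's original argument.

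Your outline follows exactly the standard conjugation scheme that Zheng uses (and that underlies essentially all Carleman estimates of this type): set $w=\theta u$, compute $\theta P(\theta^{-1}w)$, split into symmetric and antisymmetric parts, expand the cross term, and track the powers of $\lambda,\mu,\xi$. The expected homogeneity $\lambda^{7-2k}\mu^{8-2k}\xi^{7-2k}$ for $\partial_x^k w$ is correct, the mechanism for absorbing $l_t$ via $\lambda\geq C_0(T+T^2)$ is the right one, and your identification of which boundary traces survive (those at $x=L$, because the weight is increasing in $x$) matches the statement.

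One small inaccuracy in your write-up: you say the boundary terms at $x=0$ ``are absorbed by the positive bulk terms''. That cannot literally happen, since a trace integral on $\{x=0\}$ cannot be bounded by a volume integral over $Q_T$ with the same weights. What actually occurs is that, because $l_x>0$ and the integrations by parts produce boundary terms with a definite sign governed by $l_x$ (or by $\eta'$), the contributions at $x=0$ come out with the \emph{favourable} sign and are simply dropped from the left-hand side, not absorbed. The contributions at $x=L$ carry the opposite sign and are moved to the right-hand side, giving the two trace terms in \eqref{zheng1}. This is a wording issue rather than a gap in the strategy, but it is worth stating correctly since it is precisely the place where the choice $x_0<0$ is used.
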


With the previous theorem in hand, we are in a position to prove a new Carleman estimate associated with the operator $\mathcal{L}u$. The result is stated in the following way.

\begin{theorem}\label{propcarleman} Let $\omega, \mathcal{O} \subset \Omega$ be open subsets such that $\omega \cap \mathcal{O} \neq \emptyset$. Then, there exists a positive constant $\mu_1$, such that for any $\mu > \mu_1$, one can find two positive constants $\lambda_1$ and $C$ depending on $\lambda$, $\mu$, $\Omega$, $\omega$ such that for any $\lambda > \lambda_1(T+T^2)$ the following estimate for $\varphi$ and $\psi$ of \eqref{adjuntoinversa} holds
\begin{equation}\label{eqcarleman}
\begin{split}
\iint_{Q_T} &\left( \lambda^7\mu^8\xi^7 \theta^2 |\varphi|^2+\lambda^5\mu^6\xi^5 \theta^2 |\varphi_{x}|^2+\lambda^3\mu^4\xi^3 \theta^2 |\varphi_{xx}|^2+\lambda\mu^2\xi \theta^2 |\varphi_{xxx}|^2 \right) dxdt \\ &+\iint_{Q_T} \left( \lambda^7\mu^8\xi^7 \theta^2 |\psi|^2+\lambda^5\mu^6\xi^5 \theta^2 |\psi_{x}|^2+\lambda^3\mu^4\xi^3 \theta^2 |\psi_{xx}|^2+\lambda\mu^2\xi \theta^2 |\psi_{xxx}|^2 \right) dxdt \\  \leq &C\left( \iint_{Q_T} \theta^2(|g^0|^2+|g^1|^2) dxdt +  \lambda \mu \iint_{\omega_T}\xi \theta
^2|\varphi|^2 dxdt\right). 
\end{split}
\end{equation}
\end{theorem}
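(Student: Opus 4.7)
My strategy is to apply Proposition~\ref{zheng} separately to each equation of the adjoint cascade system~\eqref{adjuntoinversaa}, couple the two resulting estimates through the overlap $\omega\cap\mathcal{O}$, and then convert the boundary traces at $x=L$ into an internal observation localized in $\omega$.

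First, I rewrite the two equations in the form $P\varphi=1_{\mathcal{O}}\psi+g^{0}-\varphi_{xx}$ and $P\psi=g^{1}-\psi_{xx}$, where $P=i\partial_{t}+\partial_{x}^{4}$ is Zheng's operator. Denoting by $A(u)$ the full left-hand side of~\eqref{zheng1} applied to $u$ and by $B_{L}(u)$ the pair of boundary trace terms at $x=L$ appearing in~\eqref{zheng1}, Proposition~\ref{zheng} yields
\begin{align*}
A(\varphi)&\leq C\iint_{Q_{T}}\theta^{2}\bigl(|g^{0}|^{2}+1_{\mathcal{O}}|\psi|^{2}+|\varphi_{xx}|^{2}\bigr)\,dx\,dt+B_{L}(\varphi),\\
A(\psi)&\leq C\iint_{Q_{T}}\theta^{2}\bigl(|g^{1}|^{2}+|\psi_{xx}|^{2}\bigr)\,dx\,dt+B_{L}(\psi).
\end{align*}
Since $\xi$ is uniformly bounded below on $Q_{T}$ by a positive constant $c_{0}$, one has $\theta^{2}|\varphi_{xx}|^{2}\leq(\lambda^{3}\mu^{4}c_{0}^{3})^{-1}\lambda^{3}\mu^{4}\xi^{3}\theta^{2}|\varphi_{xx}|^{2}$, and the analogous bound holds for $\theta^{2}|\psi_{xx}|^{2}$, so these two contributions are absorbed into $A(\varphi)$ and $A(\psi)$ for $\lambda,\mu$ large. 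After summing the two estimates, the coupling term $\iint_{\mathcal{O}_{T}}\theta^{2}|\psi|^{2}$ is likewise dominated by $(\lambda^{7}\mu^{8}c_{0}^{7})^{-1}$ times the leading term of $A(\psi)$ and is absorbed on the left-hand side.

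The delicate step is the elimination of $B_{L}(\varphi)$ and $B_{L}(\psi)$ in favor of the target localized term $\lambda\mu\iint_{\omega_{T}}\xi\theta^{2}|\varphi|^{2}$. I fix subsets $\omega_{0}\Subset\omega_{1}\Subset\omega\cap\mathcal{O}$ and a cutoff $\chi\in C_{c}^{\infty}(\omega_{1})$ with $\chi\equiv 1$ on $\overline{\omega_{0}}$, multiply each equation of~\eqref{adjuntoinversaa} by weighted test functions of the form $\chi\xi^{a}\theta^{2}\overline{\varphi}$ (and its derivatives), and integrate by parts in both $x$ and $t$. This bootstrap converts $B_{L}(\varphi)$ and $B_{L}(\psi)$ into integrals over $Q_{T}$ of lower-order weighted derivatives of $\varphi,\psi$ — absorbable into $A(\varphi)+A(\psi)$ — plus local integrals over $\omega_{0}\times(0,T)$. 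Wherever $\psi$ or its derivatives appear in these local integrals, I substitute
\begin{equation*}
\psi=i\varphi_{t}+\varphi_{xx}-\varphi_{xxxx}-g^{0}\qquad\text{on }\omega\cap\mathcal{O},
\end{equation*}
which is available because $\omega_{0}\subset\omega\cap\mathcal{O}$; this trades an observation of $\psi$ for observations of space and time derivatives of $\varphi$ on $\omega_{0}$, plus a harmless contribution controlled by $\iint_{Q_{T}}\theta^{2}|g^{0}|^{2}$. A final sequence of integrations by parts in $x$, using the compact support of $\chi$ inside $\omega_{1}$, then reduces the remaining observations of spatial derivatives of $\varphi$ on $\omega_{0}$ to the target form $\lambda\mu\iint_{\omega_{T}}\xi\theta^{2}|\varphi|^{2}$, completing~\eqref{eqcarleman}.

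The principal obstacle lies in this last chain of integrations by parts: one must produce an internal observation depending solely on $|\varphi|^{2}$ over $\omega$, starting from the traces of $\varphi_{xx},\varphi_{xxx}$ at $x=L$ and from the cascade coupling through $\psi$, while keeping a balance of the powers of $\lambda$ and $\mu$ that still permits every lower-order term generated along the way to be reabsorbed into $A(\varphi)+A(\psi)$.
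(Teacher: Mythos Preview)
Your proposal has a genuine structural gap in the treatment of the boundary terms $B_{L}(\varphi)$ and $B_{L}(\psi)$. You propose to multiply the equations of~\eqref{adjuntoinversaa} by test functions of the form $\chi\,\xi^{a}\theta^{2}\overline{\varphi}$ with $\chi\in C_{c}^{\infty}(\omega_{1})$ and $\omega_{1}\Subset\omega\cap\mathcal{O}\Subset(0,L)$, and then integrate by parts. But such a $\chi$ vanishes identically near $x=L$, so every integration by parts against it produces \emph{no} contribution at $x=L$ whatsoever; there is simply no mechanism in your argument that can generate, let alone bound, the traces $|\varphi_{xx}(t,L)|$, $|\varphi_{xxx}(t,L)|$, $|\psi_{xx}(t,L)|$, $|\psi_{xxx}(t,L)|$ appearing in $B_{L}$. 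A compactly supported interior multiplier cannot ``convert'' a boundary trace into an interior integral.

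More importantly, you have the roles of the two troublesome right-hand-side terms reversed. In the paper the local observation $\lambda\mu\iint_{\omega_{T}}\xi\theta^{2}|\varphi|^{2}$ is \emph{not} extracted from $B_{L}$; it is extracted from the coupling term $\iint_{\mathcal{O}_{T}}\theta^{2}|\psi|^{2}$ that you chose to absorb. One picks $\tilde{\omega}\Subset\omega\cap\mathcal{O}$ and a cutoff $\eta\in C_{c}^{\infty}(\omega)$ with $\eta\equiv1$ on $\tilde{\omega}$, writes $\psi=-i\varphi_{t}+\varphi_{xx}-\varphi_{xxxx}-g^{0}$ on $\mathcal{O}$, and computes
\[
\iint_{\mathcal{O}_{T}}\theta^{2}|\psi|^{2}\le\operatorname{Re}\iint_{\tilde{\omega}_{T}}\eta\theta^{2}\overline{\psi}\bigl(-i\varphi_{t}+\varphi_{xx}-\varphi_{xxxx}-g^{0}\bigr)\,dx\,dt,
\]
then integrates by parts to move the derivatives off $\varphi$. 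The terms $\Psi_{2,3}$ and $\Psi_{3,5}$ recombine into $\eta\theta^{2}\varphi\,\overline{(i\psi_{t}+\psi_{xx}-\psi_{xxxx})}=\eta\theta^{2}\varphi\,\overline{g^{1}}$, while the commutator terms are dominated by $\delta\cdot A(\psi)$ plus the desired local term $C\lambda\iint_{\tilde{\omega}_{T}}\xi\theta^{2}|\varphi|^{2}$. The boundary terms $B_{L}$, by contrast, are handled globally: the paper uses the trace inequality $|\varphi_{xx}(t,L)|^{2}+|\varphi_{xxx}(t,L)|^{2}\le C\|\varphi(t,\cdot)\|_{H^{7/2}}^{2}$, interpolates $H^{7/2}$ between $L^{2}$ and $H^{11/3}$, and then runs a two-step regularity bootstrap on the auxiliary functions $\varphi_{j}=\xi_{j}(t)\varphi$ (with $\xi_{1}=\theta^{-1}\xi^{-1/2}$, $\xi_{2}=\theta^{-1}\xi^{-5/2}$) to bound the $H^{11/3}$ piece by quantities absorbable into $A(\varphi)$. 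No local observation is produced in this step.

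Note finally that if your direct absorption of $\iint_{\mathcal{O}_{T}}\theta^{2}|\psi|^{2}$ into $A(\psi)$ were combined with a correct absorption of $B_{L}$, the outcome would be an estimate $A(\varphi)+A(\psi)\le C\iint_{Q_{T}}\theta^{2}(|g^{0}|^{2}+|g^{1}|^{2})$ with no observation term at all. Taking $g^{0}=g^{1}=0$ and $\psi_{0}\neq0$ shows this is impossible, which confirms that the observation \emph{must} originate from the coupling term, not from the boundary traces.
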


Before proving to prove this result, let us give the idea to derive \eqref{eqcarleman}. To do it, 
we split the proof into several steps. The first one consists of applying the Carleman estimate given by Proposition \ref{zheng} for $(\varphi,\psi)$ solutions of
\begin{equation}\label{adjuntoinversa}
	\left\{\begin{array}{lll}
      i \varphi_t  +  \varphi_{xx} - \varphi_{xxxx} = 1_{\mathcal{O}}\psi +g^0,  &\text{in}& {Q_T},\\
      i \psi_t  +  \psi_{xx}  - \psi_{xxxx} = g^1,   &\text{in}& {Q_T},\\
      \varphi(t,0) = \varphi(t,L) =  \varphi_x(t,0) = \varphi_x(t,L) = 0, &\text{on}& t \in (0,T), \\  \psi(t,0) = \psi(t,L) = \psi_x(t,0) = \psi_x(t,L) = 0, &\text{on}& t \in (0,T), \\
      \varphi(T,x)=0, \psi(0,x) = \psi_0, &\text{in}&\Omega.
	\end{array} \right.
	\end{equation}
The second step concerns the estimate for a local integral term of $\psi$ in terms of a local integral of $\varphi$ and global integral terms of $g^0$, $g^1$, $\varphi$, $\psi$ and smaller order terms of $\psi$. Finally, we will estimate integral terms on the border in terms of the global integral of $\varphi$, $\psi$, and smaller order integral terms.

\begin{proof}[Proof of Theorem \ref{propcarleman}] In what follows, remember that  $\Omega \subset \mathbb{R}$ is a bounded domain whose boundary $\partial\Omega$ is regular enough.  Consider $T>0$,  $\omega$ and $\mathcal{O}$ to be two nonempty subsets of $\Omega$. Additionally, as defined at the beginning of this work $Q_{T}=\Omega \times (0, T)$, $\omega_{T}=\omega \times (0, T)$, $\Sigma_{T}=\partial \Omega \times(0, T)$, $\mathcal{O}_{T}=\mathcal{O} \times(0, T)$ and denote by $C$ a generic constant which can be different from one computation to another. Thus, let us split the proof into three steps.

\vspace{0.2cm}

\noindent \textbf{Step 1: Applying Carleman estimates \eqref{zheng1}.}

\vspace{0.1cm}

Thanks to \eqref{zheng1} we have, for  $\varphi$ and $\psi$, solution of \eqref{adjuntoinversa}, that
\begin{equation}\label{eq1passo1}
\begin{split}
&\iint_{Q_T} \left( \lambda^7\mu^8\xi^7 \theta^2 |\varphi|^2+\lambda^5\mu^6\xi^5 \theta^2 |\varphi_{x}|^2+\lambda^3\mu^4\xi^3 \theta^2 |\varphi_{xx}|^2+\lambda\mu^2\xi \theta^2 |\varphi_{xxx}|^2 \right) dxdt \\& \quad \quad \leq C\left(\iint_{Q_T} \theta^2 |\varphi_{xx}| dxdt + \iint_{Q_T} \theta^2|g^0|^2 dxdt + \iint_{\mathcal{O}_T} \theta^2|\psi|^2 dxdt \right. \\ & \quad \quad \quad \left.+ \lambda^3\mu^3\int_0^T (\xi^3 \theta^2|\varphi_{xx}|^2)(t,L)   dt + \lambda\mu \int_0^T (\xi \theta^2 |\varphi_{xxx}|^2)(t,L) dt \right). 
\end{split}
\end{equation}
For $\lambda,\mu$ large enough, we obtain
\begin{equation}\label{eq2passo1}
\begin{split}
&\iint_{Q_T} \left( \lambda^7\mu^8\xi^7 \theta^2 |\varphi|^2+\lambda^5\mu^6\xi^5 \theta^2 |\varphi_{x}|^2+\lambda^3\mu^4\xi^3 \theta^2 |\varphi_{xx}|^2+\lambda\mu^2\xi \theta^2 |\varphi_{xxx}|^2 \right) dxdt \\& \quad \quad \leq C\left( \iint_{Q_T} \theta^2|g^0|^2 dxdt + \iint_{\mathcal{O}_T} \theta^2|\psi|^2 dxdt + \lambda^3\mu^3\int_0^T (\xi^3 \theta^2|\varphi_{xx}|^2)(t,L)   dt \right. \\& \quad \quad  \quad \left.+ \lambda\mu \int_0^T (\xi \theta^2 |\varphi_{xxx}|^2)(t,L) dt \right).
\end{split}
\end{equation}
Now, applying \eqref{zheng1} for $\psi$, we get that
\begin{equation}\label{eq3passo1}
\begin{split}
&\iint_{Q_T} \left( \lambda^7\mu^8\xi^7 \theta^2 |\psi|^2+\lambda^5\mu^6\xi^5 \theta^2 |\psi_{x}|^2+\lambda^3\mu^4\xi^3 \theta^2 |\psi_{xx}|^2+\lambda\mu^2\xi \theta^2 |\psi_{xxx}|^2 \right) dxdt \\ & \quad \quad \leq C\left(\iint_{Q_T} \theta^2 |\psi_{xx}| dxdt + \iint_{Q_T} \theta^2|g^1|^2 dxdt + \iint_{\mathcal{O}_T} |\psi|^2 dxdt \right. \\&\quad \quad \quad   \left.+ \lambda^3\mu^3\int_0^T (\xi^3 \theta^2|\varphi_{xx}|^2)(t,L)   dt + \lambda\mu \int_0^T (\xi \theta^2 |\varphi_{xxx}|^2)(t,L) dt \right).
\end{split}
\end{equation}
Again, by using \eqref{eq3passo1} for $\lambda, \mu$ large enough we have the following:
\begin{equation}\label{eq4passo1}
\begin{split}
&\iint_{Q_T} \left( \lambda^7\mu^8\xi^7 \theta^2 |\psi|^2+\lambda^5\mu^6\xi^5 \theta^2 |\psi_{x}|^2+\lambda^3\mu^4\xi^3 \theta^2 |\psi_{xx}|^2+\lambda\mu^2\xi \theta^2 |\psi_{xxx}|^2 \right) dxdt \\ & \leq C\left( \iint_{Q_T} \theta^2|g^1|^2 dxdt  + \lambda^3\mu^3\int_0^T (\xi^3 \theta^2|\psi_{xx}|^2)(t,L)   dt + \lambda\mu \int_0^T (\xi \theta^2 |\psi_{xxx}|^2)(t,L) dt \right).
\end{split}
\end{equation}
Note that, putting together \eqref{eq2passo1} and \eqref{eq4passo1}, we obtain the following estimate
\begin{equation}\label{eq5passo1}
\begin{split}
&\iint_{Q_T} \left( \lambda^7\mu^8\xi^7 \theta^2 |\varphi|^2+\lambda^5\mu^6\xi^5 \theta^2 |\varphi_{x}|^2+\lambda^3\mu^4\xi^3 \theta^2 |\varphi_{xx}|^2+\lambda\mu^2\xi \theta^2 |\varphi_{xxx}|^2 \right) dxdt \\& + \iint_{Q_T} \left( \lambda^7\mu^8\xi^7 \theta^2 |\psi|^2+\lambda^5\mu^6\xi^5 \theta^2 |\psi_{x}|^2+\lambda^3\mu^4\xi^3 \theta^2 |\psi_{xx}|^2+\lambda\mu^2\xi \theta^2 |\psi_{xxx}|^2 \right) dxdt \\ &   \leq C\left( \iint_{Q_T} \theta^2|g^0|^2 dxdt + \iint_{Q_T} \theta^2|g^1|^2 dxdt +\iint_{\mathcal{O}_T} \theta^2|\psi|^2 dxdt \right. \\ &  + \lambda^3\mu^3\int_0^T (\xi^3 \theta^2(|\varphi_{xx}|^2+|\psi_x|^2))(t,L)   dt +\lambda\mu \int_0^T (\xi \theta^2 (|\varphi_{xxx}|^2+|\psi_{xxx}|^2))(t,L) dt .
\end{split}
\end{equation}

\vspace{0.1cm}

\noindent \textbf{Step 2: Estimates for the local integral of $\psi$.} 

\vspace{0.1cm}

In this step, let us estimate the last term in the right-hand side of \eqref{eq5passo1}, that is, the local integral term of $\psi$ in $\mathcal{O}_T$. Now, since $\omega \cap \mathcal{O} \neq \emptyset$, there exists $\tilde{\omega}_T \subset \omega \cap \mathcal{O}$. From now on, take a cut-off function $\eta \in C_0^{\infty}(\omega)$ such that $\eta \equiv 1$ in $\tilde{\omega}_T$. Observe that 
$$\psi = -i\varphi_t + \varphi_{xx} - \varphi_{xxxx} - g^0, \quad \text{in } \mathcal{O}_T,$$
so
\begin{equation}\label{eq1passo2}
\begin{aligned}
    \iint_{\mathcal{O}_T}\theta^2|\psi|^2 dxdt \leq \iint_{\tilde{\omega}_T}\eta \theta^2|\psi|^2 dxdt &= \iint_{\tilde{\omega}_T}\eta \theta^2\overline{\psi}\psi dxdt \\& = Re\iint_{\tilde{\omega}_T}\eta \theta^2\overline{\psi}\left(-i\varphi_t + \varphi_{xx} - \varphi_{xxxx} - g^0\right) dxdt \\&:= \sum_{i=1}^{4} \Psi_i,
    \end{aligned}
\end{equation}
where $\Psi_i$, for $i = 1,2,3,4$, are the integrals of the right-hand side of \eqref{eq1passo2}. We now estimate these terms. For $\Psi_1$, integrating by parts in $t$ we have that

\begin{equation}\label{eq2passo2}
\begin{split}
\Psi_1 &= Re\iint_{\tilde{\omega}_T} \eta \theta^2 \overline{\psi} (-i\varphi_t) dxdt = Re\left( \left(\eta \theta^2 \psi (-i\varphi)\right)\bigg|^T_0 - \iint_{Q_T} \left(\eta \theta^2 \overline{\psi} \right)_t ( -i\varphi)dxdt  \right) \\ &=Re\left(- \iint_{\tilde{\omega}_T} \left(\eta \theta^2\right)_t \overline{\psi} (-i\varphi)dxdt - \iint_{\tilde{\omega}_T} \eta \theta^2 \overline{\psi}_t  (-i\varphi)dxdt\right).
\end{split}
\end{equation}
Since,
$$|(\theta^{m})_t| = |(e^{ml})_t| = |e^{ml}(ml)_t| =|me^{ml}(l)_t| \leq mC\lambda T \xi^2 \theta^m,$$
then, for $m = 2$, we get that $|(\eta \theta^2)_t| \leq C\lambda \xi^2\theta^2$, and by using Young inequality we obtain 
\begin{equation}\label{eq3passo2}
\begin{split}
    Re \left(- \iint_{\tilde{\omega}_T} \left(\eta \theta^2\right)_t \overline{\psi} (-i\varphi)dxdt\right)& \leq CRe \left(\lambda \iint_{\tilde{\omega}_T} \xi^2 \theta^2 \overline{\psi}\varphi dxdt\right) \\&=  Re \left(\iint_{\tilde{\omega}_T} \left(\lambda^{\frac{7}{2}}\mu^4\xi^{\frac{7}{2}} \theta \overline{\psi}\right) \left(C \lambda^{-\frac{5}{2}} \mu^{-4}\xi^{-\frac{3}{2}} \varphi\right) dxdt   \right) \\
   & \leq \delta \lambda^{7}\mu^8\iint_{Q_T} \xi^{7}\theta^2|\psi|^2dxdt +C\lambda^{-5}\mu^{-8}\iint_{\tilde{\omega}_T}\xi^{-3}|\varphi|^2dxdt.
\end{split}
\end{equation}
Combining \eqref{eq2passo2} with \eqref{eq3passo2}, we get
\begin{equation}\label{eq4passo2}
\begin{split}
&\Psi_1  =Re\left(- \iint_{\tilde{\omega}_T} \eta \theta^2  \overline{\psi}_t  (-i\varphi)dxdt\right) \\ &  \quad \quad + \delta \lambda^{7}\mu^8\iint_{Q_T} \xi^{7}\theta^2|\psi|^2dxdt +C\lambda^{-5}\mu^{-8}\iint_{\tilde{\omega}_T}\xi^{-3}|\varphi|^2dxdt .
\end{split}
\end{equation}
So, for $\delta$ small enough we can absorb the global integral term of \eqref{eq4passo2} with the left-hand side of \eqref{eq5passo1}. Due to the boundary conditions for $\psi$ and $\varphi$ and since $\eta$ has compact support on $\tilde{\omega}$ by integrating by parts for space variable, we obtain
\begin{equation}\label{eq5passo2}
\begin{split}
   \Psi_2 &= Re\iint_{\tilde{\omega}_T} \eta \theta^2\overline{\psi} \varphi_{xx} dxdt = Re\left( \iint_{\tilde{\omega}_T}  \left(\eta \theta^2 \overline{\psi}\right)_{xx} \varphi dxdt \right) \\ &= Re\left( \iint_{\tilde{\omega}_T}  \left(\eta \theta^2\right)_{xx} \overline{\psi} \varphi dxdt + 2\iint_{\tilde{\omega}_T} \left (\eta \theta^2\right)_{x} \overline{\psi}_x \varphi dxdt+\iint_{\tilde{\omega}_T}  \eta \theta^2 \overline{\psi}_{xx} \varphi dxdt \right)  \\ &= \sum_{i=1}
  ^{3}\Psi_{2,i}.
\end{split}
\end{equation}
To bound each term of the right-hand side of \eqref{eq5passo2} first observe that the following estimates, for the derivative in space of the weight function $\theta$, hold true
$$\left|\left(\theta^2\right)_x\right| = \left|\left(e^{2l}\right)_x\right| = \left|e^{2l}(2l)_x\right| = 2\left|\theta^2 l_x\right| \leq C\lambda \mu \xi\theta^2$$
and 
\begin{equation*}
\begin{split}
\left|\left(\theta^2\right)_{xx}\right| &= \left|\left(e^{2l}\right)_{xx}\right| = \left|\left(e^{2l}(2l)_x\right)_x\right| = 2\left|\left(\theta^2 l_x\right)_x\right| \\ &\leq 4\left|\theta^2 l_x^2\right| +2\left|\theta^2 l_{xx}\right| \leq 4\theta^2|l_x|^2 +2\theta^2|l_{xx}|
\\&\leq C\lambda^2\mu^2\xi^2\theta^2 +C\lambda \mu \xi^2\theta^2 \leq C \lambda^2 \mu^2 \xi^2\theta^2.
\end{split}
\end{equation*}
Therefore, it yields that
\begin{equation}\label{eq6passo2}
\begin{split}
    \Psi_{2,1} &= Re\left(\iint_{\tilde{\omega}_T} \left(\eta \theta^2\right)_{xx}\overline{\psi}\varphi dxdt \right) \leq Re\left(C \lambda^2 \mu^2\iint_{\tilde{\omega}_T}  \xi^2\theta^2 \overline{\psi}\varphi dxdt \right) \\ &= Re \left(\iint_{\tilde{\omega}_T}  \left(\lambda^{\frac{7}{2}} \mu^{4}\xi^{\frac{7}{2}}\theta \overline{\psi}\right) \left(C\lambda^{-\frac{3}{2}} \mu^{-2} \xi^{-\frac{3}{2}} \theta \varphi\right) dxdt  \right) \\ &\leq \delta\lambda^{7} \mu^{8} \iint_{Q_T}  \xi^{7}\theta^2 |\psi|^2dxdt +C\lambda^{-3} \mu^{-4} \iint_{\tilde{\omega}_T}\xi^{-3} \theta^2|\varphi|^2 dxdt
\end{split}
\end{equation}
and
\begin{equation}\label{eq7passo2}
\begin{split}
    \Psi_{2,2} &= 2 Re\left(\iint_{\tilde{\omega}_T} \left(\eta \theta^2\right)_{x}\overline{\psi}_x\varphi dxdt \right) \leq Re\left(C \lambda \mu\iint_{\tilde{\omega}_T}  \xi\theta^2 \overline{\psi}\varphi dxdt \right) \\&= 2Re \left(\iint_{\tilde{\omega}_T}  \left(\lambda^{\frac{5}{2}} \mu^{3}\xi^{\frac{5}{2}}\theta \overline{\psi}_x\right) \left(C\lambda^{-\frac{3}{2}} \mu^{-2} \xi^{-\frac{3}{2}} \theta \varphi\right) dxdt  \right) \\ &\leq  \delta\lambda^{5} \mu^{6} \iint_{Q_T}  \xi^{5}\theta^2 |\psi_x|^2dxdt +C\lambda^{-3} \mu^{-4} \iint_{\tilde{\omega}_T}\xi^{-3} \theta^2|\varphi|^2 dxdt.
\end{split}
\end{equation}
Finally, $\Psi_{2,3}$ does not need to be estimated since we use it to obtain the equation for $\psi$. Now, combining \eqref{eq5passo2}, \eqref{eq6passo2} and \eqref{eq7passo2} we get 
\begin{equation}\label{eq8passo2}
\begin{split}
    \Psi_2 &\leq C\lambda^{-3}\mu^{-4} \iint_{\tilde{\omega}_T} \xi^{-3}\theta^2|\varphi|^2 dxdt \\
    &+\delta \left(\lambda^{7} \mu^{8} \iint_{Q_T}  \xi^{7}\theta^2 |\psi|^2dxdt+\lambda^{7} \mu^{8} \iint_{Q_T}  \xi^{7}\theta^2 |\psi|^2dxdt+\lambda^{7} \mu^{8} \iint_{Q_T}  \xi^{7}\theta^2 |\psi_{xx}|^2dxdt \right).
\end{split}
\end{equation}

For $\Psi_3$ we use the boundary conditions and integrate with respect to the space variable four times to obtain
\begin{equation}\label{eq9passo2}
\begin{split}
   \Psi_3 =& Re\iint_{\tilde{\omega}_T} \eta \theta^2\overline{\psi} (-\varphi_{xxxx}) dxdt = Re\left( \iint_{\tilde{\omega}_T}  \left(\eta \theta^2 \overline{\psi}\right)_{xxxx} (-\varphi) dxdt \right) \\
  =& Re\left( \iint_{\tilde{\omega}_T}  \left(\eta \theta^2\right)_{xxxx} \overline{\psi} \varphi dxdt + 4\iint_{\tilde{\omega}_T}  \left(\eta \theta^2\right)_{xxx} \overline{\psi}_x \varphi dxdt +6\iint_{\tilde{\omega}_T}  \left(\eta \theta^2\right)_{xx} \overline{\psi}_{xx}dxdt \right) \\& +Re\left(4\iint_{\tilde{\omega}_T}  \left(\eta \theta^2\right)_{x} \overline{\psi}_{xxx}  \varphi dxdt   +\iint_{\tilde{\omega}_T}  \eta \theta^2 \left(-\overline{\psi}_{xxxx}\right) \varphi dxdt \right)  \\
  =& \sum_{i=1}
  ^{5}\Psi_{3,i}.
\end{split}
\end{equation}
Now our task is to estimate these terms. Observe that we have the following estimates in space variable for $k-$th order derivative in space  variable for the weight function $\theta$, $$\left|\left(\theta^2\right)_{kx}\right| \leq 2^k C \left(\lambda^k \mu^k \xi^k\right)\theta^2,$$
thus

\begin{equation}\label{eq10passo2}
\begin{split}
\Psi_{3,1} &=    Re \left( \iint_{\tilde{\omega}_T}  \left(\eta \theta^2\right)_{xxxx} \overline{\psi} \varphi dxdt \right) \\&\leq Re \left( C\lambda^4\mu^4 \iint_{\tilde{\omega}_T}  \xi^4\theta^2  \overline{\psi} \varphi dxdt \right) \\& =  Re \left(\iint_{\tilde{\omega}_T}  \left(\lambda^{\frac{7}{2}} \mu^4 \xi^{\frac{7}{2}}\theta \overline{\psi}\right) \left(C\lambda^{\frac{1}{2}}  \xi^{\frac{1}{2}} \theta \varphi\right) dxdt \right) \\ &\leq  \delta\lambda^{7} \mu^{8} \iint_{Q_T}  \xi^{7}\theta^2 |\psi|^2dxdt +C\lambda  \iint_{\tilde{\omega}_T}\xi \theta^2|\varphi|^2 dxdt,
\end{split}
\end{equation}

\begin{equation}\label{eq11passo2}
\begin{split}
\Psi_{3,2} &=     Re \left( 4\iint_{\tilde{\omega}_T}  \left(\eta \theta^2\right)_{xxx} \overline{\psi}_x \varphi dxdt \right)\\& \leq Re \lambda^3 \mu^3\left( \iint_{\tilde{\omega}_T}  \xi^3 \theta^2 \overline{\psi}_x \varphi dxdt \right) \\ &=  Re \left(\iint_{\tilde{\omega}_T}  \left(\lambda^{\frac{5}{2}} \mu^3 \xi^{\frac{5}{2}}\theta \overline{\psi}_x\right) \left(C\lambda^{\frac{1}{2}}  \xi^{\frac{1}{2}} \theta \varphi\right) dxdt \right) \\ &\leq  \delta\lambda^{5} \mu^{6} \iint_{Q_T}  \xi^{5}\theta^2 |\psi_{x}|^2dxdt +C\lambda  \iint_{\tilde{\omega}_T}\xi \theta^2|\varphi|^2 dxdt,
\end{split}
\end{equation}

\begin{equation}\label{eq12passo2}
\begin{split}
\Psi_{3,3} &=     Re \left( 6\iint_{\tilde{\omega}_T}  \left(\eta \theta^2\right)_{xx} \overline{\psi}_{xx}dxdt \right)\\& \leq   Re \left(C\lambda^2\mu^2 \iint_{\tilde{\omega}_T}  \xi^2 \theta^2 \overline{\psi}_{xx}dxdt \right) \\ &=  Re \left(\iint_{\tilde{\omega}_T}  \left(\lambda^{\frac{3}{2}} \mu^2 \xi^{\frac{3}{2}}\theta \overline{\psi}_{xx}\right) \left(C\lambda^{\frac{1}{2}}  \xi^{\frac{1}{2}} \theta \varphi\right) dxdt \right)\\ &\leq  \delta\lambda^{3} \mu^{4} \iint_{Q_T}  \xi^{3}\theta^2 |\psi_{xx}|^2dxdt +C\lambda  \iint_{\tilde{\omega}_T}\xi \theta^2|\varphi|^2 dxdt
\end{split}
\end{equation}
and
\begin{equation}\label{eq13passo2}
\begin{split}
\Psi_{3,4} &=     Re \left( 4\iint_{\tilde{\omega}_T}  \left(\eta \theta^2\right)_{x} \overline{\psi}_{xxx}  \varphi dxdt \right) \\&\leq Re \left(C \lambda \mu\iint_{\tilde{\omega}_T}\xi  \theta^2 \overline{\psi}_{xxx}  \varphi dxdt \right) \\& =  Re \left(\iint_{\tilde{\omega}_T}  \left(\lambda^{\frac{1}{2}} \mu \xi^{\frac{1}{2}}\theta \overline{\psi}_{xxx}\right) \left(C\lambda^{\frac{1}{2}}  \xi^{\frac{1}{2}} \theta \varphi\right) dxdt \right)\\ &\leq  \delta\lambda \mu^{2} \iint_{Q_T}  \xi\theta^2 |\psi_{xxx}|^2dxdt +C\lambda  \iint_{\tilde{\omega}_T}\xi \theta^2|\varphi|^2 dxdt. 
\end{split}
\end{equation}
We do not estimate $\Psi_{3,5}$ since we use this term to obtain the equation for $\psi$. By putting  \eqref{eq10passo2}, \eqref{eq11passo2}, \eqref{eq12passo2} and  \eqref{eq13passo2} in \eqref{eq9passo2}, we conclude 
\begin{equation}\label{eq14passo2}
\begin{split}
    \Psi_3 &\leq  Re\left(\iint_{\tilde{\omega}_T}  \eta \theta^2 \overline{\psi}_{xxxx} \varphi dxdt \right) + C\lambda  \iint_{\tilde{\omega}_T}\xi \theta^2|\varphi|^2 dxdt \\
    &+\delta \left(
 \iint_{Q_T} \left[\lambda^{7}\mu^{8}\xi^{7}\theta^2 |\psi|^2+
\lambda^{5}\mu^{6}\xi^{5}\theta^2 |\psi_{x}|^2+
\lambda^{3}\mu^{4}\xi^{3}\theta^2 |\psi_{xx}|^2+
\lambda\mu^{2}\xi\theta^2 |\psi_{xxx}|^2 \right]dxdt \right).
\end{split}
\end{equation}

Finally, for $\Psi_4$ we get 
\begin{equation}\label{eq15passo2}
\begin{split}
\Psi_4 &= Re \iint_{\tilde{\omega}_T} \eta \theta^2 \overline{\psi} \left(-g^0\right) dxdt \\&\leq \delta \lambda^7 \mu^8 \iint_{Q_T} \xi^7 \theta^2 |\psi|^2 dxdt + C\lambda^{-7} \mu^{-8} \iint_{Q_T} \xi^{-7} \theta^2 |g^0|^2 dxdt.
\end{split}
\end{equation}

Combining \eqref{eq1passo2}, \eqref{eq4passo2}, \eqref{eq8passo2}, \eqref{eq14passo2} and \eqref{eq15passo2} we get
\begin{equation}\label{eq16passo2}
\begin{split}
&    \iint_{\mathcal{O}_T} \theta^2|\psi|^2 dxdt     \\ &\leq C\left(\lambda  \iint_{\tilde{\omega}_T}\xi \theta^2|\varphi|^2 dxdt +\lambda^{-3}\mu^{-4} \iint_{\tilde{\omega}_T} \xi^{-3}\theta^2|\varphi|^2 dxdt+\lambda^{-5}\mu^{-8}\iint_{\tilde{\omega}_T}\xi^{-3}|\varphi|^2dxdt \right) \\ &+Re \left( \iint_{\tilde{\omega}_T} \eta \theta^2 \varphi \left(i\overline{\psi}_t + \overline{\psi}_{xx} - \overline{\psi}_{xxxx}\right) dxdt \right) + C\lambda^{-7} \mu^{-8} \iint_{Q_T} \xi^{-7} \theta^2 |g^0|^2 dxdt \\ &+ 
    \delta \left(
 \iint_{Q_T} \left[\lambda^{7}\mu^{8}\xi^{7}\theta^2 |\psi|^2+
\lambda^{5}\mu^{6}\xi^{5}\theta^2 |\psi_{x}|^2+
\lambda^{3}\mu^{4}\xi^{3}\theta^2 |\psi_{xx}|^2+
\lambda\mu^{2}\xi\theta^2 |\psi_{xxx}|^2 \right]dxdt \right).
\end{split}
\end{equation}
Since we get $i\overline{\psi}_t + \overline{\psi}_{xx} - \overline{\psi}_{xxxx} = \overline{-i\psi_t + \psi_{xx} - \psi_{xxxx}} = \overline{g^1}$  in $Q_T$  and 
\begin{equation}\label{eq17passo2}
\begin{split}
    Re \left( \iint_{\tilde{\omega}_T} \eta \theta^2 \varphi \overline{g^1} dxdt \right) \leq \delta \lambda^7 \mu^8 \iint_{Q_T} \xi^7 \theta^2 |\varphi|^2 dxdt + C\lambda^{-7} \mu^{-8} \iint_{Q_T} \xi^{-7} \theta^2 |g^1|^2 dxdt,
    \end{split}
\end{equation}
then, putting together \eqref{eq16passo2} and \eqref{eq17passo2}, yields that
\begin{equation}\label{eq18passo2}
\begin{split}
    &\iint_{\mathcal{O}_T}\theta^2|\psi|^2 dxdt\leq C\lambda^{-7} \mu^{-8} \iint_{Q_T} \xi^{-7} \theta^2 \left(|g^0|^2+ |g^1|^2\right) dxdt + \delta \lambda^7 \mu^8 \iint_{Q_T} \xi^7 \theta^2 |\varphi|^2 dxdt   \\ &+ C\left(\lambda  \iint_{\tilde{\omega}_T}\xi \theta^2|\varphi|^2 dxdt +\lambda^{-3}\mu^{-4} \iint_{\tilde{\omega}_T} \xi^{-3}\theta^2|\varphi|^2 dxdt+\lambda^{-5}\mu^{-8}\iint_{\tilde{\omega}_T}\xi^{-3}|\varphi|^2dxdt \right) \\ &+ 
    \delta \left(
 \iint_{Q_T} \left[\lambda^{7}\mu^{8}\xi^{7}\theta^2 |\psi|^2+
\lambda^{5}\mu^{6}\xi^{5}\theta^2 |\psi_{x}|^2+
\lambda^{3}\mu^{4}\xi^{3}\theta^2 |\psi_{xx}|^2+
\lambda\mu^{2}\xi\theta^2 |\psi_{xxx}|^2 \right]dxdt \right). 
\end{split}
\end{equation}

Combining \eqref{eq5passo1} with \eqref{eq18passo2} we obtain the following 
\begin{equation*}
\begin{split}
\iint_{Q_T}& \left( \lambda^7\mu^8\xi^7 \theta^2 |\varphi|^2+\lambda^5\mu^6\xi^5 \theta^2 |\varphi_{x}|^2+\lambda^3\mu^4\xi^3 \theta^2 |\varphi_{xx}|^2+\lambda\mu^2\xi \theta^2 |\varphi_{xxx}|^2 \right) dxdt \\& +\iint_{Q_T} \left( \lambda^7\mu^8\xi^7 \theta^2 |\psi|^2+\lambda^5\mu^6\xi^5 \theta^2 |\psi_{x}|^2+\lambda^3\mu^4\xi^3 \theta^2 |\psi_{xx}|^2+\lambda\mu^2\xi \theta^2 |\psi_{xxx}|^2 \right) dxdt \\ \leq&  \ C\left( \iint_{Q_T} \theta^2\left(|g^0|^2+|g^1|^2\right) dxdt +\lambda^{-7} \mu^{-8} \iint_{Q_T} \xi^{-7} \theta^2 \left(|g^0|^2+ |g^1|^2\right) dxdt \right) \\
&+C\left(\lambda  \iint_{\tilde{\omega}_T}\xi \theta^2|\varphi|^2 dxdt +\lambda^{-3}\mu^{-4} \iint_{\tilde{\omega}_T} \xi^{-3}\theta^2|\varphi|^2 dxdt+\lambda^{-5}\mu^{-8}\iint_{\tilde{\omega}_T}\xi^{-3}|\varphi|^2dxdt \right)  \\
&    +\delta \left(
 \iint_{Q_T}\left[\lambda^{7}\mu^{8}\xi^{7}\theta^2 (|\varphi|^2 +|\psi|^2)+
\lambda^{5}\mu^{6}\xi^{5}\theta^2 |\psi_{x}|^2+
\lambda^{3}\mu^{4}\xi^{3}\theta^2 |\psi_{xx}|^2+
\lambda\mu^{2}\xi\theta^2 |\psi_{xxx}|^2 \right]dxdt \right) \\
&+C\left(\lambda^3\mu^3\int_0^T \left(\xi^3\theta^2\left(|\varphi_{xx}|^2+|\psi_{xx}|^2\right)\right)(t,L)   dt + \lambda\mu \int_0^T \left(\xi \theta^2\left(|\varphi_{xxx}|^2+ |\psi_{xxx}|^2\right)\right)(t,L) dt \right).
\end{split}
\end{equation*}
Then, for $\lambda, \mu$ large enough and $\delta$ small enough we get
\begin{equation}\label{eq19passo2}
\begin{split}
&\iint_{Q_T} \left( \lambda^7\mu^8\xi^7 \theta^2 |\varphi|^2+\lambda^5\mu^6\xi^5 \theta^2 |\varphi_{x}|^2+\lambda^3\mu^4\xi^3 \theta^2 |\varphi_{xx}|^2+\lambda\mu^2\xi \theta^2 |\varphi_{xxx}|^2 \right) dxdt \\& +\iint_{Q_T} \left( \lambda^7\mu^8\xi^7 \theta^2 |\psi|^2+\lambda^5\mu^6\xi^5 \theta^2 |\psi_{x}|^2+\lambda^3\mu^4\xi^3 \theta^2 |\psi_{xx}|^2+\lambda\mu^2\xi \theta^2 |\psi_{xxx}|^2 \right) dxdt \\ &\leq  C\left( \iint_{Q_T} \theta^2\left(|g^0|^2+|g^1|^2\right) dxdt +\lambda  \iint_{\tilde{\omega}_T}\xi \theta^2|\varphi|^2 dxdt\right) \\
&+C\lambda^3\mu^3\int_0^T (\xi^3 \theta^2(|\varphi_{xx}|^2+|\psi_{xx}|^2))(t,L)   dt + C\lambda\mu \int_0^T \left(\xi \theta^2\left(|\varphi_{xxx}|^2+ |\psi_{xxx}|^2\right)\right)(t,L) dt \\
&=:\mathcal{I}+\mathcal{B}_1+\mathcal{B}_2.
\end{split}
\end{equation}

\vspace{0.1cm}

\noindent \textbf{Step 3: Estimates of the boundary terms.} 

\vspace{0.1cm}
Now, we will find an estimate for the boundary term on the right-hand side of \eqref{eq19passo2}, precisely, $\mathcal{B}_1$ and $\mathcal{B}_2$. Using trace Theorem (note that $5/2<7/2$) we have
\begin{equation}\label{eq1passo3}
\begin{split}
\mathcal{B}_1=&\ C\lambda^3\mu^3\int_0^T \left(\xi^3 \theta^2 \left(|\varphi_{xx}|^2+|\psi_{xx}|^2\right)\right)(t,L)   dt \\
\leq & \ C\lambda^3\mu^3\int_0^T\left(\xi^3 \theta^2\right)(t,L)\left(\left\| \varphi\right\|_{H^{\frac{5}{2}}(\tilde{\omega}_T)}^2+\left\| \psi\right\|_{H^{\frac{5}{2}}(\tilde{\omega}_T)}^2\right)dt\\
\leq & \ C\lambda^3\mu^3\int_0^T\left(\xi^3 \theta^2\right)(t,L)\left(\left\| \varphi\right\|_{H^{\frac{7}{2}}\left(\tilde{\omega}_T\right)}^2+\left\| \psi\right\|_{H^{\frac{7}{2}}(\tilde{\omega}_T)}^2\right)dt
\end{split}
\end{equation}
and
\begin{equation}\label{eq2passo3}
\begin{split}
\mathcal{B}_2=&\ C\lambda\mu\int_0^T \left(\xi \theta^2\left(|\varphi_{xxx}|^2+|\psi_{xxx}|^2\right)\right)(t,L)   dt \\
\leq & \ C\lambda^3\mu^3\int_0^T\left(\xi^3 \theta^2\right)(t,L)\left(\left\| \varphi\right\|_{H^{\frac{7}{2}}(\tilde{\omega}_T)}^2+\left\| \psi\right\|_{H^{\frac{7}{2}}(\tilde{\omega}_T)}^2\right)dt.
\end{split}
\end{equation}
So, putting together \eqref{eq1passo3} and \eqref{eq2passo3}, yields that
\begin{equation}\label{eq3passo3}
\begin{split}
\mathcal{B}_1+\mathcal{B}_2\leq & \ C\lambda^3\mu^3\int_0^T(\xi^3\theta^2)(t,L)\left(\left\| \varphi\right\|_{H^{\frac{7}{2}}(\tilde{\omega}_T)}^2+\left\| \psi\right\|_{H^{\frac{7}{2}}(\tilde{\omega}_T)}^2\right)dt.
\end{split}
\end{equation}
%Now consider the following Theorem \cite[Theorem 4.17]{Adams}.
%\begin{theorem}\label{Adams2} If $\Omega \subset \mathbb{R}^{n}$ has the uniform cone property or if it is bounded and has the cone property, and if $1 \leq p \leq+\infty,$ then there exists a constant $K=K(m, p, \Omega)$ such that for $0 \leq j \leq m$ and any $u \in W^{m, p}(\Omega)$
%\[
%\|u\|_{j, p} \leq K\|u\|_{m, p}^{j / m}\|u\|_{0, p}^{(m-j) / m}
%\]
%where $\|\cdot\|_{j, p}$ is the norm of the space $W^{j, p}(\Omega)$.
%\end{theorem}

Using interpolation in the Sobolev spaces $H^s(\Omega)$, for $s\geq0$, yields that
\begin{equation*}
\begin{split}
\mathcal{B}_1+\mathcal{B}_2\leq & \ C_1\lambda^3\mu^3\int_0^T\left(\xi^3 \theta^2\right)(t,L)\left\| \varphi\right\|_{H^{\frac{11}{3}}(\tilde{\omega}_T)}^{\frac{21}{11}}\left\| \varphi\right\|^{\frac{1}{11}}_{L^2(\tilde{\omega}_T)}dt\\ &+C_1\lambda^3\mu^3\int_0^T\left(\xi^3 \theta^2\right)(t,L)\left\| \psi\right\|_{H^{\frac{11}{3}}(\tilde{\omega}_T)}^{\frac{21}{11}}\left\| \psi\right\|^{\frac{1}{11}}_{L^2(\tilde{\omega}_T)}dt\\
=&\ C_1\lambda^3\mu^3\int_0^T\left(\xi^{\frac{255}{22}}\xi^{-\frac{189}{22}} \theta^{\frac{86}{22}}\theta^{-\frac{42}{22}}\right)(t,L)\left\| \varphi\right\|_{H^{\frac{11}{3}}(\tilde{\omega}_T)}^{\frac{21}{11}}\left\| \varphi\right\|^{\frac{1}{11}}_{L^2(\tilde{\omega}_T)}dt\\ &+C_1\lambda^3\mu^3\int_0^T\left(\xi^{\frac{255}{22}}\xi^{-\frac{189}{22}} \theta^{\frac{86}{22}}\theta^{-\frac{42}{22}}\right)(t,L)\left\| \psi\right\|_{H^{\frac{11}{3}}(\tilde{\omega}_T)}^{\frac{21}{11}}\left\| \psi\right\|^{\frac{1}{11}}_{L^2(\tilde{\omega}_T)}dt
\\
\leq &\ C_{\epsilon}\lambda^6\mu^6\int_0^T(\xi^{255} \theta^{86})(t,L)\left\| \varphi\right\|^2_{L^2(\tilde{\omega}_T)}dt+\epsilon\lambda^{-2}\mu^{-2}\int_0^T\left(\xi^{-\frac{189}{21}} \theta^{-2}\right)(t,L)\left\| \varphi\right\|_{H^{\frac{11}{3}}(\tilde{\omega}_T)}^2dt
\\
&+C_{\epsilon}\lambda^6\mu^6\int_0^T\left(\xi^{255} \theta^{86}\right)(t,L)\left\| \psi\right\|^2_{L^2(\tilde{\omega}_T)}dt+\epsilon\lambda^{-2}\mu^{-2}\int_0^T\left(\xi^{-\frac{189}{21}} \theta^{-2}\right)(t,L)\left\| \psi\right\|_{H^{\frac{11}{3}}(\tilde{\omega}_T)}^2dt,
\end{split}
\end{equation*}
or equivalently, 
\begin{equation}\label{eq3passo4}
\begin{split}
\mathcal{B}_1+\mathcal{B}_2\leq &\ C_{\epsilon}\lambda^6\mu^6\int_0^T\left(\xi^{255} \theta^{86}\right)(t,L)\left\| \varphi\right\|^2_{L^2(\tilde{\omega}_T)}dt\\&+\epsilon\lambda^{-2}\mu^{-2}\int_0^T\left(\xi^{-9} \theta^{-2}\right)(t,L)\left\| \varphi\right\|_{H^{\frac{11}{3}}(\tilde{\omega}_T)}^2dt\\&+C_{\epsilon}\lambda^6\mu^6\int_0^T\left(\xi^{255} \theta^{86}\right)(t,L)\left\| \psi\right\|^2_{L^2(\tilde{\omega}_T)}dt\\&+\epsilon\lambda^{-2}\mu^{-2}\int_0^T\left(\xi^{-9} \theta^{-2}\right)(t,L)\left\| \psi\right\|_{H^{\frac{11}{3}}(\tilde{\omega}_T)}^2dt
\\
=:&\ \mathcal{I}_1+\mathcal{I}_2+\mathcal{I}_3+\mathcal{I}_4,
\end{split}
\end{equation}
for some positive constant $C_{\epsilon}$.

At this moment, our goal is to prove integrals $\mathcal{I}_i$, for $i=1,2,3$, can be absolved by the left-hand side of \eqref{eq19passo2}.  Let us start with the analysis of $\mathcal{I}_2$, precisely the quantity $$\int_0^T\left(\xi^{-9} \theta^{-2}\right)(t,L)\left\| \varphi\right\|_{H^{\frac{11}{3}}(\tilde{\omega}_T)}^2dt.$$

Consider $\varphi_{1}(  t,x)  :=\xi_{1}(  t) \varphi(
t,x)  $ with
\[
\xi_{1}(  t)  =\theta^{-1}\xi^{-\frac{1}{2}}\text{.}%
\]
Then $\varphi_{1}$ satisfies the system
\begin{equation}
\left\{
\begin{array}
[c]{lll}%
-i\varphi_{1t}+\varphi_{1xx}- \varphi_{1xxxx}=f_{1}:=\xi_{1t}\varphi, &  & \text{in }Q_T  \text{,}\\
\varphi_{1}(  t,0)  =\varphi_{1}(  t,L)  =\varphi_{1x}(  t,0)=\varphi_{1x}(  t,L)=0, &  & \text{on }(  0,T)  \text{,}\\
\varphi_{1}(  T,x)  =0, %\bar{v}_{T} 
&  & \text{in }\Omega.
\end{array}
\right.  \label{eq4passo4}%
\end{equation}
Now, observe that, since $\varphi_x(t,0)=0$ and $\left\vert \xi_{1t}\right\vert
\leq C \lambda \xi^{\frac{3}{2}}\theta^{-1}$, we have%
\begin{equation}\label{eq5passo4}
\begin{split}
\left\Vert f_{1}\right\Vert _{L^{2}( Q_T  )  }^{2}
\leq&\ C\iint_{Q_T}\theta^{-2} \lambda^2\xi^{3}|\varphi|^{2}dxdt \\
\leq&
\ C \iint _{Q_T} \left\{  \lambda^2 \xi ^{3}  |\varphi| ^{2}+ \lambda^3 |\varphi_{x}|^{2}+\lambda  |\varphi_{xx}|^{2}+\lambda^{-1}  |\varphi_{xxx}|^{2}\right\}  \theta^{-2} dxdt, 
\end{split}
\end{equation}
for some constant $C>0$ and all $s\ge s_0$.
Moreover, thanks to Section \ref{Sec1}, $\varphi_{1}\in L^2(0,T;H^2(\Omega))\cap C([0,T];L^2(\Omega))$. Then, interpolating between 
$L^{2}(  0,T;H^{2}( \Omega)  )  $ and $L^{\infty}(
0,T;L^{2}(  \Omega)  )  $, we infer that $\varphi_{1}\in L^{2}(
0,T;H^{5/3}(  \Omega)  )  $ and%
\begin{equation}
\left\Vert \varphi_{1}\right\Vert _{L^{2}(  0,T;H^{5/3}(  \Omega)
)  }\leq C\left\Vert f_{1}\right\Vert _{L^{2}( Q_T )  }\text{.} \label{eq6passo4}%
\end{equation}

Let $\varphi_{2}(  t,x)  :=\xi_{2}(  t)\varphi(
t,x)  $ with
\[
\xi_{2}=\theta^{-1}\xi^{-\frac{5}{2}%
}\text{.}%
\]
Then $v_{2}$ satisfies system \eqref{eq4passo4} with $f_{1}$ replaced by%
\[
f_{2}:=\xi_{2t}\xi_{1}^{-1}\varphi_{1}\text{.}%
\]

Observe that $\left\vert \theta_{2t}\theta_{1}^{-1}\right\vert \leq C\lambda$. Thus,
we obtain
\begin{equation}
\left\Vert f_{2}\right\Vert _{L^{2}(  0,T;H^{5/3}(  \Omega)
)  }\leq C\lambda \left\Vert \varphi_{1}\right\Vert _{L^{2}(  0,T;H^{5/2}(
\Omega)  )  }\text{.} \label{eq7passo4}%
\end{equation}
Now, by using that $\varphi_{2}$ belongs to $L^2\left(0,T;H^4(\Omega)\right)$ and $L^{\infty}\left(0,T;H^2(\Omega)\right)$,  thanks to \eqref{regularidadeeqlinearizado}, and interpolating  these two spaces, we have that $$\varphi_{2}\in L^{2}(
0,T;H^{11/3}(  \Omega)  )  \cap L^\infty (  0,T;H^{8/3}  (  \Omega)  )  $$ with
\begin{equation}
\left\Vert \varphi_{2}\right\Vert _{L^{2}(  0,T;H^{11/3}(  \Omega)
)  \cap L^{\infty}(  0,T;H^{8/3}(  \Omega)  )  }\leq
C\left\Vert f_{2}\right\Vert _{L^{2}(  0,T;H^{5/3}(  \Omega)
)  }\text{.} \label{eq8passo4}%
\end{equation}
Thus we infer from \eqref{eq6passo4}--\eqref{eq8passo4}, the following
\begin{equation}\label{eq9passo4}
\begin{split}
\left\Vert \varphi_{2}\right\Vert ^2_{L^{2}(  0,T;H^{11/3}( \Omega) )  }
\leq&\  C_1\lambda || f_1||^2_{ L^2( Q_T ) }  \\
\leq&\  C_{2} {\displaystyle\int_{Q_T}}  
\left( \lambda^{3}\xi^{3}  |\varphi| ^{2}+ 
\lambda^{4} |\varphi_{x}|^{2}+\lambda^2  |\varphi_{xx}|^{2}+|\varphi_{xxx}|^{2}\right)  \theta^{-2} dxdt.
\end{split}
\end{equation}
Hence, replacing $\varphi_2 =\theta^{-1}\varphi^{-\frac{9}{2}}$ in \eqref{eq9passo4}, for some constant $C_3>0$,  yields that
\begin{equation}\label{eq10passo4}
\begin{split}
\int_{0}^{T} (\xi^{-9}\theta^{-2})(t,L)&\left\Vert
\varphi(  t,\cdot)  \right\Vert _{H^{11/3}(  \tilde{\omega}_T)  } ^{2}dt\\
\leq& \  C_{3} \int_{Q_T} \left( \lambda^{3}\xi^{3}  |\varphi| ^{2}+ 
\lambda^{4} |\varphi_{x}|^{2}+\lambda^2  |\varphi_{xx}|^{2}+|\varphi_{xxx}|^{2}\right)  \theta^{-2} dxdt.
\end{split}
\end{equation}
Note that analogously we can infer the same relation for $\psi$, that is, 
\begin{equation}\label{eq11passo4}
\begin{split}
\int_{0}^{T}(\xi^{-9} \theta^{-2})(t,L)&\left\Vert
\psi(  t,\cdot)  \right\Vert _{H^{11/3}( \tilde{\omega}_T)  } ^{2}dt\\
\leq&  \ C_{3} \int_{Q_T} \left( \lambda^{3}\xi^{3}  |\psi| ^{2}+ 
\lambda^{4} |\psi_{x}|^{2}+\lambda^2  |\psi_{xx}|^{2}+|\psi_{xxx}|^{2}\right)  \theta^{-2} dxdt.
\end{split}
\end{equation}
Therefore, adding  \eqref{eq10passo4} and \eqref{eq11passo4}, putting in \eqref{eq3passo4} and, finally, comparing with \eqref{eq19passo2}, for $\lambda$ and $\mu$ large enough, yields that,   
\begin{equation}\label{eq12passo4}
\begin{split}
&\iint_{Q_T} \left( \lambda^7\mu^8\xi^7 \theta^2 |\varphi|^2+\lambda^5\mu^6\xi^5 \theta^2 |\varphi_{x}|^2+\lambda^3\mu^4\xi^3 \theta^2 |\varphi_{xx}|^2+\lambda\mu^2\xi \theta^2 |\varphi_{xxx}|^2 \right) dxdt \\& +\iint_{Q_T} \left( \lambda^7\mu^8\xi^7 \theta^2 |\psi|^2+\lambda^5\mu^6\xi^5 \theta^2 |\psi_{x}|^2+\lambda^3\mu^4\xi^3 \theta^2 |\psi_{xx}|^2+\lambda\mu^2\xi \theta^2 |\psi_{xxx}|^2 \right) dxdt \\& \quad \quad\leq  C\left( \iint_{Q_T} \theta^2(|g^0|^2+|g^1|^2) dxdt +\lambda  \iint_{\tilde{\omega}_T}\xi \theta^2|\varphi|^2 dxdt\right),
\end{split}
\end{equation}
since $|(\xi^{255} \theta^{86})(t,L)|$ is bounded in terms of $t\in[0,T]$ due to the choices in \eqref{weight}, what guarantees \eqref{eqcarleman}, and so the Carleman is shown.
\end{proof}

\section{Null controllability results}\label{Sec3}

In this section, we prove the existence of insensitizing controls for the linearized system \eqref{eq:linearized_systeminversa}. First, we need to obtain an estimate as in Theorem \ref{propcarleman},  with weights that remain bounded as $t \rightarrow T$, i.e., have blow-up only in $t =0$.  For this purpose we introduce the new weights

\begin{equation}\label{pesosnovos}
\begin{array}{llll}
\displaystyle \sigma =e^m, &\displaystyle  \nu(t,x) =  \frac{e^{3\mu \eta(x)}}{\gamma(t)} & \text{and} &\displaystyle m(t,x) = \lambda \frac{e^{3\mu \eta(x)}- e^{5\mu ||\eta||_{\infty}}}{\gamma(t)}, \\ 
\displaystyle \sigma^* =e^{m^*},&   \displaystyle\nu^*(t) = \min_{x\in \overline{\Omega}} \nu(x,t) &   \text{and}  &\displaystyle m^*(t) = \min_{x\in \overline{\Omega}} \nu(x,t),\\
\displaystyle \hat \sigma =e^{\hat m }, & \displaystyle\hat \nu (t) = \max_{x\in \overline{\Omega}} \nu(x,t) & \text{and} &\displaystyle \hat m (t) = \max_{x\in \overline{\Omega}} \nu(x,t), \\
\end{array}
\end{equation}
where $\gamma$ is given by

\begin{equation}
    \gamma(t) = \left\{ \begin{array}{ll}
    t(T-t), & 0 \leq t \leq T/2, \\ 
    T^2/4, & T/2< t\leq T.
    \end{array}\right.
\end{equation}
Combining Carleman estimate \eqref{eqcarleman} with classical energy estimates for the fourth order Schrödinger system, satisfied by $\varphi$ and $\psi$, we can prove the following result.

\begin{proposition}\label{propenergia} With the hypothesis of Proposition \ref{propcarleman} the solution $(\varphi,\psi)$ of \eqref{adjuntoinversa} satisfies the following 
\begin{equation*}
\begin{split}
    ||\varphi||_{L^2(T/2,T;L^2(\Omega))}&+||\varphi_x||_{L^2(T/2,T;L^2(\Omega))} +||\varphi_{xx}||_{L^2(T/2,T;L^2(\Omega))}\\ \leq &\ ||(\varphi,\psi)||_{(L^2(T/4,T/2;L^2(\Omega)))^2} +  ||(g^0,g^1)||_{(L^2(T/2,T;L^2(\Omega)))^2}
    \end{split}
\end{equation*}
and
\begin{equation*}
\begin{split}
    ||\psi||_{L^2(T/2,T;L^2(\Omega))}&+||\psi_x||_{L^2(T/2,T;L^2(\Omega))} +||\psi_{xx}||_{L^2(T/2,T;L^2(\Omega))}\\ \leq &\ ||\psi||_{L^2(T/4,T/2;L^2(\Omega))} +  ||g^1||_{L^2(T/2,T;L^2(\Omega))}.
    \end{split}
\end{equation*}
\end{proposition}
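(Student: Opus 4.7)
The plan is to combine a time-localization via a cutoff function with the classical $H^2$ energy estimate for the linear fourth-order Schr\"odinger operator summarized in Appendix \ref{Sec1}. Choose $\chi \in C^\infty([0,T])$ with $\chi \equiv 0$ on $[0,T/4]$, $\chi \equiv 1$ on $[T/2,T]$ and $0 \le \chi \le 1$, so that $\chi'$ is supported in $[T/4,T/2]$. The idea is to apply this cutoff to each unknown, producing a new function with vanishing initial (resp.\ terminal) data, for which the forward (resp.\ backward) $L^\infty_t H^2_x$ bound is available with the inhomogeneity absorbing the commutator $i\chi'\psi$ or $i\chi'\varphi$ supported in $[T/4,T/2]$.

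Since the $\psi$-equation in \eqref{adjuntoinversa} is decoupled from $\varphi$, we handle $\psi$ first. Set $\tilde\psi := \chi\psi$ and observe that
\begin{equation*}
\left\{
\begin{array}{l}
i\tilde\psi_t + \tilde\psi_{xx} - \tilde\psi_{xxxx} = \chi g^1 + i\chi'\psi,\quad \text{in } Q_T,\\
\tilde\psi(t,0)=\tilde\psi(t,L)=\tilde\psi_x(t,0)=\tilde\psi_x(t,L)=0,\\
\tilde\psi(0,x)=0,
\end{array}
\right.
\end{equation*}
because $\chi(0)=0$. The forward well-posedness in $L^\infty(0,T; H^2\cap H_0^2)$ from Appendix \ref{Sec1} yields
\begin{equation*}
\|\tilde\psi\|_{L^\infty(0,T; H^2(\Omega))} \le C\bigl(\|\chi g^1\|_{L^2(0,T;L^2(\Omega))} + \|\chi'\psi\|_{L^2(0,T;L^2(\Omega))}\bigr).
\end{equation*}
Since $\chi\equiv 1$ on $[T/2,T]$, $\chi'$ is supported in $[T/4,T/2]$, and $L^\infty([T/2,T])\hookrightarrow L^2([T/2,T])$, restricting to $[T/2,T]$ and using $|\chi|,|\chi'|\le C$ gives the desired estimate for $\psi$, $\psi_x$ and $\psi_{xx}$.

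For $\varphi$, set $\tilde\varphi := \chi\varphi$, which satisfies
\begin{equation*}
\left\{
\begin{array}{l}
i\tilde\varphi_t + \tilde\varphi_{xx} - \tilde\varphi_{xxxx} = \chi\,1_{\mathcal O}\psi + \chi g^0 + i\chi'\varphi,\quad \text{in } Q_T,\\
\tilde\varphi(t,0)=\tilde\varphi(t,L)=\tilde\varphi_x(t,0)=\tilde\varphi_x(t,L)=0,\\
\tilde\varphi(T,x)=0,
\end{array}
\right.
\end{equation*}
the terminal condition following from $\chi(T)=1$ and $\varphi(T,\cdot)=0$. The same well-posedness result, now applied backward in time from $t=T$, produces
\begin{equation*}
\|\tilde\varphi\|_{L^\infty(0,T;H^2(\Omega))}\le C\bigl(\|\chi g^0\|_{L^2} + \|\chi\,1_{\mathcal O}\psi\|_{L^2} + \|\chi'\varphi\|_{L^2}\bigr),
\end{equation*}
and the $\|\chi\,1_{\mathcal O}\psi\|_{L^2([T/2,T];L^2)}$ term is absorbed by the bound on $\psi$ already obtained in the previous step, while $\chi'\varphi$ is supported on $[T/4,T/2]$ as required.

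The main technical obstacle is justifying the $L^\infty_t H^2_x$ regularity estimate with the clamped boundary conditions $\tilde\psi(t,0)=\tilde\psi_x(t,0)=\tilde\psi(t,L)=\tilde\psi_x(t,L)=0$: because the Schr\"odinger flow does not regularize in space, one must exploit the self-adjoint positivity of $P:=\partial_x^4-\partial_x^2$ on $H^2\cap H_0^2$ and the (formal) conservation in time of $\|P^{1/2}\tilde\psi\|_{L^2}$, up to a Duhamel-type contribution from the source. Once this $H^2$ bound is established—as developed in the appendix—the restriction to $[T/2,T]$ and the bookkeeping of the supports of $\chi$ and $\chi'$ deliver the claimed estimates.
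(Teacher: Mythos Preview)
Your approach is essentially the paper's: both introduce a time cutoff $\kappa$ (your $\chi$) vanishing on $[0,T/4]$ and equal to $1$ on $[T/2,T]$, derive the $H^2$ energy bound for $\kappa\psi$ and $\kappa\varphi$ (the paper by multiplying the equation by $\kappa\varphi$ and integrating by parts, you by invoking the $L^\infty_t H^2_x$ well-posedness, which amounts to the same computation), and read off the claim from the supports of $\kappa$ and $\kappa'$. The only omission is that the source $\chi\,1_{\mathcal O}\psi$ in the $\tilde\varphi$-equation is also nonzero on $[T/4,T/2]$, but that contribution is bounded by $\|\psi\|_{L^2(T/4,T/2;L^2(\Omega))}$, which already appears on the right-hand side of the first estimate, so the argument goes through.
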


\begin{proof}
Let us consider $\kappa \in C^1([0,T])$ such that 
$$\kappa  = \left\{ 
\begin{array}{l}
0, \quad \text{if} \ t \in [0,T/4], \\
1, \quad \text{if} \ t \in[T/2,T].
\end{array}\right.$$
Note that if $(\varphi,\psi)$ is a solution for \eqref{adjuntoinversa} , so $(\kappa\varphi, \kappa \psi)$ satisfies the following system
\begin{equation}\label{adjuntoregular}
	\left\{\begin{array}{lll}
      i (\kappa\varphi)_t  +  (\kappa\varphi)_{xx} - (\kappa\varphi)_{xxxx} = 1_{\mathcal{O}}(\kappa\psi) +\kappa g^0 +i\kappa_t \varphi, &\text{in}& Q_T,\\
      i (\kappa\psi)_t  +  (\kappa\psi)_{xx}  - (\kappa\psi)_{xxxx} = \kappa g^1 +i\kappa_t \psi,   &\text{in}& Q_T,\\
      (\kappa\varphi)(t,0) = (\kappa\varphi)(t,L) =  (\kappa\varphi)_x(t,0) = (\kappa\varphi)_x(t,L) = 0,  &\text{on}& t \in (0,T), \\
        \psi(t,0) = (\kappa\psi)(t,L) = (\kappa\psi)_x(t,0) = (\kappa \psi)_x(t,L) = 0, &\text{on}& t \in (0,T), \\
      (\kappa\varphi)(T,x)=0, (\kappa\psi)(T,x) =   0, &\text{in}&\Omega.
	\end{array} \right.
	\end{equation}

Now, since $\kappa,\kappa_t \in C([0,T])$ and $C([0,T]) \hookrightarrow L^{\infty}(0,T)$, moreover,  
$\kappa \psi, \kappa_t \psi \in  L^2(0,T;H^2_0(\Omega))$. Then, for $g^1 \in L^2(0,T;H^2_0(\Omega))$ we get that $\kappa \psi$ to satisfy a fourth-order Schr\"odinger system equation with null data and right-hand side in $L^2(0,T;H^2_0(\Omega))$. Therefore, we get that
\begin{equation}\label{eq1energia}
\begin{split}
    \int_{\Omega} |\kappa(t) \psi(t)|dx + \iint_{Q_T} |\kappa \psi_x|dxdt &+ \iint_{Q_T} |\kappa \psi_{xx}|dxdt \\ \leq C &\left(\iint_{Q_T} |\kappa g^1|dxdt + \iint_{Q_T}|\kappa_t \psi|dxdt \right).
        \end{split}
\end{equation}

Multiplying the first equation of \eqref{adjuntoregular} by $\kappa\varphi$ and integrating over $\Omega$ we obtain, after taking the real part and using Young inequality for the integral term of $1_{\mathcal{O}}\kappa \psi \kappa \varphi$, that \begin{equation}\label{eq2energia}
\begin{split}
    -\frac{1}{2}\frac{d}{dt} \int_{\Omega} |\kappa(t)\varphi(t)|^2&+ \int_{\Omega} |\kappa \varphi_x|^2 dx + \int_{\Omega} |\kappa \varphi_{xx}|^2 dx \\ \leq &\ C\left(\int_{\Omega}|\kappa g^0|^2 dx+ \int_{\Omega} |\kappa_t \varphi|^2 dx + \int_{\Omega}|\kappa \psi|^2 dx\right) +\delta \int_{\mathcal{O}} |\kappa \varphi|^2 dx.
    \end{split}
\end{equation}
Finally, integrating \eqref{eq2energia} in $[t,T]$, combining with \eqref{eq1energia} and taking $\delta$ small enough, we get 
\begin{equation}\label{eq3energia}
\begin{split}
   \int_{\Omega} |\kappa(t)\varphi(t)|^2dx+&\iint_{Q_T} |\kappa \varphi_x|^2 dxdt + \iint_{Q_T} |\kappa \varphi_{xx}|^2 dxdt \\ \leq &\ C\left(\iint_{Q_T}|\kappa|^2 (g^0|^2+ |g^1|)^2 dxdt+ \iint_{Q_T} |\kappa_t|^2( \varphi|^2+|\psi|^2) dxdt \right).
    \end{split}
\end{equation}
Therefore, Proposition \ref{propenergia} is a consequence of equations \eqref{eq1energia} and \eqref{eq3energia}.
\end{proof}

As a consequence of the previous result, and due to the definition of \eqref{pesosnovos}, the following Carleman estimate, with new weight functions $\sigma$ and $\nu$, can be obtained. 

\begin{proposition}\label{propcarlemannovo} There exists a constant $C(s,\lambda):=C>0$, such that every solution $(\varphi,\psi)$ of \eqref{adjuntoinversa} satisfies
\begin{equation}\label{eqcarlemanmodificada}
\begin{split}
\iint_{Q_T} &\left( \lambda^7\mu^8\nu^7 \sigma^2 |\varphi|^2+\lambda^5\mu^6\nu^5 \sigma^2 |\varphi_{x}|^2+\lambda^3\mu^4\nu^3 \sigma^2 |\varphi_{xx}|^2 \right) dxdt \\ &+\iint_{Q_T} \left( \lambda^7\mu^8\nu^7 \sigma^2 |\psi|^2+\lambda^5\mu^6\nu^5 \sigma^2 |\psi_{x}|^2+\lambda^3\mu^4\nu^3 \sigma^2 |\psi_{xx}|^2\right) dxdt \\  \leq &\ C\left( \iint_{Q_T} \sigma^2(|g^0|^2+|g^1|^2) dxdt +\lambda  \iint_{\tilde{\omega}_T}\nu \sigma^2|\varphi|^2 dxdt\right) .
    \end{split}
\end{equation}
\end{proposition}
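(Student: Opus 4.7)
The plan is to bootstrap from the original Carleman estimate \eqref{eqcarleman} together with the energy estimates of Proposition \ref{propenergia}, exploiting the fact that the new weights $\sigma,\nu$ coincide with $\theta,\xi$ on the left half of the time interval and are bounded on the right half.

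First I would split the time interval as $(0,T)=(0,T/2)\cup(T/2,T)$ and analyse the two pieces separately. On $(0,T/2)$ we have $\gamma(t)=t(T-t)$, so the new weights agree with the old ones: $\nu(t,x)=\xi(t,x)$ and $\sigma(t,x)=\theta(t,x)$. Hence the contribution to the left-hand side of \eqref{eqcarlemanmodificada} coming from $\Omega\times(0,T/2)$ is bounded by the left-hand side of \eqref{eqcarleman} (after dropping the $|\varphi_{xxx}|^2$, $|\psi_{xxx}|^2$ terms, which do not appear in the new statement). On this piece we can directly apply Theorem \ref{propcarleman} to absorb it into $\iint_{Q_T}\theta^2(|g^0|^2+|g^1|^2)\,dxdt+\lambda\iint_{\widetilde\omega_T}\xi\theta^2|\varphi|^2\,dxdt$, and since $\theta\le C\sigma$ and $\xi\le C\nu$ on $(0,T/2)$ these are in turn dominated by the admissible right-hand side in \eqref{eqcarlemanmodificada}.

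On $(T/2,T)$, the key observation is that $\gamma(t)\equiv T^2/4$ is constant, so $\sigma$ and $\nu$ are uniformly bounded above and below by positive constants depending only on $\lambda,\mu,T,\Omega$. Therefore the left-hand side of \eqref{eqcarlemanmodificada} restricted to $\Omega\times(T/2,T)$ is controlled by
\begin{equation*}
C\bigl(\|\varphi\|_{L^2(T/2,T;H^2(\Omega))}^2+\|\psi\|_{L^2(T/2,T;H^2(\Omega))}^2\bigr),
\end{equation*}
and this is precisely the quantity bounded by Proposition \ref{propenergia} in terms of $\|(\varphi,\psi)\|_{L^2(T/4,T/2;L^2(\Omega))}^2$ plus $\|(g^0,g^1)\|_{L^2(T/2,T;L^2(\Omega))}^2$.

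Finally, I would absorb the term $\|(\varphi,\psi)\|_{L^2(T/4,T/2;L^2(\Omega))}^2$ back into the already-estimated first piece. On the subinterval $[T/4,T/2]$, the weight $\xi^7\theta^2$ is bounded below by a strictly positive constant (depending on $\lambda,\mu,T$), so
\begin{equation*}
\iint_{\Omega\times(T/4,T/2)}(|\varphi|^2+|\psi|^2)\,dxdt\ \le\ C\iint_{\Omega\times(T/4,T/2)}\lambda^7\mu^8\xi^7\theta^2(|\varphi|^2+|\psi|^2)\,dxdt,
\end{equation*}
and the latter is controlled by \eqref{eqcarleman}, hence ultimately by the right-hand side of \eqref{eqcarlemanmodificada} (after noting again $\theta\le C\sigma$, $\xi\le C\nu$ on $(0,T/2)$). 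Adding the two pieces gives \eqref{eqcarlemanmodificada}. The main obstacle, minor but technical, is to carefully check that all constants arising from the boundedness of $\sigma,\nu$ on $[T/2,T]$ and from the comparison $\theta\le C\sigma$ on $[0,T/2]$ depend only on $\lambda,\mu,T,\Omega$ and not on the solution, so that the resulting estimate is genuinely of the form \eqref{eqcarlemanmodificada}; once this bookkeeping is done, the argument is essentially a concatenation of Theorem \ref{propcarleman} and Proposition \ref{propenergia}.
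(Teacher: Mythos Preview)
Your proposal is correct and follows essentially the same route as the paper: split $(0,T)$ at $T/2$, use that $\nu=\xi$, $\sigma=\theta$ on $(0,T/2)$, invoke Proposition~\ref{propenergia} on $(T/2,T)$, and reabsorb the $(T/4,T/2)$ piece via the lower bound on $\xi^7\theta^2$ there. One small imprecision: when you pass from the right-hand side of \eqref{eqcarleman} to that of \eqref{eqcarlemanmodificada}, the integrals are over all of $Q_T$ and $\widetilde\omega_T$, so you need $\theta^2\le C\sigma^2$ and $\xi\theta^2\le C\nu\sigma^2$ on the whole cylinder, not just on $(0,T/2)$; this does hold (on $(T/2,T)$ the exponential decay of $\theta$ beats the polynomial blow-up of $\xi$, while $\nu\sigma^2$ is bounded below), and is exactly the content of the paper's inequality \eqref{eq3novopeso}.
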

\begin{proof}The result is consequence of Proposition \ref{propenergia}. Indeed, noting that  $\xi=\nu$ and $l=m$, for $t \in [0,T/2]$, and since $l$ is constant in $[T/2,T]$, yields that 
\begin{equation}\label{eq1novopeso}
\begin{split}
\int_{0}^{T/2}\int_{\Omega} &\left( \lambda^7\mu^8\nu^7 \sigma^2 |\varphi|^2+\lambda^5\mu^6\nu^5 \sigma^2 |\varphi_{x}|^2+\lambda^3\mu^4\nu^3 \sigma^2 |\varphi_{xx}|^2 \right) dxdt \\
&+\int_{0}^{T/2}\int_{\Omega} \left( \lambda^7\mu^8\nu^7 \sigma^2 |\psi|^2+\lambda^5\mu^6\nu^5 \sigma^2 |\psi_{x}|^2+\lambda^3\mu^4\nu^3 \sigma^2 |\psi_{xx}|^2\right) dxdt \\ 
=&\int_{0}^{T/2}\int_{\Omega} \left( \lambda^7\mu^8\xi^7 \theta^2 |\varphi|^2+\lambda^5\mu^6\xi^5 \theta^2 |\varphi_{x}|^2+\lambda^3\mu^4\xi^3 \theta^2 |\varphi_{xx}|^2 \right) dxdt \\ &+\int_{0}^{T/2}\int_{\Omega} \left( \lambda^7\mu^8\xi^7 \theta^2 |\psi|^2+\lambda^5\mu^6\xi^5 \theta^2 |\psi_{x}|^2+\lambda^3\mu^4\xi^3 \theta^2 |\psi_{xx}|^2 \right) dxdt.
    \end{split}
\end{equation}
Additionally, for $t\in [T/2,T]$, we have that
\begin{equation}\label{eq2novopeso}
\begin{split}
\int_{T/2}^{T}\int_{\Omega} &\left( \lambda^7\mu^8\xi^7 \theta^2 |\varphi|^2+\lambda^5\mu^6\xi^5 \theta^2 |\varphi_{x}|^2+\lambda^3\mu^4\xi^3 \theta^2 |\varphi_{xx}|^2\right) dxdt \\ &+\int_{T/2}^{T}\int_{\Omega} \left( \lambda^7\mu^8\xi^7 \theta^2 |\psi|^2+\lambda^5\mu^6\xi^5 \theta^2 |\psi_{x}|^2+\lambda^3\mu^4\xi^3 \theta^2 |\psi_{xx}|^2\right) dxdt \\
\leq &\ C\left(\int_{T/2}^{T}\int_{\Omega}(|\varphi|^2+|\varphi_{x}|^2+|\varphi_{xx}|^2+|\psi|^2+|\psi_x|^2+|\psi_{xx}|^2) dxdt \right) \\
\leq &\ C\left( \int_{T/4}^{T/2}\int_{\Omega} (|\varphi|^2 + |\psi|^2 ) dxdt + \int_{T/2}^{T} \int_{\Omega} (|g^0|^2+|g^1|^2)dxdt\right)
\\
\leq &\ C\left( \int_{T/4}^{T/2}\int_{\Omega}\xi^7\theta^2 (|\varphi|^2 + |\psi|^2 ) dxdt + \int_{T/2}^{T} \int_{\Omega} \sigma^2 (|g^0|^2+|g^1|^2)dxdt\right),
    \end{split}
\end{equation}
thanks to Proposition \ref{propenergia}.

Finally,  we note that 
\begin{equation}\label{eq3novopeso}
\begin{split}
 \iint_{Q_T} &\theta^2 (|g^0|^2+|g^1|^2) dxdt + \lambda  \iint_{\tilde{\omega}_T}  \xi \theta^2|\varphi|^2 dxdt \\ \leq &\ C\left( \iint_{Q_T} \sigma^2(|g^0|^2+|g^1|^2) dxdt +\lambda  \iint_{\tilde{\omega}_T}\nu \sigma^2|\varphi|^2 dxdt\right) .
    \end{split}
\end{equation}
Thus, the result follows from \eqref{eqcarleman}, \eqref{eq1novopeso}, \eqref{eq2novopeso} and \eqref{eq3novopeso}. 
\end{proof}

\begin{remark}We point out that Proposition \ref{propenergia} holds by taking the minimum of the weights on the left-hand side and maximum of the weights on the right-hand side of \eqref{eq1energia}.
\end{remark}

\subsection{Null controllability: Linear case}In what follows we use \eqref{eqcarlemanmodificada}, from Proposition \ref{propcarlemannovo}, to deduce the desired null controllability property. Denote  $\mathcal{L}=\mathcal{L}^* = i\partial_t + \partial_{xx} - \partial_{xxxx}$ and introduce the following space
\begin{equation*}
\begin{split}
    \mathcal{C} = &\left\{(u,v,h); (\hat{\sigma})^{-1}u \in L^2(Q_T),(\hat{\sigma})^{-1}v\in L^2(0,T;H^{-2}(\Omega)), (\hat{\nu})^{-\frac{1}{2}}(\hat{\sigma})^{-1}h \in L^2(q_T), \right. \\ &\left. (\nu^*)^{-\frac{7}{2}}(\sigma^{*})^{-1}(\mathcal{L}u - 1_{\omega}h) \in L^2(Q_T), (\nu^*)^{-\frac{7}{2}}(\sigma^{*})^{-1}(\mathcal{L^{*}}v - 1_{\mathcal{O}}u) \in L^2(0,T;H^{-2}(\Omega)), \right. \\& \left. (\hat{\nu})^{-2}\hat{\sigma}^{-1}u \in L^2(0,T;H^4(\Omega))\cap L^{\infty}(0,T;H^2_0(\Omega)),\right.\\&\left. (\hat{\nu})^{-2}\hat{\sigma}^{-1}v \in L^2(0,T;H^2_0(\Omega))\cap L^{\infty}(0,T;L^2(\Omega)),\  v|_{t=T}=0\  \text{in $\Omega$}\right\}.
    \end{split}
\end{equation*}
\begin{remark}
It is important to observe that $\mathcal{C}$ is a Banach space endowed with its natural norm. Additionally, as consequence from the definition of the set $\mathcal{C}$, an element $(u,v,h) \in \mathcal{C}$ is such that $v|_{t=0} = 0$ in $\Omega$. This holds since $(\hat{\nu})^{-2}\hat{\sigma}^{-1}v$ belongs to $ L^{\infty}(0,T;L^2(\Omega))$ and $(\hat{\nu})^{-2}\hat{\sigma}^{-1}$ blow-up only at $t =0$.  
\end{remark}

We are now in a position to prove the null controllability property for solutions of  \eqref{eq:linearized_systeminversa}. The result can be read as follows.

\begin{theorem}\label{controlabilidadelinearizado} Assume the same hypothesis of Proposition \ref{propcarlemannovo}. Additionally, consider 
\begin{equation}\label{eq_control_linear}(\nu^*)^{-\frac{7}{2}}(\hat{\sigma})^{-1} f^0\in L^2(Q_T)\quad \text{and} \quad (\nu^*)^{-\frac{7}{2}}(\hat{\sigma})^{-1} f^1 \in L^2(0,T;H^{-2}(\Omega)).
\end{equation} 
Therefore, we can find a control $h(x,t)=h$ such that the associated solution $(u,v)$ of
\begin{equation}\label{eq:linearized_systeminversa_a}
	\left\{\begin{array}{lll}
      i  u_t  +  u_{xx} - u_{xxxx}  = f^0 + 1_{\omega}h,  &\text{in}& Q_T,\\
      i v_t  + v_{xx} - v_{xxxx} = f^1 +1_{\mathcal{O}}u,   &\text{in}& Q_T,\\
      u(t,0) = u(t,L) = v(t,0) = v(t,L) = 0, &\text{on}& t \in (0,T),\\ 
      u_x(t,0) =  u_x(t,L) = v_x(t,0) = v_x(t,L) = 0,&\text{on}& t \in (0,T), \\
      u(0,x)=u_0(x), v(T,x) = 0, &\text{in}&\Omega,
	\end{array} \right.
	\end{equation}
satisfies $(u,v,h) \in \mathcal{C}$. In particular, $v|_{t=0}\equiv 0$ in $\Omega$.
\end{theorem}

\begin{proof}
We introduce the following spaces 
\begin{equation*}
R_0 =\{u\in H_0^2(\Omega); \ u_{xxxx}\in H_0^2(\Omega) \},
\end{equation*}
\begin{equation*}
Y_0=C([0,T];H^4(\Omega))\cap C^1([0,T];H^2_0(\Omega))
\end{equation*}
and
\begin{equation*}
Y_1=C([0,T];H_0^2(\Omega))\cap C^1([0,T];H^{-2}(\Omega)).
\end{equation*}
Also, let us consider 
\begin{equation*}
\begin{split}
    P_0 =\{(\varphi, \psi)) \in Y_1\times Y_0: \mathcal{L^{*}}v - 1_{\mathcal{O}}u\in L^2(Q_T)\}.
    \end{split}
\end{equation*}
Thanks to Theorem \ref{regular}, $P_0$ is nonempty. Moreover, from now on we will use $\mathcal{L}$ instead of $\mathcal{L^*}$, since both are equal.

Now, define the bilinear form $a:P_0\times P_0\to \mathbb{R}$ by
\begin{equation*}
\begin{split}
    a((\tilde{\varphi},\tilde{\psi}),(\varphi, \psi)):=& \ Re\left(\iint_{Q_T}(\hat \sigma)^2(\mathcal{L}\hat{\varphi} - 1_{\mathcal{O}}\hat{\psi})(\overline{\mathcal{L} \varphi - 1_{\mathcal{O}} \psi}) dxdt+\iint_{Q_T}(\hat \sigma)^2(\mathcal{L}\hat{\psi} )(\overline{\mathcal{L} \psi}) dxdt\right)\\ &+Re\left(\iint_{\tilde{\omega}_T} \hat \nu (\hat \sigma)^2  \hat{\varphi}\overline{\varphi} dxdt \right),
    \end{split}
\end{equation*}
and the linear form $G:P_0\to \mathbb{R}$ given by
\begin{equation*}
    \langle G, (\varphi,\psi) \rangle := Re\iint_{Q_T} f^0 \overline{\varphi}dxdt +\int_0^T\left<f^1,\overline{\psi}\right>dt,
\end{equation*}
where $\left<\cdot,\cdot\right>$ denotes the duality between $H^{-2}(\Omega)$ and $H^2_0(\Omega)$.
Thanks to Proposition \ref{propcarlemannovo}, the bilinear form over $P_0\times P_0$, is sesquilinear, positive, and coercive. Let $P$ be the completion of $P_0$ with the norm induced by $a(\cdot, \cdot)$, in this case, $P$ is a Hilbert space and $a(\cdot, \cdot)$ is well-defined, continuous and coercive bilinear form on $P\times P$. Now, by assumption \eqref{eq_control_linear} and also by Carleman estimate in Proposition \ref{propcarlemannovo}, note that for all $(\varphi, \psi) \in P_0$ we have
\begin{equation*}
\begin{split}
\langle G, (\varphi,\psi) \rangle =& \ Re\iint_{Q_T} f^0 \overline{\varphi}dxdt +\int_0^T\left<f^1,\overline{\psi}\right>dt\\
\leq &\left( \iint_{Q_T} (\nu^*)^7 (\sigma^*)^2(|\varphi|^2 + |\psi|^2) dxdt\right)^{1/2}\\ &\times \left(\iint_{Q_T} (\nu^*)^{-7} (\sigma^*)^{-2}|f^0|^2dxdt + \int_0^T (\nu^*)^{-7} (\sigma^*)^{-2}||f^1||^2_{H^{-2}(\Omega)}dt \right)^{1/2} \\ 
\leq &\ Ca((\varphi,\psi),(\varphi,\psi))^{1/2}\left(\iint_{Q_T} (\nu^*)^{-7} (\sigma^*)^{-2}|f^0|^2dxdt\right. \\&+ \left.\int_0^T (\nu^*)^{-7} (\sigma^*)^{-2}||f^1||^2_{H^{-2}(\Omega)}dt\right)^{1/2},
    \end{split}
\end{equation*}
where we use Young inequality on the first inequality. Hence, $G$ is a bounded functional on $P_0$ and we can extend it continuously to a bounded functional on $P$ due to the Hahn-Banach theorem. Therefore, from the fact that $G$ is a bounded functional on $P_0$ and  $a(\cdot, \cdot)$ is a well-defined, continuous, and coercive bilinear form on $P\times P$, we can use the Lax-Milgram's lemma to conclude that the following variational problem
\begin{equation}\label{eq5linear}
    a((\hat{\varphi},\hat{\psi}),(\varphi, \psi))= \langle G, (\varphi,\psi) \rangle, \quad\forall (\varphi,\psi)\in P,
\end{equation}
has a unique solution $(\hat{\varphi},\hat{ \psi})\in P\times P$.\color{black} 

Let us define $(\hat{u},\hat{v},\hat{h})$ by
\begin{equation}\label{eq6linear}
\left\{\begin{array}{ll}
    \hat{u} = (\hat{\sigma})^2(\mathcal{L}\hat{\varphi} - 1_{\mathcal{O}}\hat{\psi}), & \text{in $Q_T,$} \\
    \hat{v} = (\hat{\sigma})^2\mathcal{L}\hat{\psi}, & \text{in $Q_T,$} \\
    \hat{h} = -\hat\nu(\hat{\sigma})^2  \hat{\varphi} , & \text{in $Q_T,$}
\end{array}\right.
\end{equation}
remembering that $\mathcal{L}^{\star}=\mathcal{L}$. Thanks to \eqref{eq5linear} and \eqref{eq6linear}, we have that
\begin{equation*}
    \iint_{Q_T}(\hat{\sigma})^{-2}\left(|\hat{u}|^2 + |\hat v|^2 +  (\hat{\nu})^{-1}|\hat{h}|^2\right) dxdt = a\left((\hat\varphi, \hat\psi),(\hat\varphi, \hat\psi)\right)< \infty.
\end{equation*}
Considering $(\tilde u, \tilde v)$ be a weak solution of
\begin{equation}\label{eq8linear}
	\left\{\begin{array}{lll}
      i  \tilde u_t  +  \tilde u_{xx} - \tilde u_{xxxx}  = f^0 + 1_{\omega}\tilde h,  &\text{in}& Q_T,\\
      i\tilde  v_t  +\tilde  v_{xx} - \tilde v_{xxxx} = f^1 +1_{\mathcal{O}}\tilde u,   &\text{in}& Q_T,\\
      \tilde u(t,0) = \tilde u(t,L) = \tilde v(t,0) = \tilde v(t,L) = 0,&\text{on}& t \in (0,T),\\  \tilde u_x(t,0) =  \tilde u_x(t,L) = \tilde v_x(t,0) = \tilde v_x(t,L) = 0,&\text{on}& t \in (0,T), \\
\tilde       u(0,x)=0,\tilde  v(T,x) = 0, &\text{in}&\Omega,
	\end{array} \right.
\end{equation}
with control $h= \hat h$ and source terms $f^0$ and $f^1$, since $\tilde h \in L^2(q_T)$, we have, from well-posed result given by Theorem \ref{regular}, that $(\tilde u, \tilde v)$ are well defined.
In the following, we prove that the weak solution $(\hat u , \hat u)$ is a solution by transposition. In fact,  for every $(\varphi, \psi) \in P_0$, it holds from \eqref{eq5linear} and \eqref{eq6linear} that 
\begin{equation}\label{eq9linear}
\begin{split}
&Re\iint_{Q_T}  f^0\overline{ \varphi }dxdt + \int_0^T \left<f^1, \overline{\psi}    \right>_{H^{-2}\times H^2_0}dt + Re\iint_{\tilde{\omega}}  \hat h\overline{\varphi}dxdt \\
&=Re\iint_{Q_T} \hat u(\overline{\mathcal{L} \varphi - 1_{\mathcal{O}} \psi})dxdt + \int_0^T \left<\hat v,\overline{\mathcal{L} \psi}\right>_{H^{-2}\times H^2_0}dt.
    \end{split}
\end{equation}
From \eqref{eq9linear}, we get that
\begin{equation*}
\begin{split}
Re \iint_{Q_T} \hat u\overline{g^0}dxdt &+\int_0^T\left<\hat v,\overline{g^1}\right>_{H^{-2}\times H^2_0}dt = Re\iint_{\tilde{\omega}}  \hat h\overline{\varphi}dxdt\\&+Re\iint_{Q_T}  f^0\overline{ \varphi }dxdt + \int_0^T \left<f^1, \overline{\psi}    \right>_{H^{-2}\times H^2_0}dt,
    \end{split}
\end{equation*}
 for all $(g^0,g^1) \in L^2(0,T;H_0^1(\Omega))$, that is,  $(\hat u , \hat v ) = (\tilde u, \tilde v)$.
 
 Now on,  we prove that solutions $\hat u$ and $\hat v$ of \eqref{eq8linear} are, in fact, more regular. Let us start defining the functions 
$$u_* := (\hat \nu )^{-2}(\hat \sigma)^{-1}\hat u,     \quad \quad v_* :=(\hat \nu )^{-2}(\hat \sigma)^{-1}\hat v,$$
$$f_*^0 := (\hat \nu )^{-2}(\hat \sigma)^{-1}(f^0 + \hat h 1_{\omega}) \quad \text{and}\quad
f_*^1 := (\hat \nu )^{-2}(\hat \sigma)^{-1}f^1.$$
It follows, from \eqref{eq:linearized_systeminversa_a}, that $u_*, v_*, f_*^1$ and $f_*^2$ satisfies the following system
\begin{equation}\label{eq11linear}
\left\{    \begin{array}{ll}
        i(u_*)_t + (u_*)_{xx} - (u_*)_{xxxx} = f_*^0 + i\left( (\hat \nu )^{-2} (\hat \sigma)^{-1}\right)_t \hat u,   & \text{in $Q_T,$} \\[1.5pt]
        i(v_*)_t +(v_*)_{xx} - (v_*)_{xxxx} = f_*^1 + 1_{\mathcal{O}}u_* + i\left( (\hat \nu )^{-2} (\hat \sigma)^{-1}\right)_t \hat v,   & \text{in $Q_T,$} \\
        u_* = v_* = 0, &\text{in $\Sigma$,} \\
        (u_*)_{t=0} = 0 , (v_*)|_{t=T}=0, & \text{in $\Omega$.} 
    \end{array} \right.
\end{equation}
Now, since $ \left(\hat \nu )^{-2} (\hat \sigma)^{-1}\right)_t \leq CT^2 s (\hat \sigma)^{-1}$ we get that $f_*^0 +i\left((\hat \nu)^{-2}(\hat \sigma)^{-1} \right)_t \hat u \in L^2(Q)$ and also \linebreak $f_*^1 +i\left( (\hat \nu )^{-2}(\hat \sigma)^{-1} \right)_t\hat v \in L^2(0,T;H^{-2}(\Omega))$. Now, using the results of Section \ref{Sec1},  for \eqref{eq11linear}, we obtain 
$$u_* \in L^2(0,T;H^4(\Omega))\cap L^{\infty}(0,T; H_0^2(\Omega))$$ and $$v_* \in L^2(0,T;H^2(\Omega))\cap L^{\infty}(0,T; L^2(\Omega)).$$ 
This finishes the proof of Theorem \ref{controlabilidadelinearizado}. 
\end{proof}

\subsection{Null controllability: Nonlinear case}
In this section, we use an inverse mapping theorem to obtain the existence of insensitizing controls for the fourth-order nonlinear Schrödinger equation \eqref{eq:nonlinear_system}. We invite the reader to see the result below as well as additional comments on \cite{Alekseev}.
\begin{theorem}[Inverse mapping theorem]\label{teoinversa} Let $B_1$ and $B_2$ be two Banach spaces and let $$\mathcal{Y}: B_1 \rightarrow B_2$$ satisfying $\mathcal{Y}\in C^1(B_1, B_2)$. Assume that $b_1 \in B_1$, $\mathcal{Y}(b_1) = b_2$ and $$\mathcal{Y}'(b_1):B_1 \rightarrow B_2$$ is surjective. Then, there exists $\delta>0$ such that, for every $b'\in B_2$ satisfying $$||b'- b_2||_{B_{2}}< \delta,$$ there exists a solution of the equation 
$$\mathcal{Y}(b) = b', \quad b \in B_1.$$
\end{theorem}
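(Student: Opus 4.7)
The plan is to prove this classical Lyusternik--Graves type result by a Newton/Picard iteration, using the open mapping theorem to produce a bounded (nonlinear) right selection of the differential $\mathcal{Y}'(b_1)$. The first step is to apply the open mapping theorem to the surjective bounded linear map $\Lambda := \mathcal{Y}'(b_1): B_1 \to B_2$: this yields a constant $M>0$ and a (not necessarily linear) selection $S: B_2 \to B_1$ with $\Lambda S y = y$ and $\|Sy\|_{B_1} \leq M \|y\|_{B_2}$ for every $y \in B_2$. Then, using that $\mathcal{Y} \in C^1(B_1,B_2)$, I would pick $r>0$ small enough that, for every $b, \tilde b$ in the closed ball $\overline{B}(b_1,r) \subset B_1$,
\begin{equation*}
\|\mathcal{Y}(b) - \mathcal{Y}(\tilde b) - \Lambda (b-\tilde b)\|_{B_2} \leq \frac{1}{2M} \|b-\tilde b\|_{B_1},
\end{equation*}
which is possible because $\mathcal{Y}'$ is continuous at $b_1$.

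Next, given $b' \in B_2$ with $\|b'-b_2\|_{B_2} < \delta$ (with $\delta>0$ to be fixed), I would construct iterates $b^{(0)}=b_1$ and, inductively,
\begin{equation*}
b^{(n+1)} := b^{(n)} + S\bigl(b' - \mathcal{Y}(b^{(n)})\bigr).
\end{equation*}
The key identity is that, since $\Lambda S = \mathrm{Id}_{B_2}$,
\begin{equation*}
b' - \mathcal{Y}(b^{(n+1)}) = -\bigl[\mathcal{Y}(b^{(n+1)}) - \mathcal{Y}(b^{(n)}) - \Lambda (b^{(n+1)} - b^{(n)})\bigr],
\end{equation*}
so the linearization estimate gives, provided all iterates remain in $\overline{B}(b_1,r)$,
\begin{equation*}
\|b' - \mathcal{Y}(b^{(n+1)})\|_{B_2} \leq \tfrac{1}{2}\|b' - \mathcal{Y}(b^{(n)})\|_{B_2}.
\end{equation*}
Iterating, $\|b' - \mathcal{Y}(b^{(n)})\|_{B_2} \leq 2^{-n}\delta$, and hence $\|b^{(n+1)} - b^{(n)}\|_{B_1} \leq M 2^{-n}\delta$, so $(b^{(n)})$ is Cauchy in $B_1$ and its limit $b^\ast$ satisfies $\mathcal{Y}(b^\ast)=b'$ by continuity of $\mathcal{Y}$.

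The remaining bookkeeping is to choose $\delta$ small enough to close the induction: summing the telescoping bound gives $\|b^{(n)}-b_1\|_{B_1} \leq 2M\delta$, so picking $\delta \leq r/(2M)$ keeps all iterates inside $\overline{B}(b_1,r)$, which legitimises the use of the linearization inequality at each step. The main (and essentially only) obstacle is the production of the bounded selection $S$ in Step 1: this is where the hypotheses that both $B_1, B_2$ are Banach and that $\Lambda$ is surjective are essential, via the open mapping/Banach--Schauder theorem. Once $S$ is in hand, the contraction-type estimate and the induction are routine, and the argument mirrors the standard Newton scheme in Banach spaces (see, e.g., the treatment in \cite{Alekseev}).
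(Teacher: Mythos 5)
Your proof is correct. Note, however, that the paper itself does not prove Theorem \ref{teoinversa} at all: it is stated as a known result and the reader is referred to \cite{Alekseev} for the proof and further comments, so there is no in-paper argument to compare against. What you have supplied is the standard Lyusternik--Graves argument, and it is sound: the open mapping theorem yields the bounded (generally nonlinear, choice-dependent) right selection $S$ of $\Lambda=\mathcal{Y}'(b_1)$; continuity of $\mathcal{Y}'$ at $b_1$ together with the mean value inequality gives the $\frac{1}{2M}$-Lipschitz linearization bound on a small ball; the identity $b'-\mathcal{Y}(b^{(n+1)})=-\bigl[\mathcal{Y}(b^{(n+1)})-\mathcal{Y}(b^{(n)})-\Lambda(b^{(n+1)}-b^{(n)})\bigr]$ is verified directly from $\Lambda S=\mathrm{Id}$; and the residuals halve at each step provided the iterates stay in the ball, which your choice $\delta\leq r/(2M)$ guarantees via the telescoping sum. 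The only point worth making explicit in a written-up version is that the induction is simultaneous: at step $n$ one must first conclude $b^{(n+1)}\in\overline{B}(b_1,r)$ from the accumulated increments before invoking the linearization estimate for the pair $(b^{(n)},b^{(n+1)})$; you flag this as "bookkeeping" and the order of quantifiers does close. Your argument in fact proves slightly more than the statement requires (a quantitative bound $\|b-b_1\|_{B_1}\leq 2M\|b'-b_2\|_{B_2}$ on the solution), which is consistent with the classical treatment in \cite{Alekseev}.
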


Finally, we will give the proof of the main result of this manuscript. 

\begin{proof}[Proof of Theorem \ref{thm:control}] Consider, in Theorem \ref{teoinversa}, the following 
$$B_1 = \mathcal{C}\quad \text{and} \quad
B_2 = L^2((\hat \nu)^{-6}(\hat \sigma)^{-3}(0,T);L^2(\Omega))\times L^2((\hat \nu)^{-6}(\hat \sigma)^{-3}(0,T);H^{-2}(\Omega)).$$
Define the operator $$\mathcal{Y}: B_1 \rightarrow B_2$$ such that
$$\mathcal{Y}(u,v,h) :=(\mathcal{L}u -\zeta |u|^2u -1_{\omega}h\,,\, \mathcal{L}v - \overline{\zeta} \overline{u}^2\overline{v} - \overline{\zeta}|u|^2v - 1_{\mathcal{O}}u).$$

\begin{claim}\label{c1}
Operator $\mathcal{Y}$ belongs to $C^1(B_1,B_2)$.
\end{claim}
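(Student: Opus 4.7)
My plan is to split $\mathcal{Y}$ into its linear and cubic parts and handle each separately. Write $\mathcal{Y} = \mathcal{Y}_L + \mathcal{Y}_N$ with
$$\mathcal{Y}_L(u,v,h) := (\mathcal{L}u - 1_{\omega} h,\; \mathcal{L}v - 1_{\mathcal{O}} u),\qquad \mathcal{Y}_N(u,v,h) := (-\zeta |u|^2 u,\; -\overline{\zeta}\,\overline{u}^2\overline{v} - \overline{\zeta}|u|^2 v).$$
The map $\mathcal{Y}_L$ is affine and its continuity from $B_1$ to $B_2$ is essentially built into the definition of $\mathcal{C}$: the quantities $\mathcal{L}u - 1_{\omega} h$ and $\mathcal{L}v - 1_{\mathcal{O}} u$ are already required to sit in the corresponding weighted $L^2$-spaces, and a straightforward comparison of the weights $(\nu^*,\sigma^*)$ with $(\hat\nu,\hat\sigma)$ gives the inclusion into $B_2$. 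Being continuous and affine, $\mathcal{Y}_L$ is of class $C^\infty$, so the whole matter reduces to showing $\mathcal{Y}_N \in C^1(B_1,B_2)$.

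The decisive observation is that the definition of $\mathcal{C}$ forces $u$ and $v$ to be remarkably regular. Setting
$$\tilde u := (\hat\nu)^{-2}\hat\sigma^{-1} u,\qquad \tilde v := (\hat\nu)^{-2}\hat\sigma^{-1} v,$$
the membership of $(u,v,h)$ in $\mathcal{C}$ gives $\tilde u \in L^\infty(0,T; H^2_0(\Omega))$ and $\tilde v \in L^\infty(0,T; L^2(\Omega))$, and the one-dimensional Sobolev embedding $H^2_0(\Omega) \hookrightarrow L^\infty(\Omega)$ further yields $\tilde u \in L^\infty(Q_T)$. With $u = \hat\sigma \hat\nu^2 \tilde u$ and $v = \hat\sigma \hat\nu^2 \tilde v$, a direct computation gives
$$\int_0^T\!\!\int_\Omega (\hat\nu)^{-6}(\hat\sigma)^{-3}\, \bigl||u|^2 u\bigr|^2\, dx\,dt \,=\, \int_0^T\!\!\int_\Omega \hat\sigma^3 \hat\nu^6 |\tilde u|^6\, dx\,dt,$$
which is finite because $\hat\sigma^3 \hat\nu^6$ stays uniformly bounded on $(0,T)$ (the exponential decay of $\hat\sigma = e^{\hat m}$ as $t\to 0^+$ dominates the polynomial blow-up of $\hat\nu$). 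The same kind of estimate, combined with the inclusion $L^2(\Omega) \hookrightarrow H^{-2}(\Omega)$, controls $\overline{u}^2\overline{v}$ and $|u|^2 v$ in $L^2((\hat\nu)^{-6}(\hat\sigma)^{-3}dt; H^{-2}(\Omega))$, so $\mathcal{Y}_N$ is well defined as a map $B_1 \to B_2$.

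For differentiability I use that $\mathcal{Y}_N$ is the restriction of a polynomial of degree three in the variables $(u, \overline u, v, \overline v)$. Its formal derivative at $(u,v,h)$ in the direction $(U,V,H)$ is
$$D\mathcal{Y}_N(u,v,h)\cdot(U,V,H) = \bigl(-\zeta(2|u|^2 U + u^2\overline U),\; -\overline\zeta(2\overline u\,\overline U\,\overline v + \overline u^2\,\overline V + \overline u v U + u v \overline U + |u|^2 V)\bigr),$$
and every monomial here is again a product of three factors of the form $\hat\sigma\hat\nu^2\tilde u,\,\hat\sigma\hat\nu^2\tilde v,\,\hat\sigma\hat\nu^2\tilde U,\,\hat\sigma\hat\nu^2\tilde V$ (or their conjugates), producing precisely the same weight $\hat\sigma^3\hat\nu^6$ that is absorbed by $(\hat\nu)^{-6}(\hat\sigma)^{-3}$. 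The estimates of the previous paragraph therefore show that $D\mathcal{Y}_N(u,v,h)$ is a bounded linear map $B_1 \to B_2$, that the Taylor remainder is $O(\|(U,V,H)\|_{B_1}^2)$ in $B_2$, and that $(u,v,h)\mapsto D\mathcal{Y}_N(u,v,h)$ is continuous in the operator norm.

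The main obstacle in this plan is the bookkeeping of the Carleman weights: one has to verify that the exponents $6$ and $3$ defining $B_2$ are precisely those required to absorb the triple product of the pointwise factor $\hat\sigma\hat\nu^2$ coming out of each cubic monomial, and that the comparison between $(\nu^*,\sigma^*)$ and $(\hat\nu,\hat\sigma)$ is consistent throughout. Once this weight comparison is settled, no further subtle issue is anticipated: the rest is the standard fact that polynomial nonlinearities are smooth on Banach algebras, together with the one-dimensional Sobolev embedding $H^2\hookrightarrow L^\infty$.
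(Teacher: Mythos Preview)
Your proposal is correct and follows essentially the same route as the paper: decompose $\mathcal{Y}$ into its linear part and a cubic nonlinear part, then use the weighted regularity built into $\mathcal{C}$ together with one-dimensional Sobolev embeddings to show that the trilinear maps are bounded (hence $C^\infty$). The paper is slightly terser, embedding both $(\hat\nu)^{-2}\hat\sigma^{-1}u$ and $(\hat\nu)^{-2}\hat\sigma^{-1}v$ into $L^6(Q_T)$ and then applying H\"older's inequality, whereas you exploit the stronger $L^\infty(Q_T)$ bound for $\tilde u$ paired with $L^\infty_t L^2_x$ for $\tilde v$; both routes yield the same conclusion.
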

Indeed, first note that all terms of $\mathcal{Y}$ are linear except: $|u|^2u$, $\overline{u}^2\overline{v}$ and $|u|^2v$. So, the Claim \ref{c1} is equivalent to prove that the trilinear operator given by 
\begin{equation}\label{tril_1}((u_1, v_1, h_1), (u_2, v_2, h_2), (u_3, v_3, h_3)) \mapsto u_1 u_2 v_3
\end{equation}
and
\begin{equation}\label{tril_2_3}((u_1, v_1, h_1), (u_2, v_2, h_2), (u_3, v_3, h_3)) \mapsto u_1 u_2 v_3
\end{equation}
are continuous maps from $\mathcal{C}^3$ to $L^2((\hat \nu)^{-6}(\hat \sigma)^{-1}(0,T);L^2(\Omega))$. However, $(u_i,v_i,h_i) \in \mathcal{C}$, thus we get that $$(\hat{\nu})^{-2}(\hat{\sigma})^{-1}u_i \in L^2(0, T; H^4((\Omega)) \cap L^\infty(0, T; H^2_0(\Omega)) \hookrightarrow L^6(Q_T)
$$
and
$$(\hat{\nu})^{-2}(\hat{\sigma})^{-1}v_i \in L^2(0, T; H^2((\Omega)) \cap L^\infty(0, T; L^2(\Omega)) \hookrightarrow L^6(Q_T),$$
since we are working on an unidimensional case.
At this point, we have fixed $\lambda$ and $\mu$ such that  $$B_2 \subset L^2((\nu^*)^{-\frac{7}{2}}(\sigma^*)
^{-1}(0,T);L^2(\Omega))\times L^2((\nu^*)^{-\frac{7}{2}}(\sigma^*)
^{-1}(0,T);H^{-2}(\Omega))\footnote{
Note that if necessary we could have taken $\lambda$ and $\mu$ large enough in such a way that this inclusion is still satisfied.}$$ holds.
Therefore, note first that
\begin{equation*}
\begin{split}
    \left|\left|(\hat \nu)^{-6}(\hat \sigma)^{-3}u_1 u_2 u_3 \right|\right|_{L^2(Q_T)} &= \left|\left|(\hat \nu)^{-2}(\hat \sigma)^{-1}(\hat \nu)^{-2}(\hat \sigma)^{-1}(\hat \nu)^{-2}(\hat \sigma)^{-1}u_1 u_2 u_3 \right|\right|_{L^2(Q_T)}.
        \end{split}
\end{equation*}
Thus, putting together each $(\hat \nu)^{-2}(\hat \sigma)^{-1}$ with each $u_i$, for $i=1,2,3$, thanks to the previous equality and the Hölder inequality, we get that
\begin{equation}\label{eq1inv}
\begin{split}
\left|\left|(\hat \nu)^{-6}(\hat \sigma)^{-3}u_1 u_2 u_3 \right|\right|_{L^2(Q_T)} \leq C \prod_{k=1}^{3}\left|\left|(\hat \nu)^{-2}(\hat \sigma)^{-1}u_i\right|\right|_{L^6(Q_T)} \leq C \prod_{k=1}^{3} \left|\left|(u_i,v_i,z_i)\right|\right|_{\mathcal{C}},
    \end{split}
\end{equation}
and analogously, we have
\begin{equation*}
    \left|\left|(\hat \nu)^{-6}(\hat \sigma)^{-3}u_1 u_2 v_3 \right|\right|_{L^2(Q_T)} \leq C \prod_{k=1}^{3} \left|\left|(u_i,v_i,z_i)\right|\right|_{\mathcal{C}},
\end{equation*}
this proves that both \eqref{tril_1} and \eqref{tril_2_3} trilinear maps are continuous maps from \linebreak $\mathcal{C}^{3}$ to $L^2((\hat \nu)^{-6}(\hat \sigma )^{-1}(0,T);L^2(\Omega))$, which is equivalent to $\mathcal{Y}$ be a differentiable map from $B_1$ to $B_2$, showing the Claim \ref{c1}.

\begin{claim}\label{c2}
$\mathcal{Y}'(0,0,0)$ is surjective.
\end{claim}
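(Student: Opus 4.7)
The plan is first to compute $\mathcal{Y}'(0,0,0)$ explicitly. Since $\mathcal{Y}$ is the sum of a linear map and three continuous trilinear maps in $(u,v)$, coming from the cubic nonlinearities $\zeta|u|^2u$, $\overline{\zeta}\,\overline{u}^2\overline{v}$ and $\overline{\zeta}|u|^2v$, and any continuous trilinear map has vanishing Fr\'echet derivative at the origin, I expect
\[
\mathcal{Y}'(0,0,0)(U,V,H) = (\mathcal{L}U - 1_{\omega}H,\ \mathcal{L}V - 1_{\mathcal{O}}U).
\]
Consequently, the surjectivity of $\mathcal{Y}'(0,0,0):\mathcal{C}\to B_2$ is equivalent to the following assertion: for every $(f^0,f^1)\in B_2$ there exists $(U,V,H)\in\mathcal{C}$ solving the linearized insensitizing system \eqref{eq:linearized_systeminversa_a} with source terms $(f^0,f^1)$.

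This is precisely the conclusion of Theorem \ref{controlabilidadelinearizado}, so the proof reduces to verifying that every $(f^0,f^1)\in B_2$ satisfies the admissibility hypothesis of that theorem, namely $(\nu^*)^{-7/2}(\hat\sigma)^{-1}f^0\in L^2(Q_T)$ and $(\nu^*)^{-7/2}(\hat\sigma)^{-1}f^1\in L^2(0,T;H^{-2}(\Omega))$. Because $\nu^*$ and $\hat\nu$ differ only by a multiplicative constant of the form $e^{C\mu}$ (and similarly for $\sigma^*$ and $\hat\sigma$), a direct pointwise inspection shows that the ratio $(\nu^*)^{-7}(\hat\sigma)^{-2}/[(\hat\nu)^{-6}(\hat\sigma)^{-3}]$ remains bounded on $(0,T)$ once $\lambda,\mu$ are taken large enough. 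This yields the embedding
\[
B_2\hookrightarrow L^2\!\left((\nu^*)^{-7/2}(\sigma^*)^{-1}(0,T);L^2(\Omega)\right)\times L^2\!\left((\nu^*)^{-7/2}(\sigma^*)^{-1}(0,T);H^{-2}(\Omega)\right),
\]
exactly as registered in the footnote after \eqref{eq1inv}.

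With that embedding in hand, applying Theorem \ref{controlabilidadelinearizado} to the data $(f^0,f^1)$ produces a triple $(U,V,H)\in\mathcal{C}$ satisfying $\mathcal{L}U-1_{\omega}H=f^0$ and $\mathcal{L}V-1_{\mathcal{O}}U=f^1$, i.e. $\mathcal{Y}'(0,0,0)(U,V,H)=(f^0,f^1)$, which is the desired surjectivity. I do not anticipate any substantive obstacle: the cubic nature of the nonlinearity forces the derivative at zero to collapse to the linear operator, and the linear partial null-controllability is already in hand from Section \ref{Sec3}. The only real bookkeeping is the weight comparison above, which is a routine monotonicity computation in $\lambda$ and $\mu$ of the type already carried out repeatedly in Section \ref{Sec2}. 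Once Claim \ref{c2} is established, Claim \ref{c1} together with Theorem \ref{teoinversa} applied at $b_1=(0,0,0)$, $b_2=\mathcal{Y}(0,0,0)=(0,0)$ will close the proof of Theorem \ref{thm:control}, the smallness assumption on $\|e^{C/t}f\|_{L^2(Q_T)}$ being the translation of the smallness of $b'$ required by the inverse mapping theorem.
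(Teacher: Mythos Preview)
Your proposal is correct and follows essentially the same route as the paper: you compute $\mathcal{Y}'(0,0,0)$ by observing that the trilinear terms vanish at the origin, then reduce surjectivity to the linear null-controllability result of Theorem \ref{controlabilidadelinearizado}, checking the weight inclusion $B_2\subset L^2((\nu^*)^{-7/2}(\sigma^*)^{-1}(0,T);L^2(\Omega))\times L^2((\nu^*)^{-7/2}(\sigma^*)^{-1}(0,T);H^{-2}(\Omega))$ exactly as the paper does in the footnote. Your exposition is in fact more explicit than the paper's on the weight comparison, but the argument is the same.
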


First, note that $\mathcal{Y}(0,0,0)=(0,0)$. By the other hand, observe that $\mathcal{Y}'(0,0,0):B_1 \rightarrow B_2$ is given by 
$$\mathcal{Y}'(0,0,0)(u,v,h) = (iu_t + u_{xx} - u_{xxxx} - 1_{\omega}h,  iv_t + v_{xx} - v_{xxxx} - 1_{\mathcal{O}u}),$$
for $(u,v,h) \in B_1$. Invoking the null controllability result for linear system \eqref{eq:linearized_systeminversa}, that is, thanks to Theorem \ref{controlabilidadelinearizado}, $\mathcal{Y}'(0,0,0)$ is surjective, proving the Claim \ref{c2}. 

Finally, by taking $b_1 = (0,0,0)$, $b_2 = (0,0)$ and using Theorem \ref{teoinversa}, there exists $\delta>0$ such that if $||(f^0,f^1)||_{B_2} < \delta$, then we can find a control $h$ such that the triple $(u,v,h) \in B_1$ satisfies $\mathcal{Y}(u,v,h) = (f^0, f^1)$. By a particular choice of $f
^0 = f \in L^2((\hat \nu)^{-6}(\hat \sigma)^{-1}(0,T);L^2(\Omega))$ and $f^1 \equiv 0$, Theorem \ref{thm:control} is showed since a triple $(u,v,h) \in B_1$ satisfies $v(0) = 0$ in $\Omega$ and solves \eqref{eq:nonlinear_system}. 
\end{proof}

%%%%%%%%%%%%%%%%%%%%%%%%%%%%%%%%%%
%%%% NEW SECTION
%%%%%%%%%%%%%%%%%%%%%%%%%%%%%%%%%%

\section{Further comments and open issues}\label{SecFinal}
To our knowledge, these results in this article are the first concerning the existence of insensitizing controls for the fourth-order Schrödinger equation, in this way, we believe that this manuscript can open a series of questions, which are discussed now.

\subsection{Null condition of the initial data}
In this point, we discuss the necessity to assume the null condition of the initial data in Theorem \ref{thm:control}. In \cite{teresa2000insensitizing}, the author proves that under some suitable conditions, the existence of insensitizing controls may or may not hold, which indicates that this kind of problem cannot be solved for every initial data. In this way, we also have the same drawback in our result. To overcome this difficulty, we believe that the techniques used for the Heat equation, due to De Tereza \cite{teresa2000insensitizing}, can be adapted for our case. Precisely, the idea consists of using the fundamental solution to construct an explicit solution where the observability inequality does not hold.

\subsection{About the nonlinear terms}
Note that if we change the cubic term $|u|^2u$ by a more general term $|u|^{p-2}u$, with $p\geq3$, then one must prove a partial null controllability for the following system
\begin{equation*}
	\left\{\begin{array}{lll}
      i u_t  + u_{xx} - u_{xxxx} - \zeta |u|^{p-2} u = f + 1_{\omega}h,  &\text{in}& Q,\\
      i  v_t  + v_{xx} - v_{xxxx} - \overline{\zeta} p|u|^{p}\overline{u}^{2}\overline{v} - (p+1)\overline{\zeta} |u|^{p-2}\overline{v} = 1_{\mathcal{O}}u,  &\text{in}& Q,\\
      u(t,0) = u(t,L) = v(t,0) = v(t,L) = 0,&\text{on}& t \in (0,T),\\ u_x(t,0) = u_x(t,L) = v_x(t,0) = v_x(t,L) = 0,&\text{on}& t \in (0,T), \\
         u(0,x)=u_0(x), v(T,x) = 0, &\text{in}&\Omega.
	\end{array} \right.
	\end{equation*}
If the structure of the problem is still the same and we only change the nonlinearity, the main difficulty here is to obtain well-posedness results which gives enough regularity for the solutions to obtain the analogous Hölder estimate as in \eqref{eq1inv}. In fact, to solve  it one must have valid embedding from the state spaces into $L^{2p-2}(Q)$, for $p\geq3$, which is possible since the following estimate holds
$$    \left|\left||u|^{p-2} u\right|\right|_{H^s(\Omega)}\leq C||u||^{p-1} _{H^s(\Omega)},$$
when $s\geq\frac{1}{2}$ and $p\geq3$, see \cite{LiZheng2020} for well-posedness of the general nonlinear problem.
 
Additionally, if we change to a general type nonlinearity $g$, we obtain the following optimal system 
\begin{equation*}
	\left\{\begin{array}{lll}
      i u_t  + u_{xx} - u_{xxxx} +g(u) = f + 1_{\omega}h,  &\text{in}& Q,\\
      i  v_t  + v_{xx} - v_{xxxx} +g'(u)v = 1_{\mathcal{O}}u,  &\text{in}& Q,\\
       u(t,0) = u(t,L) = v(t,0) = v(t,L) = 0,&\text{on}& t \in (0,T),\\ u_x(t,0) = u_x(t,L) = v_x(t,0) = v_x(t,L) = 0,&\text{on}& t \in (0,T), \\
         u(0,x)=u_0(x), v(T,x) = 0, &\text{in}&\Omega.
	\end{array} \right.
	\end{equation*}
It is expected that most of these problems have no solution, i.e., it is not possible to insensitize the functional unless we impose some conditions on $g$. To exemplify the comments above, some of these issues were already considered for the case of nonlinearities with superlinear growth at infinity. In \cite{bodart2004existence}, the authors dealing with a semilinear heat equation proved positive result of existence of insensitizing controls considering $g\in C^1$ a nonlinear function verifying $g''\in L^{\infty}_{loc}(\mathbb{R})$, $g(0) = 0$ and  $$\lim_{|s|\rightarrow \infty} \frac{g'(s)}{\ln(1+|s|)} = 0,$$ furthermore, the result is also valid for nonlinearities $g$ of the form  $$|g(s)| = |p_1(s)|\ln^{\alpha}(1+|p_2(s)|),$$ for all $|s|\geq s_0 >0,$ with $\alpha \in [0,1)$ and $p_i$, $i=1,2$, are affine functions. Moreover, they proved negative results of existence considering a nonlinearity $g$ verifying the conditions above, that is,  taking $g$ as $$g(s) = \int_{0}^{|s|}\ln^{\alpha}(1+|\sigma|^2) d\sigma, \quad \text{for all $s\in \mathbb{R}$},$$ but choosing $\alpha >2$. Similar results are proved in \cite{zl} for a class of nonlinear Ginzburg-Landau equation.

Thus, in the case of the fourth-order nonlinear Schrödinger equation, this kind of situation, that is,  introducing a function $g$ with certain properties and proving the existence of insensitizing controls is still an open issue.

\subsection{About the sentinel functional}\label{sb1} One way to solve the problems of nonlinearity is to change the structure of the functional. Due to the lack of regularity of the characteristic function, if we change it to a more regular function then one can still prove the result for more general nonlinearity $|u|^{p-2}u$, with $p\geq3$, considering a functional of the form 
$$J(\tau, h) = \frac{1}{2}\iint_{Q_T} \mathcal{R}(x)|u(x,t)|^2 dxdt,$$
where $\mathcal{R}\in C^{\infty}(\Omega)$ is a smooth function with $supp(\mathcal{R}) \subset \mathcal{O}$. 

We note that there exists uncountable insensitizing control problems as we change the sentinel functional. In fact, by the equivalent formulation in a cascade system with double the equations of the original system, controllability problems with fewer control forces than equations are not fully understood in PDEs, so they can also be interesting from the control theory point of view.  Some of the motivations for these problems arise from physical phenomena, thus typically we focus our attention on functionals that have ``physical" meanings: If the functional is the local $L^2-$norm of the solution then we are looking for controls that locally preserve the energy (kinetic or potential, depending on the modeling) of the system, and if we change to a first derivative (or gradient in the $N-$dimensional case) the problem consists in finding controls that locally preserves the mean value of the energy. 

In this perspective, let $D$ be a derivative operator such as $Du = u_x$ or $Du= u_{xx}$. An interesting $-$ and difficult $-$ problem is to analyze the existence of insensitizing controls when the sentinel functional takes the form  $$J(\tau, h) :=\frac{1}{2} \iint_{Q_{T}}|Du(x,t)|^2 dxdt.$$ In such, the optimal system become
\begin{equation*}
	\left\{\begin{array}{lll}
      i u_t  + u_{xx} - u_{xxxx} - \zeta |u|^2 u = f + 1_{\omega}h,  &\text{in}& Q_T,\\
      i  v_t  + v_{xx} - v_{xxxx} - \overline{\zeta} \overline{u}^2\overline{v} - 2\overline{\zeta} |u|^2 v = D(1_{\mathcal{O}}Du),   &\text{in}& Q_T,\\
      u(t,0) = u(t,L) = v(t,0) = v(t,L) = 0,&\text{on}& t \in (0,T), \\
      u_x(t,0) = u_x(t,L) = v_x(t,0) = v_x(t,L) = 0,&\text{on}& t \in (0,T), \\
         u(0,x)=u_0(x), v(T,x) = 0, &\text{in}&\Omega.
	\end{array} \right.
	\end{equation*}
Again, it is not expected to obtain positive results of the existence of insensitizing controls for every differential operator $D$ in virtue of the lack of regularity provoked by coupling term $D(1_{\mathcal{O}}Du)$, since (again) the characteristic function is not regular. Despite that, Guerrero \cite{guerrero2007null} dealt with a parabolic equation. The author proved a positive result of existence considering a functional depending on the gradient of the solution. Since the equation was linear, with constant coefficients, the argument consisted of considering a global Carleman estimate with different exponents, not for the equation, but for the equation satisfied by the Laplacian of the solutions to then recover information using the equation with the coupling. This is not the case when dealing with a nonlinear problem since deriving the equation would give us many other terms. In \cite{guerrero2007controllability}, the same author proved a similar result considering a linear Stokes equation with constant coefficients but with for the $curl$ of the solution. Finally, we cite the work of the second author \cite{tanaka2019}, where the authors proved positive results of insensitizing controls considering a functional depending on the gradient of the solution for the cubic nonlinear Ginzburg-Landau equation. The result arose by proving a new suitable Carleman estimate for the Ginzburg-Landau equation. 

In this spirit, there are many alternatives to define the sentinel functional related to the insensitizing control problems for 4NLS. Thus, we expect that these three works together with the results in this paper, open prospects to prove similar results considering a sentinel functional with the gradient of the solution. Moreover, since the Carleman estimate \eqref{eqcarleman} has third-order terms, maybe it is possible, at some point, to adapt the arguments to consider a functional with the Laplacian of the solution of the 4NLS, but clearly, to prove it is necessary new arguments of those that were applied here, at least proving a new Carleman estimate for the fourth-order Schrödinger equation, as was done in \cite[Theorem 1.1]{peng2} for the Cahn-Hilliard type equation and as in \cite{peng3} where a Carleman estimate for stochastic fourth order Schr\"odinger equation is showed. The readers are invited to read the recent and interesting work by Imanuvilov and Yamamoto \cite{ImYa2020}, which proves a Carleman estimate for a fourth-order parabolic equation in general dimensions.

\subsection{N--dimensional case} Zheng and Zhou \cite{zheng} studied the boundary controllability of the 4NLS in a bounded domain $\Omega\subset\mathbb{R}^n$. Using a $L^2-$Neumann boundary control, the authors proved that the solution of 4NLS is exactly controllable in $H^{-2}(\Omega)$ using the Hilbert Uniqueness Method and multiplier techniques.  In the sense of the existence of insensitizing controls, we conjecture that the Carleman inequality shown here can be extended to the $N$-dimensional case. Thus, if we consider the sentinel functional as defined in \eqref{def:jfunc}, our result remains valid, for this case. However, the main issue here is when we consider a functional like the one mentioned in Subsection \ref{sb1} or other types of functional associated with the nonlinear problem. This type of problem looks interesting and still is open for the fourth-order nonlinear Schrödinger equation.

\subsection*{Acknowledgments} The authors are grateful to the anonymous referees for the constructive comments that improved this work.

\appendix

\section{Well-posedness}\label{Sec1}

In this section, we will show some results about the existence of a solution for the system 
\begin{equation}\label{sistemalinearexistencia}
	\left\{\begin{array}{lll}
      i  u_t  +  u_{xx} - u_{xxxx}  = F^0, &\text{in}& {Q_T},\\
      i v_t  + v_{xx} - v_{xxxx} = F^1 +1_{\mathcal{O}}u,   &\text{in}& {Q_T},\\
      u(t,0) = u(t,L) = u_x(t,0) = u_x(t,L) = 0,&\text{on}& t \in (0,T),\\  v(t,0) =  v(t,L) = v_x(t,0) = v_x(t,L) = 0,&\text{on}& t \in (0,T), \\
      u(0,x)=u_0(x), v(T,x) = 0, &\text{in}&\Omega,
	\end{array} \right.
	\end{equation}
for given $u_0$ and $(F^0, F^1)$. The proofs here can be adapted to prove the existence of solutions for systems \eqref{eq:linearized_systeminversa} and \eqref{adjuntoinversa}.

\subsection{The linearized system}
We first consider the simplest linear equation with null boundary conditions which is a linearized version of \eqref{biharmonic} around zero. More precisely, we consider the following
\begin{equation}\label{eqlinearexistence}
    \left\{ \begin{array}{ll}
        iu_t + u_{xx} - u_{xxxx} = f, & \text{in $Q_T$,} \\
        u(t,0) = u(t,L) =u_x(t,0) = u_x(t,L) =0, & \text{on $(0,T)$,} \\
        u(0,x) = u_0(x), & \text{in $\Omega$.}
    \end{array}\right.
\end{equation}
The first result is a consequence of the semigroup theory.  Before presenting it, let us consider the differential operator $A: D(A)\subset L^2(\Omega) \rightarrow L^2(\Omega)$ given by $$Au:= iu_{xx}-iu_{xxxx},$$ with domain $D(A)=H^4(\Omega)\cap H_0^2(\Omega)$. Thus, the nonhomogeneous linear system \eqref{eqlinearexistence} takes the form
\begin{equation}\label{eqlinearexistenceabstrato}
\begin{cases}
u_t (t)= Au(t)+if(t),  & t\in[0,T], \\
u(0)=u_0.
\end{cases}
\end{equation}
The following proposition guarantees some properties for the operator $A$. Precisely, the result ensures the existence of regular solutions for the system \eqref{eqlinearexistence}.
\begin{proposition}\label{pro_linear} Let $f \in C^1([0,T];L^2(\Omega))$ and $u_0 \in D(A)$, then \eqref{eqlinearexistence} has a unique solution 
\begin{equation}\label{regularidadeeqlinearizado}
    u \in C([0, T];H^4(\Omega)\cap H^2_0(\Omega)) \cap C^1([0, T ]; L^2(\Omega)).
\end{equation} 
\end{proposition}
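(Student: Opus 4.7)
The plan is to prove Proposition \ref{pro_linear} by recasting \eqref{eqlinearexistence} as the abstract Cauchy problem \eqref{eqlinearexistenceabstrato} and invoking standard semigroup theory. The first step is to show that the operator $A = i\partial_x^2 - i\partial_x^4$ with domain $D(A) = H^4(\Omega)\cap H_0^2(\Omega)$ is skew-adjoint on $L^2(\Omega)$, so that by Stone's theorem it generates a $C_0$-group $\{e^{tA}\}_{t\in\mathbb{R}}$ of unitary operators.

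To verify skew-adjointness I would first note that $D(A)$ is dense in $L^2(\Omega)$ and compute, for $u,v\in D(A)$, using the clamped boundary conditions $u=u_x=v=v_x=0$ at $x=0,L$, that integration by parts twice on the $u_{xx}$ term and four times on the $u_{xxxx}$ term gives
\begin{equation*}
(Au,v)_{L^2} = \int_\Omega (iu_{xx}-iu_{xxxx})\overline{v}\,dx = -\int_\Omega u\,\overline{(iv_{xx}-iv_{xxxx})}\,dx = -(u,Av)_{L^2},
\end{equation*}
so $A$ is skew-symmetric. To upgrade this to skew-adjointness one checks that $D(A^*)\subset D(A)$: if $w\in D(A^*)$, a distributional argument using the PDE identity $iw_{xx}-iw_{xxxx}\in L^2(\Omega)$ combined with elliptic regularity for $-\partial_x^4$ on $\Omega$ with boundary conditions forces $w\in H^4(\Omega)$, and testing against a dense set of functions in $D(A)$ recovers the boundary conditions $w=w_x=0$. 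Thus $A^*=-A$ and Stone's theorem applies.

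Once the group is available, the variation of parameters formula
\begin{equation*}
u(t) = e^{tA}u_0 + i\int_0^t e^{(t-s)A}f(s)\,ds
\end{equation*}
is the candidate solution. For $u_0\in D(A)$ the first summand lies in $C([0,T];D(A))\cap C^1([0,T];L^2(\Omega))$ by standard semigroup theory. For the Duhamel term, the hypothesis $f\in C^1([0,T];L^2(\Omega))$ is exactly the regularity needed (see, e.g., Pazy) to ensure that $\int_0^t e^{(t-s)A}f(s)\,ds$ belongs to $C([0,T];D(A))\cap C^1([0,T];L^2(\Omega))$: one writes $\int_0^t e^{(t-s)A}f(s)\,ds = \int_0^t e^{sA}f(t-s)\,ds$, differentiates in $t$, and uses the $C^1$-in-time regularity of $f$ to shift derivatives and conclude. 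Uniqueness follows from the energy identity $\frac{d}{dt}\|u\|_{L^2}^2=0$ for the homogeneous problem, an immediate consequence of $A$ being skew-adjoint.

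The main technical point is the identification $D(A^*)=D(A)$, i.e., elliptic regularity together with the recovery of the clamped boundary conditions from the distributional definition of $A^*$; the rest is bookkeeping within the standard framework for inhomogeneous Cauchy problems governed by a unitary group. The same scheme adapts to the coupled systems \eqref{eq:linearized_systeminversa} and \eqref{adjuntoinversa}: one regards them as triangular Cauchy problems in which the cascade coupling term $1_{\mathcal{O}}u$ is treated as a forcing for the $v$-equation and is already of class $L^2$, so the argument above, applied successively to the two equations and (for \eqref{adjuntoinversa}) with reversed time, yields existence, uniqueness, and the regularity claimed in the appendix.
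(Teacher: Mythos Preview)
Your proof is correct and follows essentially the same approach as the paper: establish that $A$ generates a $C_0$-(semi)group on $L^2(\Omega)$, then invoke the standard regularity result for inhomogeneous Cauchy problems with $u_0\in D(A)$ and $f\in C^1([0,T];L^2)$. The only cosmetic difference is that you use Stone's theorem (skew-adjoint $\Rightarrow$ unitary group) whereas the paper phrases it via m-dissipativity and Hille--Yosida; in fact your identification of $A$ as skew-adjoint is the correct one, and your remark that $D(A^*)=D(A)$ requires an elliptic-regularity check is a point the paper glosses over.
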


\begin{proof}
Consider the linear operator defined by $A$.  This allows us to rewrite \eqref{eqlinearexistence} in the abstract form \eqref{eqlinearexistenceabstrato}.  We have that $A$ is skew-adjoint operator and $A$ is m-dissipative. Indeed, first, is not difficult to see that
$$(Au,v)_{L^2(\Omega)}=-(u,Av)_{L^2(\Omega)},$$
for all $u,v\in D(A).$ That is, $A$ is symmetric. Additionally, $D(A^{\star})=D(A)$, so $A$ is skew-adjoint. Finally, we have
\begin{equation*}
   ( Au , u )_{L^2(\Omega)} = Re\left(i\int_{\Omega} (u_{xx} - u_{xxxx})\overline{u} dx\right) = Re\left(i \int_{\Omega} -(|u_x|^2 + |u_{xx}|^2)dx \right) = 0,
\end{equation*}
for any $u\in D(A)$, and then $A$ is dissipative. Therefore, $A$ is an m-dissipative operator (e.g. \cite[Corollary 2.4.8]{Ca}) and by the Hille–Yosida–Phillips theorem (e.g. \cite[Theorem 3.4.4]{Ca}) we obtain that $A$ is a generator of a contraction semigroup in $L^2(\Omega)$. Thus, if $u_0\in D(A)$ and $f\in C^1([0,T];L^2(\Omega))$, then equation \eqref{eqlinearexistence} has solutions $u$ with the regularity \eqref{regularidadeeqlinearizado}.  (e.g. \cite[Proposition 4.1.6]{Ca}).
\end{proof}

\subsection{The coupled linearized system}

We are now concerned with the existence of solutions for the coupled linearized system. More precisely, we will prove the well-posedness results to the system \eqref{sistemalinearexistencia}.  First, consider the linear unbounded operator $A$ defined in the previous subsection and 
\begin{equation*}
    \left\{ \begin{array}{ll}
A_1u = -iu_{xx} + iu_{xxxx}\in H^{-2}(\Omega),\\
D(A_1) =H_0^2(\Omega).
    \end{array}\right.
\end{equation*}
Both operators are m-dissipative with dense domains; therefore, they generate the $C_0$
semigroups of contractions $\mathcal{S}_0$ and $\mathcal{S}_1$, respectively. Now, consider the following spaces:
\begin{equation*}
Y_0=C([0,T];D(A))\cap C^1([0,T];L^2(\Omega))
\end{equation*}
and
\begin{equation*}
Y_1=C([0,T];D(A_1))\cap C^1([0,T];H^{-2}(\Omega)).
\end{equation*}
The next result is dedicated to proving the existence of \textit{regular solutions} for \eqref{sistemalinearexistencia}
\begin{theorem}[Regular solutions]\label{regular}
Assume that $u_{0} \in D\left(A\right)$,
$$F^{0} \in C\left([0, T], H^4(\Omega)\cap H^2_0(\Omega)\right) \cap W^{1,1}\left(0, T ; H^4(\Omega)\cap H^2_0(\Omega)\right)$$
and
$$F^{1} \in C\left([0, T], H^{-2}(\Omega)\right) \cap W^{1,1}\left(0, T ; H^{-2}(\Omega)\right).$$
Then, problem \eqref{sistemalinearexistencia} has a unique regular solution in the sense that
\[
\left\{\begin{array}{l}
(u, v) \in Y_{0} \times Y_{1}, \\
i  u_t  +  u_{xx} - u_{xxxx}  = F^0,\\
i v_t  + v_{xx} - v_{xxxx} = F^1 +1_{\mathcal{O}}u,  \\
\left.u\right|_{t=0}=u_{0},\left.v\right|_{t=T}=0.
\end{array}\right.
\]
\end{theorem}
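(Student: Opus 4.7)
The strategy is to \emph{decouple} the system by solving the first equation in isolation, and then treating the coupling term $1_{\mathcal{O}}u$ as a known source for the second equation, in both cases invoking the nonhomogeneous Hille--Yosida--Phillips theory in the spirit of Proposition \ref{pro_linear}. The asymmetry of the hypotheses (regularity of $F^0$ in $D(A)$ versus regularity of $F^1$ only in $H^{-2}(\Omega)$) is precisely what is needed to feed a sufficiently regular source $F^1+1_{\mathcal{O}}u$ into the $v$-equation, which lives in the weaker space $H^{-2}(\Omega)$.

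\textbf{Step 1: the $u$-equation.} The equation for $u$ is independent of $v$, so it is exactly \eqref{eqlinearexistence} with $f=F^0$. Since $F^0\in C([0,T];D(A))\cap W^{1,1}(0,T;D(A))$ embeds into $W^{1,1}(0,T;L^2(\Omega))$ and $u_0\in D(A)$, Proposition \ref{pro_linear} (or more precisely its underlying Hille--Yosida argument via Cazenave, Proposition 4.1.6) produces a unique classical solution
\begin{equation*}
u\in C([0,T];D(A))\cap C^1([0,T];L^2(\Omega))=Y_0.
\end{equation*}

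\textbf{Step 2: the $v$-equation.} Set $g:=F^1+1_{\mathcal{O}}u$. Since $u\in C([0,T];L^2(\Omega))$ and $u_t\in C([0,T];L^2(\Omega))$, the multiplication by the bounded function $1_{\mathcal{O}}$ gives $1_{\mathcal{O}}u\in C([0,T];L^2(\Omega))\hookrightarrow C([0,T];H^{-2}(\Omega))$ with $(1_{\mathcal{O}}u)_t=1_{\mathcal{O}}u_t\in C([0,T];L^2(\Omega))\hookrightarrow L^1(0,T;H^{-2}(\Omega))$. Combining with the hypotheses on $F^1$, we obtain
\begin{equation*}
g\in C([0,T];H^{-2}(\Omega))\cap W^{1,1}(0,T;H^{-2}(\Omega)).
\end{equation*}
Next, because $A$ is self-adjoint and m-dissipative on $L^2(\Omega)$, a standard duality argument shows that its (isometric) extension $A_1:H^2_0(\Omega)\to H^{-2}(\Omega)$ is m-dissipative on $H^{-2}(\Omega)$ with dense domain $D(A_1)=H^2_0(\Omega)$, and hence generates a $C_0$-semigroup of contractions on $H^{-2}(\Omega)$. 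Reversing time by $w(s,x):=v(T-s,x)$ converts the backward problem $v|_{t=T}=0$ into a forward one on $[0,T]$ with trivial initial datum $w(0)=0\in D(A_1)$ and forcing $\tilde g(s,x):=g(T-s,x)$ of the same regularity. Applying the Hille--Yosida--Phillips theorem in $H^{-2}(\Omega)$ yields a unique $w\in C([0,T];H^2_0(\Omega))\cap C^1([0,T];H^{-2}(\Omega))$, and undoing the time reversal gives $v\in Y_1$.

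\textbf{Uniqueness and main obstacle.} Uniqueness follows from the linearity: the difference of two solutions satisfies \eqref{sistemalinearexistencia} with $u_0=0$, $F^0=F^1=0$, and the uniqueness parts of Steps 1 and 2 force both components to vanish. The main technical point is the duality step needed to transfer the $L^2$-semigroup generated by $A$ to an $H^{-2}$-semigroup with domain $H^2_0(\Omega)$; once this is in place, everything else is a routine application of the Hille--Yosida machinery already invoked in Proposition \ref{pro_linear}. A secondary point worth checking carefully is that $1_{\mathcal{O}}u$, despite the jump of the characteristic function, indeed has the $W^{1,1}(0,T;H^{-2}(\Omega))$ regularity claimed above; this works because we measure the time derivative in the distributional space $H^{-2}(\Omega)$ (not in $H^2_0(\Omega)$), so the spatial irregularity of $1_{\mathcal{O}}$ is harmless.
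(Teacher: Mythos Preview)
Your proposal is correct and follows essentially the same route as the paper: solve the decoupled $u$-equation via the semigroup generated by $A$ on $L^2(\Omega)$, check that $1_{\mathcal{O}}u$ has the required $C([0,T];H^{-2})\cap W^{1,1}(0,T;H^{-2})$ regularity, and then solve the $v$-equation via the semigroup generated by $A_1$ on $H^{-2}(\Omega)$, in both cases invoking \cite[Proposition~4.1.6]{Ca}. The only cosmetic difference is that the paper writes the backward mild-solution formula $v(t)=\int_t^T\mathcal{S}_1(s-t)(F^1+1_{\mathcal{O}}u)(s)\,ds$ directly (having defined $A_1=-A$ on the dual scale), whereas you perform an explicit time reversal; your verification of the time regularity of $1_{\mathcal{O}}u$ is in fact slightly more explicit than the paper's.
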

\begin{proof}
Note that, thanks to \cite[Proposition 4.1.6]{Ca}, we get that the mild solution 
$$
u(t)=\mathcal{S}_{0}(t) u_{0}+\int_{0}^{t} \mathcal{S}_{0}(t-s) F^{0}(s) d s\in Y_{0}
$$
verifies
\begin{equation}\label{ut_1}
\begin{cases}
i  u_t  +  u_{xx} - u_{xxxx}  = F^0,\\
\left.u\right|_{t=0}=u_{0}.
\end{cases}
\end{equation}
Now, it is not difficult to see that
\[
\begin{aligned}
\int_{0}^{T}\left\|\left(u 1_{\mathcal{O}}\right)(s)\right\|_{H^{-2}(\Omega)}^{2} ds &\leq\sup_{\zeta \in H_{0}^{2}(\Omega),	 \|\zeta\|_{H_{0}^{2}(\Omega)}=1}\int_{0}^{T} \int_{\mathcal{O}} |u| \cdot \varsigma dx  d t  \leq \iint_{Q_{T}}\left|u\right|^{2} dx dt,
\end{aligned}
\]
and hence $u 1_{\mathcal{O}} \in C\left([0, T], H^{-2}(\Omega)\right) \cap W^{1,1}\left(0, T ; H^{-2}(\Omega)\right) .$ Then, applying again \cite[Proposition 4.1.6]{Ca}, and we get that the mild solution 
$$v(t)=\int_{t}^{T} \mathcal{S}_{1}(s-t)\left(F^{1}+1_{\mathcal{O}}u\right)(s) ds \in Y_{1}$$
 satisfies
\begin{equation}\label{vt_1}
\begin{cases}
i v_t  + v_{xx} - v_{xxxx} = F^1 +1_{\mathcal{O}}u,  \\
\left.v\right|_{t=T}=0.
\end{cases}
\end{equation}
Thus, Theorem \ref{regular} is achieved putting together $(u,v)$ satisfying \eqref{ut_1} and \eqref{vt_1}.
\end{proof}

\subsection{Transposition solutions} In what follows, we will talk about transposition solutions that are of particular interest for the purposes of this paper.  

\begin{definition}
Let $u_0\in L^2(\Omega)$ and $(F^0,F^1)\in[L^2(0,T;H^{-2}(\Omega))]^2$. We say that a pair $$(u,v)\in L^2(0,T;H^2_0(\Omega))\times L^2(Q_T)$$ is a solution in the transposition sense of \eqref{sistemalinearexistencia}, if it satisfies
\begin{equation}\label{transposition}
\begin{split}
&\int_{0}^{T}\left\langle g^{0}, u\right\rangle_{H^{-2} H_{0}^{2}}dt=\operatorname{Re} \int_{\Omega} \varphi(0) \overline{u_{0}} dx+\int_{0}^{T}\left\langle F^{0}, \varphi\right\rangle_{H^{-2} H_{0}^{2}}  dt \\
&\operatorname{Re} \iint_{Q_{T}} g^{1} \bar{v} dx dt=\operatorname{Re} \int_{0}^{T} \int_{\mathcal{O}}  u \cdot \overline{\psi} dx dt+\int_{0}^{T}\left\langle F^{1}, \psi\right\rangle_{H^{-2} H_{0}^{2}} dt
\end{split}
\end{equation}
for every $\left(g^{0}, g^{1}\right) \in L^{2}\left(0, T ; H^{-2}(\Omega)\right) \times L^{2}\left(Q_{T}\right)$, where $\left\langle \cdot,\cdot\right\rangle$ denotes the duality between $H^{-2}(\Omega$ and $H_{0}^{2}(\Omega)$,  and $(\varphi, \psi)$ is the solution of
\begin{equation}\label{adjuntoinversa_a}
	\left\{\begin{array}{lll}
      i \varphi_t  +  \varphi_{xx} - \varphi_{xxxx} = g^0,  &\text{in}& Q_T,\\
      i \psi_t  +  \psi_{xx}  - \psi_{xxxx} = g^1,   &\text{in}& Q_T,\\
      \varphi(t,0) = \varphi(t,L) =  \varphi_x(t,0) = \varphi_x(t,L) = 0, &\text{on}& t \in (0,T),\\  \psi(t,0) = \psi(t,L) = \psi_x(t,0) = \psi_x(t,L) = 0, &\text{on}& t \in (0,T), \\
      \varphi(T,x)=0, \psi(0,x) = 0, &\text{in}&\Omega.
	\end{array} \right.
	\end{equation}
	\end{definition}
	We have the following result about the existence and uniqueness of transposition
solutions.
\begin{theorem}For $u_{0} \in L^{2}(\Omega)$ and $\left(F^{0}, F^{1}\right) \in\left[L^{2}(0, T;H^{-2}(\Omega))\right]^{2}$, there exists a unique $(u, v) \in\left[L^{2}\left(0, T ; H_{0}^{2}(\Omega)\right)\right]^{2}$ satisfying \eqref{transposition} for every $\left(g^{0}, g^{1}\right) \in L^{2}\left(0, T ; H^{-2}(\Omega)\right) \times L^{2}\left(Q_{T}\right)$, where $(\varphi, \psi)$ is solution of \eqref{adjuntoinversa_a}.
\end{theorem}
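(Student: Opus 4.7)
My plan is to establish existence and uniqueness by a two-step duality (transposition) argument, exploiting the fact that the adjoint system \eqref{adjuntoinversa_a} is decoupled so that $\varphi$ and $\psi$ can be analyzed independently via the unitary semigroup generated by the fourth-order Schrödinger operator. The first transposition identity determines $u$ alone (it does not involve $v$), while the second identity, with $u$ already fixed, determines $v$. Thus the existence proof reduces to applying the Riesz representation theorem twice, after establishing suitable continuity of the right-hand sides with respect to the test data $(g^0,g^1)$.

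The concrete plan is as follows. First, for $(g^0,g^1)\in L^2(0,T;H^{-2}(\Omega))\times L^2(Q_T)$, I will solve \eqref{adjuntoinversa_a} backward in time using the results of the Appendix (Proposition \ref{pro_linear} together with density), obtaining $\psi\in C([0,T];L^2(\Omega))\cap L^2(0,T;H^2_0(\Omega))$ with $\|\psi\|_{L^\infty(L^2)\cap L^2(H^2_0)}\le C\|g^1\|_{L^2(Q_T)}$, and, by the dual estimate for the skew-adjoint generator $A$, $\varphi\in C([0,T];L^2(\Omega))\cap L^2(0,T;H^2_0(\Omega))$ with
\begin{equation*}
\|\varphi\|_{L^\infty(L^2)}+\|\varphi\|_{L^2(0,T;H^2_0(\Omega))}\;\le\; C\,\|g^0\|_{L^2(0,T;H^{-2}(\Omega))}.
\end{equation*}
With these estimates, the map
\begin{equation*}
\Lambda_1(g^0):=\operatorname{Re}\!\int_\Omega \varphi(0)\,\overline{u_0}\,dx+\int_0^T\langle F^0,\varphi\rangle_{H^{-2},H^2_0}\,dt
\end{equation*}
is a continuous antilinear functional on $L^2(0,T;H^{-2}(\Omega))$ (the first term uses $u_0\in L^2(\Omega)$ and the $L^\infty(L^2)$ bound on $\varphi$; the second uses $F^0\in L^2(H^{-2})$ and the $L^2(H^2_0)$ bound). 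Riesz representation in the Hilbert space $L^2(0,T;H^{-2}(\Omega))$, whose dual identifies with $L^2(0,T;H^2_0(\Omega))$, then yields a unique $u\in L^2(0,T;H^2_0(\Omega))$ realizing the first identity in \eqref{transposition}. Once $u$ is in hand, the localized term $1_{\mathcal O}u\in L^2(Q_T)$, so the functional
\begin{equation*}
\Lambda_2(g^1):=\operatorname{Re}\!\int_0^T\!\!\int_{\mathcal O} u\,\overline{\psi}\,dx\,dt+\int_0^T\langle F^1,\psi\rangle_{H^{-2},H^2_0}\,dt
\end{equation*}
is continuous on $L^2(Q_T)$, and a second application of Riesz produces the unique $v\in L^2(Q_T)$ satisfying the second identity.

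Uniqueness for the pair $(u,v)$ will be immediate: the transposition identities, together with the surjectivity of the adjoint solution map onto a dense subspace of the test-data space, force the difference of two solutions to annihilate a dense set and hence vanish. The main obstacle, and the only nontrivial analytic input, is the a priori estimate $\|\varphi\|_{L^2(0,T;H^2_0)}\le C\|g^0\|_{L^2(0,T;H^{-2})}$ for the backward adjoint problem with right-hand side in the negative-order space $L^2(H^{-2})$; this is a \emph{hidden regularity} statement for the conservative operator $\mathcal{L}^\ast=i\partial_t+\partial_{xx}-\partial_{xxxx}$. I will derive it by duality: the forward problem with $L^2(H^2_0)$ data is well-posed in $L^{\infty}(H^2_0)$ by semigroup theory, and transposing this bound (using that $-iA$ is self-adjoint with form domain $H^2_0$) gives the required estimate for the backward problem, after a standard density argument reducing to the case of smooth data handled by Theorem \ref{regular}. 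The remainder of the proof is routine.
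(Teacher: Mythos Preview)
Your overall strategy---two successive applications of the Riesz representation theorem, first for $u$ and then for $v$, exploiting that the adjoint system \eqref{adjuntoinversa_a} is decoupled---is exactly the paper's approach. There are, however, two concrete gaps.

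First, your duality justification for the key estimate
\[
\|\varphi\|_{L^\infty(0,T;L^2)}+\|\varphi\|_{L^2(0,T;H^2_0)}\le C\,\|g^0\|_{L^2(0,T;H^{-2})}
\]
does not work as written. If the forward solution map $S:f\mapsto u$ is bounded $L^2(0,T;H^2_0)\to L^\infty(0,T;H^2_0)$, transposition produces a bound for $S^*$ from $(L^\infty(0,T;H^2_0))^*$ into $L^2(0,T;H^{-2})$, which is neither the direction nor the target space you need. More concretely, the duality identity $\iint f\,\overline{\varphi}=\iint u\,\overline{g^0}$ together with $\|u\|_{L^\infty(H^2_0)}\le C\|f\|_{L^1(H^2_0)}$ only yields $\|\varphi\|_{L^\infty(H^{-2})}\le C\|g^0\|_{L^1(H^{-2})}$. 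Since $e^{tA}$ is a \emph{unitary} group on every scale $D((iA)^{s})$, no abstract semigroup argument can manufacture a gain of four derivatives from $H^{-2}$ data to an $H^2_0$ solution. The paper does not argue by duality here either: it parametrizes the test data as $g^0=\partial_x^4 h^0$ with $h^0\in L^2(0,T;H^2_0)$ and appeals to ``energy estimates'' for the continuity of the resulting linear form. You should replace the abstract duality sketch by a direct multiplier/energy computation in that spirit.

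Second, your Riesz step for $v$ lands only in $L^2(Q_T)$, whereas the theorem asserts $v\in L^2(0,T;H^2_0(\Omega))$. The paper closes this gap by an approximation argument: take regular data $(u_0^n,F^0_n,F^1_n)$ converging to $(u_0,F^0,F^1)$, apply Theorem~\ref{regular} to obtain regular solutions $(u_n,v_n)\in Y_0\times Y_1$, check that regular solutions are also transposition solutions, obtain a uniform bound for $(u_n,v_n)$ in $[L^2(0,T;H^2_0(\Omega))]^2$, and pass to the limit. This regularity upgrade for $v$ is entirely missing from your plan.
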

\begin{proof}Let $\Psi_1:L^2(0,T;H^2_0(\Omega))\to\mathbb{R}$ the operator defined by
$$\Psi_1(h^0)=\operatorname{Re}\int_{\Omega} \varphi(0) \overline{u_{0}} dx+\int_{0}^{T}\left\langle F^{0}, \varphi\right\rangle_{H^{-2} H_{0}^{2}}  dt ,$$ where $\varphi$ satisfies the first equation of \eqref{adjuntoinversa_a} for $g^0:=h^0_{xxxx}\in L^2(0,T;H^{-2}(\Omega))$. From the energy estimates, it is easy to see the continuity of $\Psi_1$. Then, thanks to the Lax-Milgram theorem, there exists $u\in L^2(0,T;H^2_0(\Omega))$ such that 
$$\int_{0}^{T}\left\langle g^{0}, u\right\rangle_{H^{-2} H_{0}^{2}}dt=\operatorname{Re}\iint_{Q_T}h^0_{xx}\overline{u}_{xx}dxdt=\Psi_1(h^0),$$
for every $g^0\in H^{-2}(\Omega)$, with $g^0=h^0_{xxxx}$.  Analogously, we have the existence of $v\in L^2(Q_T)$ satisfying the second equation of \eqref{transposition}, since the linear form 
$$\Psi_1(g^1)=\operatorname{Re} \int_{0}^{T} \int_{\mathcal{O}}  u \cdot \overline{\psi} dx dt+\int_{0}^{T}\left\langle F^{1}, \psi\right\rangle_{H^{-2} H_{0}^{2}} dt$$
is continuous in $L^2(Q_T)$.

\vspace{0.2cm}
\noindent\textbf{Claim:} We have that $v$ belongs to $L^2(0,T;H^2_0(\Omega))$.
\vspace{0.1cm}

Indeed,  first, we take sequences of regular data such that $u_{0}^{n} \rightarrow u_{0}$ in $L^{2}(\Omega)$ and $\left(F_{n}^{0}, F_{n}^{1}\right) \rightarrow\left(F^{0}, F^{1}\right)$ in $L^{2}\left(0, T ; H^{-2}(\Omega)\right) \times L^{2}\left(Q_{T}\right)$. We show that the regular solutions $\left(u_{n}, v_{n}\right)$ for \eqref{sistemalinearexistencia} (whose existence is given in Theorem \ref{regular}) with initial data $u_{0}^{n}$ and $\left(F_{n}^{0}, F_{n}^{1}\right)$ on the right-hand side, are also a solution in the transposition sense; moreover, it is bounded in $\left[L^{2}\left(0, T ; H_{0}^{2}(\Omega)\right)\right]^{2}$. Hence, in the limit, we obtain that $(u, v) \in\left[L^{2}\left(0, T ; H_{0}^{2}(\Omega)\right)\right]^{2}$.

\vspace{0.1cm}
Finally,  for uniqueness,  suppose that $(\hat{u},\hat{v})$ is another solution of \eqref{sistemalinearexistencia}.  Thus, 
$$\operatorname{Re} \int_{0}^{T}\left\langle g^{0}, u-\hat{u}\right\rangle_{H^{-2} H_{0}^{2}}dt=0\quad \text{and} \quad \operatorname{Re} \iint_{Q T} g^{1}(v-\hat{v}) dx dt=\operatorname{Re} \int_{0}^{T} \int_{\mathcal{O}}(u-\hat{u})\cdot \overline{\psi}dxdt, $$
for all $g^0\in L^2(0,T;H^{-2}(\Omega))$ and $g^{1} \in L^{2}\left(Q_{T}\right).$
Hence, $y=\hat{y}$ and $z=\hat{z}$.
\end{proof}

%%%%%%%%%%%%%%%%%%%%%%%%%%%%%%%%%%
%%%% THE REFERENCES
%%%%%%%%%%%%%%%%%%%%%%%%%%%%%%%%%%

\end{document}